\pgfplotsset{compat=1.14}
\newcommand{\jump}[1]{\llbracket #1 \rrbracket}
\newcommand{\avg}[1]{\lbrace\!\!\lbrace #1 \rbrace\!\!\rbrace}
\newcommand{\dgnorm}[1]{\lVert #1 \rVert_\mathrm{DG}}
\newcommand{\fullnorm}[2][]{\left\vert\kern-0.25ex\left\vert\kern-0.25ex\left\vert #2 
      \right\vert\kern-0.25ex\right\vert\kern-0.25ex\right\vert_{\mathrm{DG} #1}}
\newcommand{\fullnormfixedsize}[2][]{\vert\kern-0.25ex\vert\kern-0.25ex\vert #2 
      \vert\kern-0.25ex\vert\kern-0.25ex\vert_{\mathrm{DG} #1}}
\newtheorem{theorem}{Theorem}
\newtheorem{lemma}{Lemma}
\newtheorem{remark}{Remark}
\newtheorem{ind-assumption}{Induction Assumption}
\newtheorem{proposition}[lemma]{Proposition}
\let\epsilon\varepsilon
\let\phi\varphi
\let\theta\vartheta
\let\myempty\varnothing
\def\hp{$hp$}
\newcommand{\de}[1]{#1_\delta}
\newcommand{\spacenameinnorm}{\mathcal{D}}
\newcommand{\spacenameinnormhom}{\mathcal{G}}
\newcommand{\wnormdg}[5]{\left\vert\kern-0.25ex\left\vert\kern-0.25ex\left\vert #1 
      \right\vert\kern-0.25ex\right\vert\kern-0.25ex\right\vert_{\spacenameinnorm^{#3}_{#4}(#5)} }
\newcommand{\wnormdghom}[5]{\left\vert\kern-0.25ex\left\vert\kern-0.25ex\left\vert #1 
      \right\vert\kern-0.25ex\right\vert\kern-0.25ex\right\vert_{\spacenameinnormhom^{#3}_{#4}(#5)} }
\newcommand{\dhe}{\mathtt{h_e}}
\newcommand{\dpe}{\mathtt{p_e}}
\newcommand{\slope}{\mathfrak{s}}
\newcommand{\psiwd}{{{\psi}_{w_\delta}}}
\newcommand{\udso}{{{U}_{\delta}^{\star\perp}}}
\newcommand{\Ad}{A_{\delta}}
\newcommand{\ud}{{u_\delta}}
\newcommand{\hK}{{\hat{K}}}
\newcommand{\fc}{{\mathfrak{c}}}
\newcommand{\fC}{{\mathfrak{C}}}
\newcommand{\tv}{{\tilde{v}}}
\newcommand{\cK}{\mathcal{K}}
\newcommand{\cJ}{\mathcal{J}}
\newcommand{\dalpha}{\partial^\alpha}
\newcommand{\dzeta}{\partial^\zeta}
\newcommand{\dOmega}{{\partial\Omega}}
\newcommand{\dbeta}{\partial^\beta}
\newcommand{\dab}{\partial^{\alpha+\beta}}
\newcommand{\betam}{{|\beta|}}
\newcommand{\zetam}{{|\zeta|}}
\newcommand{\xim}{{|\xi|}}
\newcommand{\alpham}{{|\alpha|}}
\newcommand{\hB}{\widehat{B}}
\newcommand{\hr}{\hat{r}}
\newcommand{\Ctil}{{\widetilde{C}}}
\newcommand{\Atil}{{\widetilde{A}}}
\newcommand{\Ahat}{{\widehat{A}}}
\newcommand{\tgamma}{{\tilde{\gamma}}}
\newcommand{\vd}{{v_\delta}}
\newcommand{\wdel}{{w_\delta}}
\newcommand{\lds}{{\lambda_\delta^*}}
\newcommand{\ld}{{\lambda_\delta}}
\newcommand{\uds}{{u_\delta^*}}
\newcommand{\Xd}{{X_\delta}}
\newcommand{\Creg}{C_{\mathrm{reg}}}
\newcommand{\Cinterp}{C_{\mathrm{interp}}}
\newcommand{\Cinterpone}{C_{\mathrm{interp}, 1}}
\newcommand{\Cinterptwo}{C_{\mathrm{interp}, 2}}
\newcommand{\Cinterpthree}{C_{\mathrm{interp}, 3}}
\newcommand{\CSinterp}{C_{W, \mathrm{interp}}^S}
\newcommand{\CBinterp}{C_{W, \mathrm{interp}}^{\widehat{B}}}
\newcommand{\Ccomm}{C_{\mathrm{com}}}
\newcommand{\pstar}{{p_\star}}
\newcommand{\gammastar}{\gamma_\star}
\newcommand{\Rstar}{{R_\star}}
\newcommand{\Wper}{W_{\mathrm{per}}}
\newcommand{\Wloc}{W_{\mathrm{loc}}}
\newcommand{\Hper}{H_{\mathrm{per}}}
\newcommand{\Lper}{L_{\mathrm{per}}}
\newcommand{\Layer}{\mathcal{T}}
\newcommand{\R}{\mathbb{R}}
\newcommand{\Z}{\mathbb{Z}}
\renewcommand{\emptyset}{\varnothing}
\newcommand{\refcite}[1]{\cite{#1}}
\author{Yvon Maday$^{\star}$}
\address[$^\star$]{Sorbonne Université, CNRS, Université de Paris, Laboratoire Jacques-Louis Lions (LJLL), F-75005 Paris, France \and
  Institut Universitaire de France}
\email{yvon.maday@sorbonne-universite.fr}
\author{Carlo Marcati$^\diamondsuit$}
\address[$^\diamondsuit$]{Dipartimento di Matematica, Universit\`a di Pavia, I-27100 Pavia, Italy}
\email{carlo.marcati@unipv.it}
\thanks{Yvon Maday acknowledges funding from the European Research Council (ERC) under the European Union's Horizon 2020 research and innovation programme (grant agreement No 810367).}
\title[Analyticity and $hp$ dG approximation of nonlinear Schrödinger
eigenproblems]{Analyticity and $hp$ discontinuous
  Galerkin approximation of nonlinear Schrödinger
  eigenproblems}
\begin{document}
\begin{abstract}
 We study a class of nonlinear eigenvalue problems of Schr\"{o}dinger type,
 where the potential is singular on a set of  points. Such problems are
 widely present in physics and chemistry, and their analysis is of both
 theoretical and practical interest.
 In particular, we study the regularity of the eigenfunctions of the operators considered, and we propose and
 analyze the approximation of the solution via an isotropically refined $hp$ discontinuous
 Galerkin (dG) method.

 We show that, for weighted analytic potentials and
 for up-to-quartic polynomial nonlinearities, the eigenfunctions belong to analytic-type non
 homogeneous weighted Sobolev spaces. We also prove quasi optimal \emph{a
   priori} estimates on the error of the dG finite element method; when using an isotropically
 refined $hp$ space the numerical solution is shown to converge with
 exponential rate towards the exact eigenfunction.  We conclude with a series of numerical tests to validate the
 theoretical results.
\end{abstract}
\keywords{Nonlinear Schrödinger equation, analytic regularity, exponential convergence, finite element method}
\subjclass{65N25, 35J10, 35P30, 65N12, 65N30}

\maketitle
\tableofcontents
\section{Introduction}
\label{sec:introduction}
This paper concerns the analysis of an elliptic nonlinear eigenvalue
problem and its approximation with an $hp$ discontinuous Galerkin finite element
method.
Specifically, we consider the problem of finding, in a domain $\Omega\subset
\mathbb{R}^d$, $d=2,3$, the smallest eigenvalue and associated eigenfunction $(\lambda, u)$ such that $\|u\|_{L^2(\Omega)}=1$ and
\begin{equation}
  \label{eq:intro-prob}
  (-\Delta  + V + f(u^2))  u = \lambda u,
\end{equation}
for a (singular) potential $V$ and a nonlinearity $f$. We refer to
  Ref.~\refcite{Cances2010} and the references therein for a discussion of problem
  \eqref{eq:intro-prob} and its connections with models in physics.
  In this manuscript, we will
  perform a theoretical analysis of \eqref{eq:intro-prob} with periodic boundary
conditions, and we will investigate the behavior of the numerical scheme with
homogeneous Dirichlet boundary conditions.

 We prove weighed analytic regularity and exponential convergence of
   the $hp$ method for nonlinearities $f(u^2)u  = |u|^{n-1} u$ with $n=2, 3,
     4$, i.e.,  an up-to-quartic polynomial nonlinearity.
   Problems of this kind correspond to the Euler-Lagrange equations
of energy minimization problems and are therefore widely present in physics and
chemistry.
Equations of the form \eqref{eq:intro-prob} are also often referred to as nonlinear
Schr\"odinger equations.
Problem \eqref{eq:intro-prob} also constitutes a model for problems of wide
  interest in quantum chemistry, such as the Hartree-Fock model. The latter is,
  indeed, a nonlinear elliptic eigenvalue problem, with a singular potential,
  and with a nonlinearity that is cubic in nature. The main difference with what
we consider here is that our nonlinearity is local, and that we only consider
the smallest eigenvalue.

Our analysis is centered mainly on potentials that are singular at a set of
isolated points; this includes the electric attraction generated by a
Coulomb potential, i.e., $V(x) = 1/d(x,\fc)$, where $d(\cdot, \cdot)$ is
  the euclidean distance between two points in $\mathbb{R}^d$, for some fixed point $\fc\in\Omega$, but
applies more generally to any potential that, in the vicinity of the singular
point, behaves as
\begin{equation}
  \label{eq:intro-V}
  V(x) \sim \frac{1}{d(x,\fc)^{\xi}},
\end{equation}
for a $\xi < 2$. Clearly, $V$ is not very regular in classical Sobolev spaces, thus we
cannot expect the solution to be regular in those spaces either. Nonetheless, we
can alternatively work in weighted Kondrat'ev-Babu\v{s}ka spaces, prove that the solution is
sufficiently regular in these spaces, and thus design an appropriate \hp{}
discretization that converges exponentially to the exact solution, see
Ref.~\refcite{Schotzau2013b}.

The nonlinear Schrödinger equation \eqref{eq:intro-prob} and the weighted spaces are
introduced in detail in Section \ref{sec:statement}. There, we also introduce our basic
assumptions on the nonlinearity, which are similar to those introduced in
Ref.~\refcite{Cances2010}, and on the potential $V$. As the analysis progresses, we will
introduce more restrictive hypotheses.

In Section \ref{sec:apriori}, we then prove \emph{a priori} convergence estimates on the
eigenvalue and eigenfunction. We consider a wider class of nonlinearities
  than polynomial ones,  as detailed in equations \eqref{eq:cond1} to \eqref{eq:cond4}.
Even though our focus is on \hp{} methods, the
abstract convergence proofs in Section 3 do not exploit specific features of
\hp{} methods, and cover also other discretizations. Suppose we consider a simpler $h$-type finite element
method: the proof of Theorem \ref{theorem:convergence1} --- i.e., convergence and
quasi optimality of the numerical solution --- holds, since we do not use
any specific feature of \hp{} refinement. The proof of convergence of the
discontinuous Galerkin method for a nonlinear eigenvalue problem of the form
\eqref{eq:intro-prob} is a new result as far as we are aware. Previous results
include the convergence of the discontinuous Galerkin method for linear
eigenproblems \cite{Antonietti2006} and the convergence of conforming methods
for the nonlinear problem \cite{Cances2010}.
The main difference with the latter paper is that the
discontinuous Galerkin method is not conforming, thus some relations between
exact and numerical quantities, e.g., between the exact eigenvalue $\lambda$ and
the numerical one $\ld$, are less straightforward.
In general, the convergence
  and quasi optimality of the numerical eigenvalue--eigenfunction pair proven in
Theorem \ref{theorem:convergence1} should be readily extendable to any nonconforming symmetric
method such that the thesis of Lemma \ref{lemma:estimates}, akin to coercivity
and continuity of the numerical bilinear form, holds.

In Section \ref{chap:nonlinear-regularity}, we restrict the
analysis to the case of polynomial, up-to-quartic polynomial nonlinearities. In this
setting, the solutions of problem \eqref{eq:intro-prob} are analytic in weighted
Sobolev spaces: specifically, if the potential is of type \eqref{eq:intro-V}
and $f(u^2) = |u|^{n-1} $, $n= 2, 3, 4$, then for $\gamma< d/2 +2
-\xi$ there exist constants
$C_u$ and $A_u$ such that for all $k\in \mathbb{N}$
\begin{equation*}
  \sum_{\alpham = k}\| d(x, \fc)^{k - \gamma} \dalpha u \|_{L^2(\Omega)} \leq C_u A_u^{k}k!,
\end{equation*}
where $u$ is the ground state of \eqref{eq:intro-prob} and $\alpha \in
\mathbb{N}_0^d$ is a multi index. This weighted, analytic regularity estimate is proven in Theorem
\ref{theorem:analytic} and constitutes a novel result of independent interest.
For previous weighted analytic
regularity results for elliptic problems, we
refer, among others, to Refs.~\refcite{Guo2006b,Costabel2012a}.

As a consequence of the quasi-optimal \emph{a priori} estimates introduced
above and of the weighted analytic regularity of the ground state for
up-to-quartic nonlinearities, we obtain exponential convergence of the numerical
solution computed with the $hp$ dG method. This is briefly discussed in Section
\ref{sec:exp-convergence} and presented in Theorem \ref{theorem:eigenvalue-exponential}.

Finally, in Section \ref{sec:numerical}, we investigate the
performance of the scheme in two and three dimensional numerical tests. We
confirm our theoretical estimates, while also showing the effect of sources of
numerical error that have not been taken into consideration in the theoretical analysis.

\section{Statement of the problem and notation}
\label{sec:statement}
\subsection{Functional setting and notation}
Let $\Omega = \mathbb{R}^d/(L\mathbb{Z}) ^d$ be the flat torus of period 
$0 < L\leq 1$ and let $S= (0, L)^d$.
We use the standard notation for Sobolev spaces, that we indicate, for any
  non periodic set
  $D\subset \mathbb{R}^d$, as
$W^{k,p}(D)$, with $W^{k,2}(D) = H^k(D)$ and $W^{0,p}(D) =
L^p(D)$. We write $v\in \Wloc^{k, p}(\R^d)$ if, for all bounded $D\subset
  \R^d$, $v\in W^{k, p}(D)$. Furthermore, we indicate periodized Sobolev spaces on $\Omega$
  by
  \begin{equation*}
    W^{k, p}(\Omega) = \Wper^{k, p}(S)  = \left\{ v_{|_{S}} : v\in \Wloc^{k, p}(\R^d) \text{ and } v_{|_{S+i L}} = v_{|_S}\text{ for all }i\in \Z^d \right\},
  \end{equation*}
with 
  $H^k(\Omega) = \Hper^k(S) = W^{k, 2}(\Omega)$ and $L^p(\Omega) =
    \Lper^p(S) = W^{0,p}(\Omega)$.
We denote the scalar product in $L^2(D)$, for $D\subset\R^d$, as
  $(\cdot, \cdot)_{D}$, the scalar product in
$\Lper^2(\Omega)$ as $(\cdot, \cdot)$, and the respective norm
as $\lVert u\rVert = (u,u)^{1/2}$.  Furthermore, we denote by
  $\mathbb{N}$ the set of strictly positive natural numbers, and by
  $\mathbb{N}_0 = \mathbb{N}\cup\{0\}$.
For two quantities $a$ and $b$, we write $a\lesssim
b$ (respectively $a\gtrsim b$) if there exists $C>0$ independent of the
discretization, such that $a\leq C b$ (resp. $a\geq Cb $). We write $a\simeq b$
if $a\lesssim b$ and $a\gtrsim b$.

We now recall the definition of the weighted Sobolev spaces, introduced in Ref.~\refcite{Kondratev1967}, that will be central to our regularity analysis.
Given a set of isolated points ${\mathfrak{C}} \subset S$, we
write $\widetilde{\mathfrak{C}} = \{\mathfrak{C} + kL, \, k\in \Z^d\}$.
we introduce the homogeneous Kondrat'ev-Babu\v{s}ka space
$\mathcal{K}^{k,p}_\gamma (\Omega, \mathfrak{C})$, defined as \begin{multline*}
  \mathcal{K}^{k,p}_\gamma (\Omega,\mathfrak{C})=\lbrace u_{|_S} : u \in
    \Wloc^{k, p}(\R^d\setminus \widetilde{\mathfrak{C}}), \, u_{|_S} = u_{|S+iL}
    \, \forall i\in \Z^d,\\
    \text{ and } r^{|\alpha|-\gamma}(\partial^\alpha u)_{|_S} \in L^p(S)\; \forall \alpha \in \mathbb{N}_0^d : |\alpha| \leq  k\rbrace,
\end{multline*}%
where $r= r(x)$ is any smooth function defined on $S$ which is, in the vicinity of every point
$\fc\in \mathfrak{C}$, equal to the euclidean distance $d(x,\fc)$ from the
point and nonzero elsewhere. For example,
  \begin{equation*}
    r(x) = \prod_{\fc\in\fC} d(x, \fc), \qquad \forall x \in S.
  \end{equation*}The
nonhomogeneous Kondrat'ev-Babu\v{s}ka space is defined by \begin{multline*}
\mathcal{J}^{k,p}_\gamma (\Omega,\mathfrak{C})=\lbrace u_{|_S}: u \in \Wloc^{\lfloor
  \gamma-d/p \rfloor, p}(\R^d) \cap \Wloc^{k,p}(\R^d\setminus \widetilde{\fC}), \, u_{|_S} = u_{|_{S+iL}} \,\forall i\in \Z^d\\
\text{and }r^{\alpham-\gamma}(\partial^\alpha u )_{|_S}\in L^p(S)\; \forall \alpha \in \mathbb{N}^d : \lfloor \gamma-d/p \rfloor +1 \leq |\alpha| \leq  k\rbrace,
\end{multline*}
for $\gamma>d/p$.
We define the associated seminorm as
\begin{equation*}
|u|_{\mathcal{J}^{k,p}_{\gamma}(\Omega)}
=|u|_{\mathcal{K}^{k,p}_{\gamma}(\Omega)} =\sum_{\alpham = k} \|
r^{\alpham-\gamma}\partial^{\alpha}u\|_{L^p(S)} .
\end{equation*}
We also introduce the
spaces of regular functions with weighted analytic type estimates as
\begin{equation*}
\mathcal{K}^{\varpi,p}_\gamma (\Omega,\mathfrak{C})=\{ v \in \mathcal{K}^{\infty,p}_{\gamma}(\Omega,\mathfrak{C}): |v |_{\mathcal{K}^{k,p}_\gamma(\Omega)} \leq CA^{k}k! \;\forall k\in \mathbb{N}_0\},
\end{equation*}
and 
\begin{equation*}
\mathcal{J}^{\varpi,p}_\gamma (\Omega,\mathfrak{C})=\{ v \in \mathcal{J}^{\infty,p}_{\gamma}(\Omega,\mathfrak{C}): |v |_{\mathcal{K}^{k,p}_\gamma(\Omega)} \leq CA^{k}k!, \;\forall k \in \mathbb{N}_0:k\geq \lfloor \gamma-d/p \rfloor +1\},
\end{equation*}
where $\mathcal{K}^{\infty,p}_{\gamma} (\Omega)= \bigcap_k \mathcal{K}^{k,p}_{ \gamma}(\Omega)$,
$\mathcal{J}^{\infty,p}_\gamma(\Omega)$ defined similarly. To simplify the
notation, we will suppose that there is only one singular point per cell
$S$, i.e.,
${\mathfrak{C}}=\{\fc\}$ and omit $\mathfrak{C}$ from the notation of the spaces.
Furthermore, we write $\mathcal{K}^{k}_\gamma (\Omega)=
\mathcal{K}^{k,2}_\gamma(\Omega)$, $\mathcal{J}^{k}_\gamma (\Omega)=
\mathcal{J}^{k,2}_\gamma(\Omega)$, $\mathcal{K}^{\varpi}_\gamma (\Omega)=
\mathcal{K}^{\varpi,2}_\gamma(\Omega)$, and $\mathcal{J}^{\varpi}_\gamma (\Omega)= \mathcal{J}^{\varpi,2}_\gamma(\Omega)$.
For a thorough treatment of Kondrat'ev-Babu\v{s}ka spaces, see
Refs.~\refcite{Kozlov1997,Costabel2010b,Costabel2010a,Costabel2012a}.
Note that the results obtained in the sequel can be trivially extended to the
case where ${\mathfrak{C}}$ contains more than one point, as long as 
${\mathfrak{C}}$ is a finite set of points.
When, for any non periodic set $D\subset\mathbb{R}^d$, we refer to the
  spaces $\mathcal{J}^{k,p}_\gamma(D)$, we implicitly refer to their non
  periodized version.

Finally, let $X = \mathcal{J}^2_\gamma(\Omega)$, for $\gamma \in (d/2, d/2
+\varepsilon)$, where $0<\varepsilon<1$ will be
specified later, namely in hypothesis \eqref{eq:hyp-potential-2}.

\begin{remark}
    We perform our analysis in the torus $\Omega = \mathbb{R}^d/(L\mathbb{Z})^d$
    to avoid having to deal with the singularities that arise at the edges of
    polyhedral domains. A theory of weighted analytic regularity for nonlinear
    elliptic problems in polyhedral domains is, indeed, not available in the
    literature; the analysis we perform here can be also applied to corner
    singularities of solutions to nonlinear elliptic problems in polyhedral domain, and
    can be therefore seen as a first building block for the analysis of
    nonlinear elliptic problems in polyhedral domains.
  \end{remark}

\subsection{Statement of the problem}
\label{sec:statement-sub}
We introduce the problem under consideration. From the 
``physical'' point of view, it consists in the minimization of an energy
  composed by a kinetic term, an interaction
with a singular potential $V$ and a nonlinear self-interaction term. Under
  the unitary norm constraint, using Euler's equation, the energy minimization problem translates into a nonlinear
elliptic eigenvalue problem. This is the form under which
most of the analysis will be carried out.

We start therefore by introducing the bilinear form over $H^1(\Omega)\times H^1(\Omega)$
\begin{equation}
  \label{eq:bilin-cont}
  a(u,v) = \int_\Omega\nabla u\cdot \nabla v + \int_\Omega V u v
\end{equation}
and a function $F:\mathbb{R}^+\to \mathbb{R}$, whose
properties have been introduced in Section \ref{sec:introduction}.
Let
\begin{equation}
  \label{eq:energy-cont}
  E(v) = \frac{1}{2}a(v,v) + \frac{1}{2} \int_\Omega F(v^2) .
\end{equation}
Let us denote by $u$ the minimizer of \eqref{eq:energy-cont} (unique up to a
sign change under the hypotheses that follow) over the space $\{v \in H^1(\Omega):\lVert v\rVert=1\}$: then, there exists
$\lambda \in \mathbb{R}$ such that $u$ is the solution of
\begin{equation}
  \label{eq:eq-cont}
  \langle A^u u - \lambda u, v \rangle = 0\quad \forall v \in H^1(\Omega)
\end{equation}
where for given $u\in H^1(\Omega)$, $A^u\in \mathcal{L}(H^1(\Omega),
(H^1(\Omega))')$ is defined as
\begin{equation*}
  \langle A^uv,w \rangle= a(v,w) + \int_\Omega f(u^2)vw,
\end{equation*}
with $f=F'$. We introduce also
\begin{equation}
  \label{eq:E''}
  \langle E''(u)v,w \rangle = \langle A^uv,w\rangle + 2 \int_\Omega f'(u^2)u^2vw,\qquad \forall v,w\in H^1(\Omega).
\end{equation}
 The properties of the function $F$ will be similar to those in
 Ref.~\refcite{Cances2010} and have already been introduced above. We recall
 them here:
 \begin{subequations}
   \begin{align}
     \label{eq:cond1} &F\in C^1([0,+\infty),\mathbb{R})\cap C^\infty((0,+\infty),\mathbb{R})\text{ and }F''>0\text{ in }(0,+\infty),\\
     \label{eq:cond2} &\exists q\in[0,2), \exists C\in\mathbb{R} : \forall t\geq 0, |F'(t)| \leq C(1+t^q), \\
     \label{eq:cond3} &F''(t)t \text{ locally bounded in } [0,+\infty), 
 \end{align}
and we suppose that $\forall R>0, \exists C_R\in\mathbb{R}_+: \forall t_1\in(0,R], \forall t_2 \in \mathbb{R}$,
\begin{equation}
  \label{eq:cond4}
\lvert F'(t_2^2)t_2 - F'(t_1^2)t_2 - 2F''(t_1^2)(t_1^2)(t_2-t_1)\rvert \leq C_R(1+|t_2|^s)|t_2-t_1|^r
\end{equation}
\end{subequations}
 for $r\in(1,2]$ and $s\in [0, 5-r)$.
 As an example of nonlinearities satisfying \eqref{eq:cond1} to \eqref{eq:cond4},
   we mention, following Ref. \refcite{Cances2010}, functions modeling
   repulsive interaction in Bose--Einstein condensates ($F(t) \propto t^2$) and
   the Thomas--Fermi kinetic energy functional ($F(t) \propto t^{5/3}$). We refer
 the reader to the discussion in the mentioned reference.
 
Finally, we suppose that the potential $V$ is such that 
\begin{subequations}
  \begin{equation}
  \label{eq:hyp-potential-1}
V\in L^{p_V}(\Omega)
\end{equation}
with $p_V > \max(1, d/2)$ and that there exists $0<\varepsilon < 1$ such that
\begin{equation}
  \label{eq:hyp-potential-2}
  V \in \mathcal{K}^{\varpi,{}\infty}_{-2+\varepsilon}(\Omega, \mathfrak{C}) .
\end{equation}
\end{subequations}
 For $d=2,3$, \eqref{eq:hyp-potential-2}
 implies \eqref{eq:hyp-potential-1} as long as $p_V < d/(2-\epsilon)$. A
 consequence of \eqref{eq:hyp-potential-1} is, in particular, that for $u, v\in H^1(\Omega)$,
\begin{equation*}
  (Vu,v) \leq C \|u\|_{H^1(\Omega)}\|v\|_{H^1(\Omega)}.
\end{equation*}
where the constant $C$ depends on $V$ and on the domain.
\begin{remark}
    As an example of a class of potentials satisfying
    \eqref{eq:hyp-potential-2}, we mention the functions $V:\R^d \to \R$ such that
    \begin{equation*}
      V(x) = \sum_{k\in\Z^d} \frac{1}{\exp(d(x,k)^{\beta}) - 1},
    \end{equation*}
    for $\beta\in(0,2)$.
  \end{remark}

We have also the
following regularity result, which follows from \eqref{eq:cond2} and
\eqref{eq:hyp-potential-2} and the regularity result obtained in Ref.~\refcite{linear}.

\begin{lemma}
  \label{lemma:u-reg}
  The solution $u$ to \eqref{eq:eq-cont}, under hypotheses \eqref{eq:cond1} to \eqref{eq:hyp-potential-2},
 belongs to the space
\begin{equation}
  \label{eq:u-reg}
  u \in \mathcal{J}^2_{d/2+\alpha}(\Omega)
\end{equation}
for any $\alpha<\varepsilon$, with $\varepsilon$ as in \eqref{eq:hyp-potential-2}.
\end{lemma}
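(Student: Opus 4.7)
The plan is to reduce the statement to the linear weighted regularity theory of \cite{linear}. Rewriting \eqref{eq:eq-cont} as the linear Kondrat'ev-type problem
\[
(-\Delta + V)\,u \;=\; g, \qquad g := \lambda u - f(u^2)u,
\]
the task becomes to verify that $g$ lies in a weighted $L^2$ space compatible with the hypotheses of \cite{linear}, namely (roughly) $\mathcal{J}^0_{d/2+\alpha-2}(\Omega)$ for every $\alpha<\varepsilon$. Once this is known, the cited linear regularity result directly yields $u\in \mathcal{J}^2_{d/2+\alpha}(\Omega)$.

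First I would collect the preliminary integrability of $u$. As the minimiser of the energy $E$ on the unit sphere, $u\in H^1(\Omega)$. Combining the Sobolev embedding $H^1\hookrightarrow L^p$ (any $p<\infty$ in $d=2$, any $p\leq 6$ in $d=3$) with the subcritical growth condition \eqref{eq:cond2} (since $q<2$), a standard elliptic bootstrap --- or directly the $L^\infty$ bound established in \cite{Cances2010} --- produces $u\in L^\infty(\Omega)$. Consequently $f(u^2)u$ is uniformly bounded on $\Omega$, and since $\Omega$ is a bounded periodic cube, $L^\infty(\Omega)\hookrightarrow \mathcal{J}^0_\eta(\Omega)$ for every $\eta>-d/2$. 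In particular $g\in \mathcal{J}^0_{d/2+\alpha-2}(\Omega)$ for all $\alpha<\varepsilon$.

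With this in hand, the hypotheses of the linear weighted regularity result in \cite{linear} are in place: the potential $V$ belongs to $\mathcal{K}^{\varpi,\infty}_{-2+\varepsilon}(\Omega)$ by \eqref{eq:hyp-potential-2}, and the datum $g$ to $\mathcal{J}^0_{d/2+\alpha-2}(\Omega)$ by the previous step. Applying that result to $(-\Delta+V)u=g$ gives $u\in \mathcal{J}^2_{d/2+\alpha}(\Omega)$ for every $\alpha<\varepsilon$, which is the claim.

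The main obstacle I expect is the preliminary high-integrability bound on $u$: the combination of the singular $L^{p_V}$ potential in \eqref{eq:hyp-potential-1} with a possibly near-critical nonlinearity ($q$ close to $2$) is not covered by textbook interior $L^\infty$ estimates in $d=3$, and the argument must exploit the minimisation structure --- for instance through a Moser-type iteration as in \cite{Cances2010} --- before the nonlinear term can be treated as a bounded datum. Everything after that step is a direct application of the linear weighted theory.
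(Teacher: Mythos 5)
Your proof is correct and follows essentially the same route as the paper: invoke the $L^\infty$ bound on $u$ from \cite{Cances2010}, observe that the right-hand side $\lambda u - f(u^2)u$ then lies in the target weighted $L^2$ space, and apply the linear isomorphism property of $-\Delta+V$ on $\mathcal{J}^{2}_{d/2+\alpha}\to\mathcal{J}^0_{d/2+\alpha-2}$ from \cite{linear}. One small slip: the embedding $L^\infty(\Omega)\hookrightarrow\mathcal{J}^0_\eta(\Omega)$ holds for $\eta<d/2$ (so that $r^{-\eta}\in L^2$ near the singularity), not $\eta>-d/2$; since here $\eta=d/2+\alpha-2<d/2$ for $\alpha<\varepsilon<1$, the conclusion is unaffected.
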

\begin{proof}
  We have $u\in L^\infty(\Omega)$, see Ref.~\refcite{Cances2010}. Hence, $u$ is the
  solution of $\left( -\Delta + V \right) u = \lambda u + f(u^2)u$,
  with right hand side belonging to the space
  $\mathcal{J}^0_{d/2+\epsilon-2}(\Omega)$. Since the operator $-\Delta +V$ is
  an isomorphism from $\mathcal{J}^2_{d/2+\alpha}(\Omega)$ to
  $\mathcal{J}^0_{d/2+\alpha-2}(\Omega)$ for $0<\alpha< \epsilon$, see Ref.~\refcite{linear}, we obtain the thesis.
\end{proof}

\subsection{Numerical method}
\label{sec:numerical-method}
In this section we introduce the \hp{} discontinuous Galerkin method.
Concerning the design of the \hp{} space, the setting is the one from
Refs.~\refcite{Guo1986a,Guo1986b}.
Let $\mathcal{T}$ be a triangulation of axiparallel quadrilateral ($d=2$) or
  hexahedral ($d=3$) elements of $\Omega$, such that $\bigcup_{K\in
  \mathcal{T}} \overline{K}= \overline{S}$, whose properties will be
specified later. A $d-1$ dimensional face (edge, when $d=2$) is defined as the nonempty interior
of $\partial K_{\sharp}\cap \partial K_{\flat}$ for two adjacent elements
$K_{\flat}$ and $K_\sharp$. Let $\mathcal{E}$ be the set of all faces/edges.
 We denote 
\begin{equation*}
  (u,v)_\mathcal{T} = \sum_{K \in \mathcal{T}} (u, v)_K
\end{equation*}
and, similarly,
\begin{equation*}
  (u,v)_\mathcal{E} = \sum_{e \in \mathcal{E}} (u, v)_e.
\end{equation*}

We suppose that for any $K\in\mathcal{T}$ there exists
an affine transformation $\Phi:K\to\hK$ to the $d$-dimensional cube $\hK$ such
that $\Phi(K) = \hK$, that the mesh is shape and
contact regular.\footnote{If $h_K$ is the diameter of an element
  $K\in\mathcal{T}$ and $\rho_K$ is the radius of the largest ball inscribed in
  $K$, a mesh sequence is \emph{shape regular} if there exists $C$ independent of the
  refinement level such that $h_K\leq C r_K$ for all $K\in \mathcal{T}$. 
  The mesh sequence is \emph{contact regular} if for all
  $K\in\mathcal{T}$, the number of elements adjacent to $K$ is uniformly bounded
and there exists a constant $C$ independent from the refinement level such that
for every face/edge $e$ of $K$, $h_K\leq C h_e$, where $h_e$ is the diameter of $e$.}

We now introduce meshes that are isotropically and geometrically graded around the
points in $\mathfrak{C}$: for simplicity, we consider the case where $\fC
  = \{\fc\}$. Then, we
  fix a refinement ratio $\sigma\in (0, 1/2)$ and, for all $\ell\in
    \mathbb{N}$, we introduce a 
  mesh $\mathcal{T}^{\ell}$ that can be partitioned into disjoint mesh layers $\Layer^\ell_j$, $j=1,
  \dots, \ell$ such that $\mathcal{T}^{\ell} = {\bigcup}_{j=1}^\ell \Layer^\ell_j$.
  For all $K\in \Layer^\ell_j$, we suppose that
\begin{equation*}
  h_K \simeq h_j = \sigma^{j} \qquad d(\fc, K) \simeq h_K,
\end{equation*}
for all $j=1, \dots, \ell$ and with constants uniform in $\mathcal{T}$ and $\ell$.
Finally, we suppose that mesh refinement happens only at the singularity,
  i.e., given
   $ \mathcal{T}^\ell = \Layer^\ell_1 \cup\ldots \cup \Layer^\ell_{\ell}$,
  then
    $\mathcal{T}^{\ell+1} = \Layer^\ell_1 {\cup}\ldots {\cup}\Layer^\ell_{\ell-1}{\cup} \Layer^{\ell+1}_{\ell}{\cup}\Layer^{\ell+1}_{\ell+1}$.
The generalization to the case of $\fC$ containing multiple points follows from
the construction of a graded mesh around each point.

We will allow for $1$-irregular edges/faces, i.e., given two neighboring
elements $K_\flat$ and $K_\sharp$, that share an edge/face $e =
\partial{K}_\sharp\cap \partial{K}_\flat$, we require that $e$ is an entire
edge/face of at least one between $K_\sharp$ and $K_\flat$. We refer to Section
  \ref{sec:numerical} (specifically, to Figure \ref{subfig:2d-mesh}) for a
  visualization of such a mesh.
We introduce on this mesh the \hp{} space with  linear polynomial slope $s$, i.e., for an element $K\in \mathcal{T}^{\ell}$ such that $K\in\Layer^{\ell}_j$,
\begin{equation*}
   p_K \simeq p_j = p_0+\slope(\ell-j),
\end{equation*}
where $h_K$ is the diameter of the element $K$ and $p_K$ is the polynomial order
whose role will be specified in \eqref{eq:discrete-space}. We introduce the
discretization parameter $0 < \delta\leq 1$ such that $\delta \to 0$ when $\ell
\to \infty$, and
the discrete space 
\begin{equation}
  \label{eq:discrete-space}
  \de{X} = \left\{ \de{v} \in L^2(\Omega):(v_{|_K}\circ\Phi^{-1})\in \mathbb{Q}_{p_K}(\hK),\, \forall K\in\mathcal{T}^{\ell}\right\},
\end{equation}
where $\mathbb{Q}_p$ is the space of polynomials of maximal degree $p$ in any
variable and denote
  \begin{equation*}
    X(\delta) = X + \Xd.
  \end{equation*}
Then, $\mathcal{E}^{\ell}$ is the set of the edges (for $d=2$) or faces ($d=3$) of the elements in $\mathcal{T}^{\ell}$ and
\begin{equation*}
  \dhe= \min_{K\in \mathcal{T}^{\ell}: e\cap\partial K \neq \myempty}h_K , \qquad
  \dpe = \max_{K\in \mathcal{T}^{\ell}: e\cap\partial K \neq \myempty}p_K .
\end{equation*}
On an edge/face between two elements $K_\sharp$ and $K_\flat$, i.e., on
$e\subset\partial{K}_\sharp\cap \partial{K}_\flat$, the average $\avg{\cdot}$
and jump $\jump{\cdot}$ operators for a function $w\in X(\delta)$  are defined by
\begin{equation*}
  \avg{w} = \frac{1}{2}\left( w_{|_{K_\sharp}}+w_{|_{K_\flat}}\right), \qquad \jump{w} =   w_{|_{K_\sharp}}\mathbf{n}_\sharp+w_{|_{K_\flat}}\mathbf{n}_\flat,
\end{equation*}
where $\mathbf{n}_\sharp$ (resp. $\mathbf{n}_\flat$) is the outward normal to
the element $K_\sharp$ (resp. $K_\flat$).
If $e$ is an edge/face of an element $K$ and it lies on part of the boundary
  where homogeneous Dirichlet boundary
  conditions are imposed (as it will be the case in Section
  \ref{sec:numerical}),  the expression above is replaced by
\begin{equation*}
  \avg{w} =  w_{|_{K}}, \qquad \jump{w} =   w_{|_{K}}\mathbf{n},
\end{equation*}
where $\mathbf{n}$ is the normal pointing outwards from $K$.

We now introduce the discrete versions of the operators defined in Section \ref{sec:statement-sub}.
First, the bilinear form $a_\delta$ over $X_\delta \times X_\delta$ is given by
\begin{equation}
  \begin{aligned}
  \label{eq:bilin-discont}
  a_\delta (u_\delta,v_\delta) = (\nabla u_\delta, \nabla v_\delta)_{\mathcal{T}^{\ell}} &- (\avg{\nabla u_\delta},\jump{v_\delta})_{\mathcal{E}^{\ell}}- (\avg{\nabla v_\delta},\jump{u_\delta})_{\mathcal{E}^{\ell}} \\ & + \sum_{e \in \mathcal{E}^{\ell}}\alpha_e \frac{\dpe^2}{\dhe} (\jump{u_\delta}, \jump{v_\delta})_{e} + \int_\Omega V u_\delta v_\delta.
\end{aligned} 
\end{equation}
Furthermore,
\begin{equation}
  \label{eq:energy-discont}
  \de{E}(\de{v}) = \frac{1}{2}\de{a}(\de{v},\de{v}) + \frac{1}{2} \int_\Omega F(\de{v}^2).
\end{equation}
Let $u_\delta$ be a minimizer of
\eqref{eq:energy-discont} over $\Xd$, with unitary norm constraint. Then, there exists an eigenvalue $\lambda_\delta\in
\mathbb{R}$ such that
\begin{equation}
  \label{eq:eq-discont}
  \langle A_\delta^{u_\delta} u_\delta - \lambda_\delta u_\delta, v_\delta \rangle = 0\quad \forall v_\delta \in X_\delta
\end{equation}
where 
\begin{equation*}
    \langle A_\delta^{u_\delta} v_\delta , w_\delta \rangle  = a_\delta (v_\delta, w_\delta) + \int_\Omega f(u_\delta^2)v_\delta w_\delta.
\end{equation*}
Finally, $E_\delta''$, defined on $\Xd$, is
obtained by replacing $A^u$ with $\Ad^u$ in \eqref{eq:E''}.

\begin{remark}[Symmetry of the numerical method]
  The dG method with bilinear form \eqref{eq:bilin-discont} is the
  symmetric interior penalty (SIP) method.
  The requirement of symmetry in the bilinear form of the numerical method is a strong
one, and will be used without explicit mention throughout the proofs.

This could be seen as a limitation; nonetheless, from a practical point of view, there is little interest in 
approximating a symmetric eigenvalue problem with a non symmetric numerical
method.
Non symmetric methods tend to exhibit, in the linear case, lower rates of convergence than symmetric
ones \cite{Antonietti2006}.
Furthermore, the solution of the finite dimensional problem
would be more problematic, since algebraic eigenvalue problems are more easily
treated for symmetric matrices \cite{Saad2011}.
\end{remark}
We introduce the mesh dependent norms that will be used in this section. 
First, for a $v\in X(\delta)$,
\begin{equation}
  \label{eq:dgnorm}
  \dgnorm{v}^2 = \sum_{K\in \mathcal{T}^{\ell}} \| v\|_{\mathcal{J}^1_1(K)}^2 + \sum_{e\in\mathcal{E}^{\ell}} \dhe^{-1}{\dpe^2} \|  \jump{v}\|_{L^2(e)}^2 .
\end{equation}
Remark that on $X$, this norm is equivalent to the
$\mathcal{J}^1_1(\Omega)=H^1(\Omega)$ norm, since functions in $X$ have no face
discontinuity, implying $\jump{v} = 0$.
Then, on $X(\delta)$ we introduce, when $d=3$,
\begin{equation}
  \label{eq:fulldgnorm}
  \fullnorm{u}^2 
= \sum_{K\in \mathcal{T}^{\ell}} 
\| u\|_{H^1(K)}^2
+  \sum_{e\in\mathcal{E}^{\ell}} \dpe^{2} \dhe^{-1}\|  \jump{u}\|_{L^2(e)}^2  + \sum_{e\in\mathcal{E}^{\ell}}  \dpe^{-2}\| r^{1/2} \nabla u \cdot n_e \|_{L^2(e)}^2 ,
\end{equation}
where $n_e$ denotes the normal to face $e$.
If $d=2$, we denote by $\mathcal{E}^{\ell}_c$ the set of edges abutting at the
singularity, and write (note that on $\mathcal{E}^{\ell}_c$, $\dpe = p_0$)
\begin{multline}
  \label{eq:fulldgnorm-2d}
  \fullnorm{u}^2 
= \sum_{K\in \mathcal{T}^{\ell}} 
\| u\|_{H^1(K)}^2
+  \sum_{e\in\mathcal{E}^{\ell}} \dpe^{2} \dhe^{-1}\|  \jump{u}\|_{L^2(e)}^2  
+ \sum_{e\in\mathcal{E}^{\ell}\setminus \mathcal{E}^{\ell}_c}  \dpe^{-2}\| r^{1/2} {\nabla
  u}\cdot n_e\|_{L^2(e)}^2
\\ + \sum_{e\in \mathcal{E}^{\ell}_c}  \dhe^{q-1}\| \nabla u\cdot n_e\|_{L^q(e)}^q,
\end{multline}
where $q$ is fixed and such that $1<q<2/(3-\gamma)$, see Remark \ref{remark:normbound}.

For any triangulation $\mathcal{T}$ of $\Omega$, let us also introduce the broken space
\begin{equation*}
\mathcal{J}^{s,p}_\gamma(\Omega, \mathcal{T}) = \left\{ v : v\in\mathcal{J}^{{s,p}}_\gamma(K), \forall K\in \mathcal{T} \right\}.
\end{equation*}
\begin{remark}
  \label{remark:normbound}
When $d=3$, by the definition of the weighted spaces, see Ref. \refcite{Mazya2010}, for
$e \subset\partial K$, and since $\gamma > 3/2$,
\begin{equation*}
  \| r^{1/2}\nabla v \|_{L^2(e)} \leq C\| v \|_{\mathcal{J}^{2}_\gamma(K)},
\end{equation*}
then $\fullnorm{v}$ \eqref{eq:fulldgnorm} is bounded on $\mathcal{J}^2_\gamma(\Omega, \mathcal{T}^{\ell})$. Since furthermore $X(\delta)
\subset \mathcal{J}^{2}_\gamma(\Omega, \mathcal{T}^{\ell})$, $\fullnorm{v}$ as defined
in \eqref{eq:fulldgnorm} is
bounded when $d=3$ for all $v\in X(\delta)$.

When $d=2$, we consider the definition of the norm \eqref{eq:fulldgnorm-2d}. Let
$\gamma > 1$: then, for any $q < 2/(3-\gamma)$ and writing $t =
(1/q-1/2)^{-1}$, there exist $C_1, C_2$ such that, for all $v\in \mathcal{J}^2_\gamma(\Omega)$,
\begin{equation*}
  \sum_{\alpham=2}\| \dalpha v \|_{L^q(\Omega)} \leq C_1 \|r^{\gamma-2}\|_{L^t(\Omega)} \sum_{\alpham=2}\|r^{2-\gamma}\dalpha v\|_{L^2(\Omega)}\leq C_2\| v\|_{\mathcal{J}^2_\gamma(\Omega)}.
\end{equation*}
Hence, if $v\in \mathcal{J}^2_\gamma(\Omega)$ with $\gamma>1$, $v\in W^{2, q}(\Omega)$, and $\| \nabla v\cdot n\|_{L^q(e)}$ is well
defined. Therefore, $\fullnorm{v}$ as defined in \eqref{eq:fulldgnorm-2d} is bounded when
$d=2$ for all $v\in X(\delta)$.
\end{remark}

\begin{remark}
By proving continuity as in Lemma \ref{lemma:estimates} (see the part of the
proof referring to inequality \eqref{eq:estimates-lemma1b}) and thanks to Remark
\ref{remark:normbound}, it can be shown that the bilinear form $a_\delta$
defined in \eqref{eq:bilin-discont} over $\Xd\times\Xd$ can be extended over $X(\delta) \times X_\delta$. 
\end{remark}

\begin{remark}
  \label{remark:edge-elem}
Note that on $X_\delta$ and for $d\leq3$, the two norms \eqref{eq:dgnorm} and
\eqref{eq:fulldgnorm} are uniformly (with respect to the refinement level)
equivalent, since for any $K\in\mathcal{T}^{\ell}$, $r_{|_K}\lesssim h_K$ and
thanks to the discrete trace inequality \cite{DiPietro2011}
\begin{equation}
  \label{eq:edge-elem}
h_e^{(1-d)/p+d/2} \| w_\delta \|_{L^p(e)} \leq C_{d,p}   \| w_\delta \|_{L^2(K)},
\end{equation}
valid for $e\in\partial K$ and for all $w_\delta \in X_\delta$. The constant
$C_{d,p}$ depends on the dimension $d$, on $p$, and on the polynomial order
  $\dpe$, but is independent of $\dhe$. Furthermore, $C_{d,p}$ is bounded by
  $\dpe$ if $p=2$.
\end{remark}

\begin{remark}
    \label{remark:alphae}
    The coercivity of the SIP discrete bilinear form associated to the Laplacian, i.e., the existence of $c>0$ such that, for all $\vd\in\Xd$
    \begin{equation*}
      (\nabla v_\delta, \nabla v_\delta)_{\mathcal{T}} - (\avg{\nabla v_\delta},\jump{v_\delta})_{\mathcal{E}}- (\avg{\nabla v_\delta},\jump{v_\delta})_{\mathcal{E}} + \sum_{e \in \mathcal{E}}\alpha_e \frac{\dpe^2}{\dhe} (\jump{v_\delta}, \jump{v_\delta})_{e} \geq c \dgnorm{\vd}^2,
    \end{equation*}
    is verified if for all $e\in\mathcal{E}$, $\alpha_e > \alpha^* > 0$, for
    some $\alpha^*$ that depends on the mesh. This is shown in
    Theorem 4.4 of Ref.~\refcite{Schotzau2013a} for geometrically graded meshes;
    for quasi-uniform meshes, see
    Refs.~\refcite{alphae-tria,alphae-tria2,alphae-quad} for explicit
    expressions of $\alpha^*$.
  \end{remark}

We conclude this section by introducing the discrete approximation to the solution of the linear problem, i.e.~the function $u_\delta^*\in X_\delta$ such that
\begin{equation}
  \label{eq:uds-def}
\langle A_\delta^{u} u_\delta^* - \lambda_\delta^* u_\delta^*, v_\delta \rangle = 0\quad \forall v_\delta \in X_\delta
\end{equation}
for an eigenvalue $\lambda_\delta^*$. Note that, since $u$ is an eigenfunction
of $A^u$ and the associated eigenspace is of dimension $1$ \cite{Cances2010},
we have that
\begin{equation}
  \label{eq:convergence-linear}
  \begin{aligned}
    \lVert u_\delta^* - u \rVert_\mathrm{DG} &\lesssim \inf_{v_\delta\in X_\delta}\fullnorm{u-v_\delta},\\
    | \lambda_\delta^* - \lambda | &\lesssim \inf_{v_\delta\in X_\delta}\fullnorm{u-v_\delta}^2,
  \end{aligned}
\end{equation}
 see Ref.~\refcite{linear}, and the eigenspace associated with $u_\delta^*$ is of
 dimension one, for a sufficiently large number of degrees of freedom \cite{Antonietti2006}.

 The isotropically refined \hp{} finite element space $\Xd$ defined here
 provides approximations that converge with exponential rate to the function in
 the weighted analytic class, as stipulated in the following statement, see
 Theorem 4.2 and Proposition 5.13 of Ref.~\refcite{Schotzau2013b}.
 \begin{proposition}
   \label{prop:best-appx}
Let $\gamma > d/2$ and $v\in
\mathcal{J}^\varpi_\gamma(\Omega)$. There exists two
constants $C, b>0$  such that for all $\ell \in \mathbb{N}$
\begin{equation}
  \label{eq:hp-exponential}
  \inf_{\vd\in\Xd} \fullnorm{v-\vd} \leq C e^{ -b \ell }.
\end{equation}
Here, $\ell$ is the number of refinement steps, and $\ell = N^{1/(d+1)}$, with
$N$ denoting the number of degrees of freedom of $\Xd$.
\end{proposition}

\section{Abstract a priori error bounds}
\label{sec:apriori}
In this section we prove some a priori estimates on the convergence of the
numerical eigenfunction and eigenvalue. We start by giving some continuity and coercivity
estimates, then we provide an auxiliary estimate on a scalar product where
we construct an adjoint problem, and we conclude by proving convergence and
quasi optimality for the eigenfunctions. The rate of convergence proven for the
eigenvalues is smaller than what is obtained in the linear case: in the following
it will be shown that under some additional hypothesis we can recover the 
rate typically obtained in the approximation of solutions to linear elliptic
operators with singular potentials \cite{linear}.

Since our main focus here is on isotropically refined \hp{} methods, the
approach we take uses the assumption that finite element space and the
underlying mesh are those of an \hp{} discontinuous Galerkin method, as
described in the previous sections. It is important to remark, nonetheless, that
the results of this section can be extended, with minimal effort, to the
analysis of a general discontinuous Galerkin approximation. The novelty of
the approach we use in this section lies, indeed, more into the treatment of the
nonconformity of the 
method than in the aspects related to the \hp{} space. The 
modification necessary to get a proof that applies to a classical
$h$-type discontinuous Galerkin finite element method, for example, would be
related to the continuity and coercivity estimates, since those would need not to use
the hypothesis that $r\simeq h$.

For the aforementioned reason, and for the sake of generality, we prove our
results for an $F$ as general as possible, even though the \hp{} method shows
its full power (i.e., exponential rate of convergence) only in a less general setting.

To conclude, we mention the fact that we will mainly write our proofs so that they work
for $d=3$, even though this sometimes means using a suboptimal strategy for the
case $d\leq 2$. Consider for example the bound
\begin{equation*}
  \| v \|_{L^p(\Omega)}\leq C \| v\|_{H^1(\Omega)},
\end{equation*}
for a $v\in H^1(\Omega)$: we will always use it for $p$ such that $1\leq p\leq 6$, even if for $d=2$ any
$1\leq p<\infty$ would be acceptable.
\subsection{Continuity and coercivity}
We start with an auxiliary lemma, where we prove the continuity, positivity and
coercivity of some operators. As mentioned before, we use the numerical
eigenvalue $\lds$ obtained from the numerical approximation of the linear
problem as a lower bound of the operators over the discrete space $\Xd$.
\begin{lemma}
\label{lemma:estimates}
Recall the definition of the operators $A^u_\delta$ and $E''_{\delta}(u)$,
of the spaces $X_\delta$ and $X(\delta)$, and of $\lambda^*_\delta$ provided in
Section \ref{sec:statement}. Suppose that for all $e\in \mathcal{E}$, $\alpha_e \geq \alpha^*>0$
for sufficiently large $\alpha^*$ (see Remark \ref{remark:alphae}).There exist $\delta_0>0$ and a constant
  $C_{\mathrm{cont},1}>0$ such that
\begin{subequations}
\begin{align}
  \label{eq:estimates-lemma1b}
 \lvert  \langle \left( A^u_\delta - \lambda_\delta^* \right) v, v_\delta \rangle  \rvert &\leq C_{\mathrm{cont}} \fullnorm{v}\dgnorm{v_\delta} &\forall v \in X(\delta), \forall v_\delta \in X_\delta, \forall \delta\leq 1\\
  \label{eq:estimates-lemma1a}
  \langle \left( A^u_\delta - \lambda_\delta^* \right) v_\delta, v_\delta \rangle & \geq 0 &\forall v_\delta \in X_\delta, \forall\delta\leq \delta_0.
\end{align}
\end{subequations}
Furthermore, there exist $C_{\mathrm{pos}}, C_{\mathrm{coer}}, C_{\mathrm{cont},2}>0$
\begin{equation}
\label{eq:estimates-lemma2}
 \langle \left( A^u_\delta - \lambda_\delta^* \right) (u_\delta - u_\delta^*), (u_\delta-u_\delta^*) \rangle  \geq C_{\mathrm{pos}}\lVert u_\delta - u_\delta^*\rVert^2_\mathrm{DG},  \qquad \forall\delta\leq \delta_0,
\end{equation}
\begin{subequations}
  and
\begin{align}
\label{eq:estimates-lemma3a}
    \langle \left( E''_\delta(u) - \lambda_\delta^* \right) v_\delta, v_\delta \rangle & \geq C_{\mathrm{coer}} \lVert v_\delta\rVert_\mathrm{DG}^2 &\forall v_\delta \in X_\delta, \forall \delta \leq 1\\
\label{eq:estimates-lemma3b}
    \lvert \langle \left( E''_\delta(u) - \lambda_\delta^* \right) v, v_\delta \rangle \rvert &  \leq C_{\mathrm{cont}, 2}\fullnorm{v}\dgnorm{v_\delta}  &\forall v\in X(\delta), \forall v_\delta \in X_\delta, \forall \delta \leq 1.
\end{align}
\end{subequations}
\end{lemma}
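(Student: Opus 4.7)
The plan is to prove the five sub-estimates in increasing order of difficulty, starting from the two continuity bounds and the $L^2$ positivity, before tackling the two coercivity statements that form the core of the lemma.

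For the continuity estimates \eqref{eq:estimates-lemma1b} and \eqref{eq:estimates-lemma3b}, I would expand the bilinear forms into their constitutive pieces. The broken gradient term and penalty term are handled by direct Cauchy--Schwarz against $\dgnorm{v_\delta}$. The consistency contribution $(\avg{\nabla v},\jump{v_\delta})_{\mathcal{E}}$ is bounded by a weighted Cauchy--Schwarz on each face: on faces away from $\fc$ one pairs $\dpe^{-2}\|r^{1/2}\nabla v\cdot n_e\|_{L^2(e)}^2$ from $\fullnorm{v}^2$ against $\dpe^2\dhe^{-1}\|\jump{v_\delta}\|_{L^2(e)}^2$ from $\dgnorm{v_\delta}^2$, exploiting $r\lesssim \dhe$ elementwise; for $d=2$ on edges in $\mathcal{E}_c$ one uses the $L^q$ part of $\fullnorm{v}$ together with the $L^{q'}$ discrete trace inequality of Remark \ref{remark:edge-elem}, taking advantage of $\dpe=p_0$ there. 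The potential term $(Vv,v_\delta)$ is controlled through \eqref{eq:hyp-potential-1} and Sobolev embedding, and $(f(u^2)v,v_\delta)$ through $u\in L^\infty(\Omega)$ and \eqref{eq:cond2}, giving $f(u^2)\in L^\infty(\Omega)$; the extra term $2(f'(u^2)u^2v,v_\delta)$ relevant to \eqref{eq:estimates-lemma3b} is analogous thanks to \eqref{eq:cond3}. Estimate \eqref{eq:estimates-lemma1a} follows directly from the Rayleigh-quotient characterization of $\lds$ in \eqref{eq:uds-def} as the smallest eigenvalue of $A^u_\delta$ on $\Xd$.

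For the coercivity \eqref{eq:estimates-lemma2}, let $\mu_\delta^\star$ denote the second smallest eigenvalue of $A^u_\delta$ on $\Xd$. The ground state of $A^u$ being simple \cite{Cances2010}, the continuous spectral gap $\mu^\star-\lambda^\star$ is positive, and convergence of the discrete linear problem transfers this to a uniform lower bound on $\mu_\delta^\star-\lds$ for $\delta$ small. I would decompose $\ud-\uds = c\,\uds + r$ with $(r,\uds)=0$; because both functions are $L^2$-normalised, $|c|=\tfrac12\|\ud-\uds\|^2$, so $\|r\|^2\ge\tfrac12\|\ud-\uds\|^2$ for $\delta$ small enough. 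Combined with $A^u_\delta\uds=\lds\uds$ and the spectral gap, this yields
\begin{equation*}
\langle(A^u_\delta-\lds)(\ud-\uds),\ud-\uds\rangle = \langle(A^u_\delta-\lds)r,r\rangle \gtrsim \|\ud-\uds\|^2.
\end{equation*}
To pass to the DG norm, I would invoke the standard SIP coercivity $a_\delta(v_\delta,v_\delta)+C\|v_\delta\|^2\gtrsim \dgnorm{v_\delta}^2$ for $\alpha_e$ large enough, and absorb the $\|v_\delta\|^2$ term via the $L^2$ coercivity just proved.

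For \eqref{eq:estimates-lemma3a}, writing $E''_\delta(u)=A^u_\delta+2(f'(u^2)u^2\,\cdot,\cdot)$ with pointwise nonnegative extra coefficient (by \eqref{eq:cond1}), and decomposing $v_\delta=c\uds+r$ as above, I obtain
\begin{equation*}
\langle(E''_\delta(u)-\lds)v_\delta,v_\delta\rangle \ge (\mu_\delta^\star-\lds)\|r\|^2 + 2\int_\Omega f'(u^2)u^2 v_\delta^2.
\end{equation*}
Expanding $v_\delta^2=c^2(\uds)^2+2c\uds r+r^2$, the strict positivity of $\int f'(u^2)u^4$ combined with the $L^2$ convergence $\uds\to u$ gives $\int f'(u^2)u^2(\uds)^2\ge\kappa>0$ for $\delta$ small, and the cross term is absorbed via Young's inequality into a small multiple of $c^2$ and a small multiple of $\|r\|^2$, the latter then dominated by the spectral-gap term. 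This produces $L^2$-coercivity of $E''_\delta(u)-\lds$, which is lifted to the DG norm exactly as in \eqref{eq:estimates-lemma2}. The principal obstacle lies in these last two estimates, where I need a uniform lower bound on the discrete spectral gap $\mu_\delta^\star-\lds$; this rests on uniform approximability of the second eigenpair of $A^u$ by the nonconforming dG method, whose symmetric structure is essential. The $d=2$ consistency estimate on edges in $\mathcal{E}_c$ is the next most delicate point, since the trace of $r^{1/2}\nabla v\cdot n_e$ is only $L^q$-integrable there and must be paired via the discrete trace inequality with the polynomial jump.
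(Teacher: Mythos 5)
Your overall architecture -- two continuity bounds by weighted Cauchy--Schwarz, positivity of $A^u_\delta-\lds$ via the discrete ground-state characterization, then spectral-gap coercivity on the $\uds$-orthogonal complement lifted to the DG norm by the SIP G\aa{}rding-type bound -- is the same as the paper's, and \eqref{eq:estimates-lemma1b}, \eqref{eq:estimates-lemma1a}, \eqref{eq:estimates-lemma2}, \eqref{eq:estimates-lemma3b} are handled in essentially the way the authors do. Two remarks are in order.

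First, on \eqref{eq:estimates-lemma1b}: the paper's proof in dimension $d=3$ splits $v=\tilde v+\tilde v_\delta$ with $\tilde v\in X$, so $\jump{v}=\jump{\tilde v_\delta}$; on a face $e$ with $\fc\in\bar e$, $\jump{\tilde v_\delta}|_e\in\mathbb{Q}_{p_0}(e)$ lies in a fixed-dimension polynomial space, which is what makes the equivalence $\dhe^{-1}\|\jump{\cdot}\|_{L^2(e)}^2\simeq\|r^{-1/2}\jump{\cdot}\|_{L^2(e)}^2$ survive even where $r$ vanishes on $e$. You only invoke $r\lesssim\dhe$ on faces away from $\fc$ and switch to the $L^q$ trick for $d=2$, but you are silent on the $d=3$ faces abutting the singularity, which is precisely where $r\not\gtrsim\dhe$ and the naive weighted Cauchy--Schwarz breaks down. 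The decomposition argument (or an equivalent finite-dimensionality observation) is needed there.

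Second, on \eqref{eq:estimates-lemma3a} you take a genuinely different route: the paper argues by contradiction (a sequence $v_\delta^j$ with $\|v_\delta^j\|=1$ and vanishing quadratic form is forced by the $A$-part to converge in $L^2$ to $\uds$, contradicting $\int f'(u^2)u^2(\uds)^2>\alpha>0$), whereas you attempt a direct quadratic-form estimate via the decomposition $v_\delta=c\uds+r$. Your argument can be made rigorous, but the step ``absorb the cross term via Young's inequality into a small multiple of $c^2$ and a small multiple of $\|r\|^2$'' is the weak link as stated. If Young is applied to the integrated cross term $4c\int f'(u^2)u^2\uds r$, it gives $\eta c^2+\tfrac{4M^2}{\eta}\|r\|^2$ with $M=\|f'(u^2)u^2\|_{L^\infty}$; making $\eta<2\kappa$ and $\tfrac{4M^2}{\eta}<\mu_\delta^\star-\lds$ simultaneously requires $2M^2<\kappa(\mu_\delta^\star-\lds)$, which is not available from the hypotheses. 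The estimate does close if one instead applies Young pointwise inside the integral, writing $2c\uds\,r\geq-\epsilon c^2(\uds)^2-\tfrac1\epsilon r^2$ and keeping the otherwise-discarded nonnegative term $\int f'(u^2)u^2 r^2$ to cancel the $\tfrac1\epsilon$-piece; then choosing $\epsilon\in\bigl(\tfrac{2M}{2M+\mu_\delta^\star-\lds},1\bigr)$ leaves a strictly positive quadratic form on the unit sphere $c^2+\|r\|^2=1$. So the idea is sound, but the proof needs to record that the positivity of $f'(u^2)u^2 r^2$ is used in the absorption step, or fall back on the paper's compactness-style contradiction, which avoids these constant-tracking subtleties entirely.
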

\begin{proof}
Let us first consider the continuity inequality \eqref{eq:estimates-lemma1b}. The proof when $d=2$ is classical, see in particular \cite[Lemma 4.30]{DiPietro2011}
for the bound on the edges in $\mathcal{E}_c$, and the same arguments that we
use here for the bounds on the rest of the elements and edges. We
restrict then ourselves here to the case $d=3$, where we use a slightly different
norm than usual. 
Consider a function $v\in X(\delta)$. We can decompose $v=\tv+ \de{\tv}$, where
$v\in X$ and $\de{\tv}\in \de{X}$. Consider an edge/face $e\in \mathcal{E}^{\ell}$.
Then, $\jump{v}_{|_e} = \jump{\de{\tv}}_{|_e}$. If $\mathfrak{C}\cap \bar{e} =
\emptyset$, then $\dhe\simeq r_{|e}$, uniformly with respect to $e$ and
$\ell$. If instead there exists a $\fc\in \mathfrak{C}\cap S$
such that $\fc$ is one of the vertices of $e$, then
$\jump{\de{\tv}}_{|_e}\in\mathbb{Q}_{p_0}(e)$, which is a finite dimensional
space, whose dimension is independent of the refinement level $\ell$, which
  implies that $\dhe^{-1} \| \jump{\cdot}\|^2_{L^2(e)} \simeq 
    \| r^{-1/2}\jump{\cdot}\|^2_{L^2(e)}$, uniformly with respect to $\ell$. Therefore on $X(\delta)$ we have the
uniform equivalence
  \begin{equation}
  \label{eq:jump-eq-3d}
  \dhe^{-1} \| \jump{\cdot}\|^2_{L^2(e)} \simeq 
    \| r^{-1/2}\jump{\cdot}\|^2_{L^2(e)}, \qquad \forall e\in \mathcal{E}^\ell,
  \end{equation}
  if $d=3$, with hidden constant independent of $e$ and of $\ell$.
The continuity estimate \eqref{eq:estimates-lemma1b} can be obtained through multiple applications of H\"older's inequality: we consider the terms in the bilinear form separately. First, 
we exploit the fact that, as shown in Theorem 2.1
and Remark 2.3 of Ref. \refcite{Lasis2003}, for all $v \in 
H^1(\mathcal{T}^\ell):=\lbrace v: v_{|_K} \in H^1(K),\,\forall K\in
\mathcal{T}^\ell\rbrace$, there exists $C>0$ depending only on the domain $\Omega$
such that
\begin{equation*}
  \lVert v\rVert^2_{L^q(\Omega)} \leq C\left( \sum_{K\in \mathcal{T}^\ell}\|v\|^2_{H^1(K)}
    + \sum_{e\in\mathcal{E}^\ell} \| \jump{v} \|^2_{L^{2d/(d-2)}(e)}\right)
\end{equation*}
with $q\leq 2d/(d-2)$ if $d\geq 3$. Now, for any
function $v \in X(\delta)\subset H^1(\mathcal{T}^\ell)$ there
holds, for all $e\in \mathcal{E}^\ell$, $\jump{w}_{|_{e}}\in\mathbb{Q}_{\dpe}(e) $. By
Bernstein's inequality, then,
\begin{equation}
\label{eq:discrete-embedding}
  \lVert v\rVert_{L^q(\Omega)} \lesssim \lVert v\rVert_\mathrm{DG} \quad \forall v \in X(\delta)
\end{equation}
with $q\leq 2d/(d-2)$ if $d\geq 3$ and $q\in [1,\infty)$ if $d=2$.
Thus
  \begin{equation*}
    \left|\sum_{K\in\mathcal{T}}(\nabla v,\nabla \vd)_K+ (Vv, \vd)_K\right| \lesssim  \dgnorm{v}\dgnorm{\vd}
  \end{equation*}
  Secondly, 
\begin{align*}
    \left|\sum_e (\avg{\nabla { v}},\jump{{\vd}})_e \right| 
& \lesssim \sum_e \dpe^{-1}\|r^{1/2}\avg{\nabla  v}\|_{L^2(e)}  \dpe \| r^{-1/2} \jump{\vd}\|_{L^2(e)}\\
& \lesssim \sum_e \dpe^{-1}\|r^{1/2}\avg{\nabla  v}\|_{L^2(e)}  \dpe\dhe^{-1/2} \|  \jump{\vd}\|_{L^2(e)}\\
                            &\lesssim \left(\sum_e \dpe^{-2}\|r^{1/2}\avg{\nabla  v}\|^2_{L^2(e)}\right)^{1/2}\left(\sum_e  \dpe^2\dhe^{-1}\| \jump{\vd}\|^2_{L^2(e)}\right)^{1/2}
  \end{align*}
  where the second inequality follows from  \eqref{eq:jump-eq-3d}. Similarly,
  \begin{align*}
    \left|\sum_e (\avg{\nabla { \vd}},\jump{{v}})_e \right| &
\lesssim \left(\sum_e \dpe^{-2}\dhe\|\avg{\nabla  \vd}\|^2_{L^2(e)}\right)^{1/2}\left(\sum_e  \dpe^2\dhe^{-1}\|  \jump{v}\|^2_{L^2(e)}\right)^{1/2}\\
&\lesssim \left(\sum_K \|\nabla \vd\|^2_{L^2(K)}\right)^{1/2}\left(\sum_e  \dpe^2\dhe^{-1}\|  \jump{v}\|^2_{L^2(e)}\right)^{1/2},
  \end{align*}
  using \eqref{eq:edge-elem} in the second line.
  Then, 
  \begin{align*}
    \left|\sum_{e \in \mathcal{E}}\alpha_e \frac{\dpe^2}{\dhe} (\jump{v}, \jump{{\vd}})_{e}\right| 
        &\lesssim C \sum_e \left(\dpe \dhe^{-1/2}\| \jump{u} \|_{L^2(e)}\right) \left(\dpe \dhe^{-1/2}\| \jump{\vd} \|_{L^2(e)}\right)\\
        &\lesssim C \left(\sum_e\dpe^{2}\dhe^{-1}\| \jump{v} \|^2_{L^2(e)}\right)^{1/2}\left(\sum_e\dpe^{2}\dhe^{-1}\| \jump{\vd} \|^2_{L^2(e)}\right)^{1/2}.
  \end{align*}
  Thanks to the H\"{o}lder inequality, Sobolev imbeddings, hypothesis \eqref{eq:cond2}, and \eqref{eq:discrete-embedding}, 
  \begin{align*}
    \left| \int_\Omega f(u^2) v  \vd \right| 
&\lesssim \| 1 + u^{2q}\|_{L^{3/2}(\Omega)} \| v\|_{L^6(\Omega)}\| \vd\|_{L^6(\Omega)}\\
&\lesssim \| u\|^{2q}_{H^{1}(\Omega)} \| v\|_{H^1(\Omega)}\dgnorm{ \vd}\\
&\lesssim  \| v\|_{H^1(\Omega)}\dgnorm{ \vd}.
  \end{align*}
Since then, by \eqref{eq:convergence-linear} and
  Proposition \ref{prop:best-appx}, $\lds \to \lambda$ as $\delta\to 0$, we have that $|\lds(v,\vd)|\leq C \|v\|\|\vd\|$ and this, combined with the above inequalities, proves \eqref{eq:estimates-lemma1b}.

We now consider \eqref{eq:estimates-lemma1a}. As already stated,
$\lambda_\delta^*$ is a simple eigenvalue for a sufficient number of degrees of
freedom and therefore $A^u_\delta -\lds$ is coercive on the subspace of $\Xd$
$L^2$-orthogonal to $\uds$. Hence, since $\|\uds\|=1$ and $A^u_\delta$ is symmetric,
\begin{equation}
  \label{eq:semipos}
  \begin{aligned}
  \langle \left( A^u_\delta - \lambda_\delta^* \right) v_\delta, v_\delta \rangle 
 &=  \langle \left( A^u_\delta - \lambda_\delta^* \right) (\vd-(\vd,\uds)_\Omega\uds), \vd-(\vd,\uds)_\Omega\uds \rangle \\
&\gtrsim \lVert v_\delta\rVert^2 - (u_\delta^*, v_\delta)^2 \geq 0 ,
  \end{aligned}
\end{equation}
for all $\vd \in \Xd$.
We may then prove \eqref{eq:estimates-lemma2} following the same reasoning as in
Ref. \refcite{Cances2010}. We recall it here for ease of reading. We choose, without loss
of generality, $\uds$ such that $(\uds, \ud)\geq 0$. From the above
inequality we have (recall that $\|\uds\|=\|\ud\|=1$)
\begin{equation}
  \label{eq:A-l2norm}
  \begin{aligned}
  \langle \left( A^u_\delta - \lambda_\delta^* \right) (u_\delta - u_\delta^*), (u_\delta-u_\delta^*) \rangle  
& \gtrsim \| \ud-\uds\|^2 - (\uds, \ud-\uds)^2 \\
& = \| \ud-\uds\|^2 - (1 + (\uds, \ud)^2 - 2(\uds,\ud))\\
& = 1 - (\uds, \ud)^2 \\
& \geq \frac{1}{2} \lVert u_\delta - u_\delta^*\rVert^2,
  \end{aligned}
\end{equation}
and this proves \eqref{eq:estimates-lemma2}.
To prove \eqref{eq:estimates-lemma3a}, we note that
\begin{equation}
  \label{eq:secondE}
  \langle \left( E''_\delta(u) - \lambda_\delta^* \right) v_\delta, v_\delta \rangle \geq \langle \left( A^u_\delta - \lambda_\delta^* \right) v_\delta, v_\delta \rangle + \int_\Omega f'(u^2)u^2 v_\delta^2.
\end{equation}
Suppose we negate \eqref{eq:estimates-lemma3a}: then, there has to be a sequence
$\{v_\delta^j\}_j\subset \Xd$ such that $\|v_\delta^j\|=1$ and $\langle \left(
  E''_\delta(u) - \lambda_\delta^* \right) v_{\delta}^j, v_\delta^j \rangle \to
0$. Since $\int_\Omega f'(u^2)u^2 (v_\delta^j)^2>0$, from \eqref{eq:semipos} we have
that
\begin{align*}
  \frac{1}{2} \| v_\delta^j - \uds\|^2
  &= \| v_\delta^j\|^2 - (v_\delta^j, \uds)^2 \\
&\lesssim \langle  (E''(u) - \lds) v_\delta^j, v_\delta^j\rangle,
\end{align*}
thus, $v_\delta^j\to \uds$ in $L^2(\Omega)$. Now, since $\uds$ converges towards $u$ in the
$\mathrm{DG}$ norm, and using \eqref{eq:cond3} and the positivity of $f'$, we
can show that there exists an $\alpha>0$ such that, for a sufficient
number of degrees of freedom,
\begin{equation*}
  \int_\Omega f'(u^2)u^2 (\uds)^2 > \alpha.
\end{equation*}
This negates the contradiction hypothesis that $\langle \left(
  E''_\delta(u) - \lambda_\delta^* \right) v_{\delta}^j, v_\delta^j \rangle \to
0$, hence there exists a constant $C>0$, independent of $\delta$, such that
\begin{equation}
  \label{eq:E2L2}
  \langle \left( E''_\delta(u) - \lambda_\delta^* \right) v_\delta, v_\delta \rangle \geq 
  C \|\vd\|^2
\end{equation}
for all $\vd\in\Xd$.
Then, using the fact that there exists $c>0$ such that for
all $\vd\in \Xd$,
\begin{equation*}
  (\nabla v_\delta, \nabla v_\delta)_{\mathcal{T}} - (\avg{\nabla v_\delta},\jump{v_\delta})_{\mathcal{E}}- (\avg{\nabla v_\delta},\jump{v_\delta})_{\mathcal{E}} + \sum_{e \in \mathcal{E}}\alpha_e \frac{\dpe^2}{\dhe} (\jump{v_\delta}, \jump{v_\delta})_{e} \geq c \dgnorm{\vd}^2,
\end{equation*}
see Remark \ref{remark:alphae}, combined with the estimate from the proof of \cite[Lemma 1]{Cances2010}, we can
show that there exist constants $\alpha, C>0$ independent of $\delta$ such that 
\begin{equation}
\label{eq:A-semicoer}
   \langle \left( A^u_\delta - \lambda_\delta^* \right) v_\delta, v_\delta \rangle \geq \alpha \lVert v_\delta \rVert^2_\mathrm{DG} - C \lVert v_\delta \rVert^2 \qquad \forall \vd\in\Xd.
\end{equation}
The coercivity estimate \eqref{eq:estimates-lemma3a} then follows from
\eqref{eq:E2L2} and \eqref{eq:A-semicoer}.

Finally, \eqref{eq:estimates-lemma3b} follows directly from the definition of
$E''_\delta(u)$, the continuity estimate \eqref{eq:estimates-lemma1a} and 
the fact that $|f'(u^2)u^2| \leq C$.
\end{proof}
  \subsection{Estimates on the adjoint problem}
  In this section we develop an estimate on the scalar product between a
  function and the error $u-\ud$, whose
  interest lies mainly in the $L^2(\Omega)$ convergence estimate given in
  Theorem \ref{theorem:convergence1}.
  The estimate is based on the introduction of the adjoint problem \eqref{eq:nonlin-adjoint}.
\begin{lemma}
  \label{lemma:adjoint}
  Let $\udso= \lbrace \vd\in \Xd: (\vd, \uds) = 0 \rbrace$ be the space of functions $L^2(\Omega)$-orthogonal to $\uds$ and let $\psiwd$ be the solution to the problem
  \begin{equation}
    \label{eq:nonlin-adjoint}
    \begin{aligned}
      &\text{find }\psiwd\in \udso\text{ such that}\\
      &\langle \left( E_\delta''(u) - \lds \right) \psiwd , \vd \rangle = \langle \wdel, \vd \rangle, \forall \vd\in\udso
    \end{aligned}
  \end{equation}
  for $\wdel$ in $L^2(\Omega)$.
  Then, if hypotheses \eqref{eq:cond1} to \eqref{eq:cond4} hold,
  \begin{equation}
\label{eq:scalar-prod-estim}
    \begin{aligned}
      \left| \langle  \wdel, u_\delta - \uds \rangle \right|
&\lesssim \| \ud - \uds\|^r_{L^{6r/(5-s)}} \dgnorm{\psiwd} + | \lambda_\delta - \lds| \| u_\delta - \uds \| \|\psiwd \| 
  \\  &\quad  + \| u-\uds\|\|\psiwd\|
  + \| \ud - \uds \|^2 \|\psi_\wdel\| 
 + \| \ud - \uds \|^2 \| \wdel \|, 
    \end{aligned}
  \end{equation}
\end{lemma}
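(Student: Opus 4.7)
The plan is a duality argument in the spirit of the Aubin--Nitsche trick, complicated by the fact that $\ud-\uds$ is not $L^2$-orthogonal to $\uds$: since the adjoint problem \eqref{eq:nonlin-adjoint} is posed on $\udso$, $\psiwd$ does not directly test against $\ud-\uds$. I would therefore first split off the component along $\uds$. Setting $c=(\ud-\uds,\uds)$ and $z_\delta=(\ud-\uds)-c\uds\in\udso$, the identity $\|\ud\|=\|\uds\|=1$ yields $|c|\lesssim\|\ud-\uds\|^2$ exactly as in \eqref{eq:A-l2norm}. The decomposition $\langle\wdel,\ud-\uds\rangle=\langle\wdel,z_\delta\rangle+c\,(\wdel,\uds)$ then immediately gives the last term $\|\ud-\uds\|^2\|\wdel\|$ of \eqref{eq:scalar-prod-estim} from the $c\,(\wdel,\uds)$ piece.

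For $\langle\wdel,z_\delta\rangle$, I would use the adjoint equation together with the symmetry of $E''_\delta(u)-\lds$ to rewrite it as $\langle(E''_\delta(u)-\lds)z_\delta,\psiwd\rangle$. Reinserting $z_\delta=(\ud-\uds)-c\uds$ separates the argument into two pieces. The contribution $-c\,\langle(E''_\delta(u)-\lds)\uds,\psiwd\rangle$ is handled using that $\uds$ solves \eqref{eq:uds-def}, so the $A^u_\delta-\lds$ part vanishes when tested against $\psiwd\in\Xd$, leaving $-2c\int_\Omega f'(u^2)u^2\uds\,\psiwd$; by \eqref{eq:cond3} and $\|\uds\|=1$, this is bounded by $|c|\|\psiwd\|\lesssim\|\ud-\uds\|^2\|\psiwd\|$, the fourth term of \eqref{eq:scalar-prod-estim}.

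The core of the proof is the remaining $\langle(E''_\delta(u)-\lds)(\ud-\uds),\psiwd\rangle$. Subtracting the eigenvalue identities \eqref{eq:eq-discont} and \eqref{eq:uds-def} tested against $\psiwd$ and using $A^u_\delta=E''_\delta(u)-2f'(u^2)u^2\,\cdot$ yields
\begin{equation*}
  \langle(E''_\delta(u)-\lds)(\ud-\uds),\psiwd\rangle = (\ld-\lds)(\ud,\psiwd) - \int_\Omega\bigl[(f(\ud^2)-f(u^2))\ud - 2f'(u^2)u^2(\ud-\uds)\bigr]\psiwd.
\end{equation*}
Since $\psiwd\in\udso$, $(\ud,\psiwd)=(\ud-\uds,\psiwd)$, producing the second term $|\ld-\lds|\|\ud-\uds\|\|\psiwd\|$. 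The integrand involving $f$ I would then decompose as
\begin{equation*}
  \bigl[(f(\ud^2)-f(\uds^2))\ud - 2f'(\uds^2)\uds^2(\ud-\uds)\bigr] + (f(\uds^2)-f(u^2))\ud + 2\bigl[f'(\uds^2)\uds^2 - f'(u^2)u^2\bigr](\ud-\uds).
\end{equation*}
The first bracket matches precisely the Taylor-type remainder in \eqref{eq:cond4} with $t_1=\uds$, $t_2=\ud$, and is pointwise dominated by $C(1+|\ud|^s)|\ud-\uds|^r$; a three-function H\"older inequality with exponents $(6/s,6/(5-s),6)$, together with the discrete embedding \eqref{eq:discrete-embedding} applied to $\|\psiwd\|_{L^6}$ and uniform control of $\|\ud\|_{L^6}$, yields exactly $\|\ud-\uds\|^r_{L^{6r/(5-s)}}\dgnorm{\psiwd}$. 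The remaining two cross terms are controlled by the Lipschitz continuity of $t\mapsto f(t^2)$ and $t\mapsto f'(t^2)t^2$ on bounded intervals, combined with uniform bounds on $u$, $\ud$, $\uds$, producing the stand-alone $\|u-\uds\|\|\psiwd\|$ contribution.

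The main obstacle is this last rearrangement: \eqref{eq:cond4} is a second-order Taylor remainder with linearization at $t_1$, but the quantity produced by subtracting the two discrete eigen-identities is $2f'(u^2)u^2(\ud-\uds)$, linearized at $u$ rather than at the discrete reference $\uds$. Shifting the linearization point to expose the $(t_1,t_2)=(\uds,\ud)$ structure forces in the correction terms $(f(\uds^2)-f(u^2))\ud$ and $[f'(\uds^2)\uds^2-f'(u^2)u^2](\ud-\uds)$; bounding these without acquiring an extra factor of $\|\ud-\uds\|$ is what dictates the appearance of the isolated $\|u-\uds\|\|\psiwd\|$ term in the final estimate.
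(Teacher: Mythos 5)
Your decomposition of $\ud-\uds$ into a component along $\uds$ and one in $\udso$, the use of the adjoint equation together with symmetry, the cancellation of $\langle(A^u_\delta-\lds)\uds,\psiwd\rangle$ via \eqref{eq:uds-def}, and the identification of the fourth and fifth terms of \eqref{eq:scalar-prod-estim} with the parallel component all coincide with the paper's argument. The divergence, and the gap, is in how you handle the nonlinear contribution to $\langle(E''_\delta(u)-\lds)(\ud-\uds),\psiwd\rangle$.

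You shift the linearization point of the Taylor remainder from $u$ to $\uds$, writing
\begin{equation*}
\bigl[f(\ud^2)\ud - f(\uds^2)\ud - 2f'(\uds^2)\uds^2(\ud-\uds)\bigr]
+ \bigl(f(\uds^2)-f(u^2)\bigr)\ud
+ 2\bigl[f'(\uds^2)\uds^2 - f'(u^2)u^2\bigr](\ud-\uds),
\end{equation*}
and then invoke \eqref{eq:cond4} with $(t_1,t_2)=(\uds,\ud)$ for the first bracket, and ``Lipschitz continuity of $t\mapsto f(t^2)$ and $t\mapsto f'(t^2)t^2$ on bounded intervals'' for the cross terms. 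Neither step is justified by the stated hypotheses. Condition \eqref{eq:cond4} requires $t_1\in(0,R]$; the exact ground state satisfies $u\geq u_{\min}>0$, but a discrete dG eigenfunction $\uds$ has no sign control, so you may not evaluate the remainder at $t_1=\uds(x)$. Likewise, hypothesis \eqref{eq:cond1} gives only $f=F'\in C^0([0,\infty))\cap C^\infty((0,\infty))$ and \eqref{eq:cond3} only local boundedness of $F''(t)t$; neither gives Lipschitz behavior of $f(t^2)$ or $f'(t^2)t^2$ near $t=0$, which is precisely where $\uds$ may take values. So both the main bracket and the cross terms are not controllable from the assumptions.

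The paper avoids this by keeping the linearization at $t_1=u$ and shifting instead at the level of the increment: it writes $2f'(u^2)u^2(\ud-\uds) = 2f'(u^2)u^2(\ud-u) + 2f'(u^2)u^2(u-\uds)$, so the remainder $f(\ud^2)\ud - f(u^2)\ud - 2f'(u^2)u^2(\ud-u)$ is the admissible Taylor expression in \eqref{eq:cond4} with $t_1=u>0$ (yielding the weighted $L^{6r/(5-s)}$ norm of $\ud-u$, not $\ud-\uds$, which is likely a slip in the statement), while the leftover $2\int_\Omega f'(u^2)u^2(u-\uds)\psiwd$ is bounded directly via \eqref{eq:cond3} and $u\in L^\infty$, producing the isolated $\|u-\uds\|\,\|\psiwd\|$ term. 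You should adopt this splitting rather than relocate the Taylor base to $\uds$.
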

\begin{proof}
We break $\ud -\uds$ into two parts, one parallel to $\uds$ and one
perpendicular to it. Those are given respectively by
\begin{equation*}
(\ud-\uds, \uds)\uds = -\frac{1}{2}\|\ud - \uds \|^2 \uds \quad\text{ and }\quad \ud-(\ud,\uds)\uds \in \udso.
\end{equation*}
Then,
\begin{equation}
  \label{eq:lemma-adj1}
\begin{aligned}
  \langle \wdel, \ud - \uds \rangle & = (\wdel, \ud - (\ud,\uds)\uds) - \frac{1}{2}\|\ud-\uds\|^2(\wdel, \uds)\\
  & = \langle  \left( E_\delta''(u) - \lds \right) \psi_\wdel, \ud - (\ud,\uds)\uds \rangle - \frac{1}{2} \| \ud - \uds \|^2 (\wdel, \uds) \\
& =  \langle  \left( E_\delta''(u) - \lds \right) (\ud-\uds), \psi_\wdel \rangle - \frac{1}{2} \| \ud - \uds \|^2 \langle  \left( E_\delta''(u) - \lambda\right) \uds,  \psi_\wdel \rangle \\ 
&\quad  - \frac{1}{2} \| \ud - \uds \|^2 (\wdel, \uds) \\ 
& =  \langle  \left( E_\delta''(u) - \lds \right) (\ud-\uds), \psi_\wdel \rangle - \| \ud - \uds \|^2 \int_\Omega  f'(u^2) u^2\uds \psi_\wdel \\ 
&\quad - \frac{1}{2} \| \ud - u \|^2 (\wdel, \uds) . 
\end{aligned}
\end{equation}
We consider the first term:
\begin{equation}
  \label{eq:lemma-adj2}
\begin{aligned}
  \langle  \left( E_\delta''(u) - \lds \right) (\ud-\uds), \psi_{\wdel} \rangle &=
  \langle (\Ad^u-\lds)\ud, \psiwd \rangle + 2\int_\Omega f'(u^2)u^2\psiwd(\ud -\uds)\\
& = -\int_\Omega \left[ f(\ud^2)\ud - f(u^2)\ud - 2f'(u^2)u^2(\ud-u)\right] \psiwd \\
& \quad+ (\ld -\lds) (\ud - \uds, \psiwd)\\
& \quad+ 2\int_\Omega f'(u^2)u^2 \psiwd (u-\uds).
\end{aligned}
\end{equation}
Thanks to \eqref{eq:cond4}, combining \eqref{eq:discrete-embedding}, \eqref{eq:lemma-adj1}, and \eqref{eq:lemma-adj2} we can infer that
\begin{align*}
  \left| \langle  \wdel, u_\delta - \uds \rangle \right|
  &\lesssim \| \ud - \uds\|^r_{L^{6r/(5-s)}} \dgnorm{\psiwd} + | \lambda_\delta - \lds| \| u_\delta - \uds \| \|\psiwd \| 
  \\  &\quad  + \| u-\uds\|\|\psiwd\|
  + \| \ud - \uds \|^2 \int_\Omega  \left| f'(u^2) u^2\uds \psi_\wdel\right| 
\\  &\quad  + \| \ud - \uds \|^2 \left| (\wdel, \uds) \right|, 
\end{align*}
which gives the thesis.
\end{proof}

\subsection{Convergence}
\label{sec:convergence}
At this stage, we are able to prove the convergence result for the
numerical eigenfunction and eigenvalue. We work mainly in the discrete setting,
in order to avoid the issues due to the nonconformity of the method.
The analysis is carried out for the symmetric interior penalty discontinuous
Galerkin method, but it holds for any nonconforming symmetric
method, as long as the results of Lemma \ref{lemma:estimates} hold for such a method.
Furthermore, the remark made at the beginning of Section \ref{sec:apriori} still
holds, in that the result can be adapted with few modifications to a classical
$h$-type discontinuous Galerkin finite element method.

In general, the goal is to prove that the numerical eigenvalue-eigenfunction
couple obtained as solution to the nonlinear problem converges as fast as for
linear elliptic operators. In this section, we obtain this result for the eigenfunction,
which is shown to converge quasi optimally. We prove that the eigenvalue converges at least as fast as the eigenfunction.

The following theorem gives then the above mentioned estimates on the
convergence of the eigenfunction and eigenvalue. We start by showing the
convergence to zero of the error, and use this result to show that the estimate
is quasi optimal. We then show that the eigenvalue converges, with the basic
rate mentioned above, and conclude by showing an estimate on the $L^2(\Omega)$
norm of the error.
\begin{theorem}
  \label{theorem:convergence1}
  If the hypotheses \eqref{eq:cond1} to \eqref{eq:cond4} on $F$ hold and the
  hypotheses on the potential $V$ \eqref{eq:hyp-potential-1}, \eqref{eq:hyp-potential-2} hold, then
    \begin{equation}
    \label{eq:convergence}
    \lVert u - \ud\rVert_\mathrm{DG} \to 0, \qquad \text{as }\delta\to 0.
  \end{equation}
 In particular, there exists a $\delta_0>0$ such that we have the quasi-optimal convergence
    \begin{equation}
    \label{eq:optimality}
    \lVert  u -\ud \rVert_\mathrm{DG} \lesssim \inf_{v_\delta\in X_\delta}\fullnorm{u-\vd}\qquad \forall\delta\leq\delta_0.
  \end{equation}
  Furthermore, 
  \begin{equation}
    \label{eq:lambda}
|\lambda -\ld| \lesssim \inf_{v_\delta\in X_\delta}\fullnorm{u-\vd},   \qquad \forall\delta\leq\delta_0
  \end{equation}
  and
  \begin{equation}
    \label{eq:l2}
  \| u - \ud\| \lesssim
  \| u - \uds\|^r_{L^{6r/(5-s)}}  +\| u - \ud\|^r_{L^{6r/(5-s)}}  +  \| u-\uds\|,\qquad \forall\delta\leq\delta_0,
 \end{equation}
 where $r$ is defined in \eqref{eq:cond4} and $\uds$ is the solution of the
 linear eigenvalue problem defined in \eqref{eq:uds-def}.
\end{theorem}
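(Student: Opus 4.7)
The overall strategy is to introduce the auxiliary function $\uds\in\Xd$ from \eqref{eq:uds-def}, which is the SIP--dG approximation of $u$ viewed as a solution of the \emph{linear} eigenproblem with the nonlinearity frozen at $u$. Since $u-\uds$ is already controlled by the linear theory \eqref{eq:convergence-linear}, every estimate is reduced to a calculation inside $\Xd$, where the semi-coercivity and continuity properties of Lemma~\ref{lemma:estimates} are available for the nonlinear correction $\ud-\uds\in\Xd$.

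For the basic convergence \eqref{eq:convergence}, I first use the energy-minimality of $\ud$ against a quasi-interpolant of $u$ in $\Xd$, together with \eqref{eq:discrete-embedding}, to obtain a uniform bound on $\dgnorm{\ud}$ and then extract a subsequence converging strongly in $L^2(\Omega)$. Identifying the limit as a unit-norm minimizer of $E$ via \eqref{eq:cond2}--\eqref{eq:cond3} and invoking uniqueness (up to sign) of the continuous ground state force the entire sequence to converge in the DG norm by energy consistency. For the quasi-optimality \eqref{eq:optimality}, I apply \eqref{eq:estimates-lemma2} to $\ud-\uds\in\Xd$ and substitute the defining relations $A_\delta^u\uds=\lds\uds$ and $A_\delta^{\ud}\ud=\ld\ud$ to rewrite the right-hand side as
\begin{equation*}
(\ld-\lds)(\ud,\ud-\uds) + \int_\Omega\bigl(f(u^2)-f(\ud^2)\bigr)\,\ud\,(\ud-\uds).
\end{equation*}
The normalization $\lVert\ud\rVert=\lVert\uds\rVert=1$ turns the first term into $\tfrac12(\ld-\lds)\lVert\ud-\uds\rVert^2$, which is higher order; the second term is controlled by hypothesis \eqref{eq:cond4}, H\"older's inequality, and the discrete Sobolev embedding \eqref{eq:discrete-embedding}, giving a bound of the form $C\,\dgnorm{u-\ud}^{r}\dgnorm{\ud-\uds}$. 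Splitting $\dgnorm{u-\ud}\leq\dgnorm{u-\uds}+\dgnorm{\ud-\uds}$, using $r>1$, and invoking \eqref{eq:convergence} to make $\dgnorm{\ud-\uds}^{r-1}$ small allows the super-linear self-coupling to be absorbed on the left, yielding $\dgnorm{\ud-\uds}\lesssim\dgnorm{u-\uds}$; a triangle inequality and \eqref{eq:convergence-linear} then produce \eqref{eq:optimality}.

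The eigenvalue bound \eqref{eq:lambda} follows by testing the exact and discrete equations against their own eigenfunctions to write $\lambda=a(u,u)+\int_\Omega f(u^2)u^2$ and $\ld=\ad(\ud,\ud)+\int_\Omega f(\ud^2)\ud^2$; since $u$ is conforming, $\ad(u,u)=a(u,u)$, so $\ld-\lambda$ expands by bilinearity into cross-terms linear in $\ud-u$ that are bounded via the continuity \eqref{eq:estimates-lemma1b} (and its extension to $X(\delta)$) together with \eqref{eq:cond4}. For \eqref{eq:l2}, I apply Lemma~\ref{lemma:adjoint} with $\wdel=(\ud-\uds)/\lVert\ud-\uds\rVert$, so that the left-hand side of \eqref{eq:scalar-prod-estim} equals $\lVert\ud-\uds\rVert$, and I control $\dgnorm{\psiwd}$ and $\lVert\psiwd\rVert$ by $\lVert\wdel\rVert=1$ through \eqref{eq:estimates-lemma3a}. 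Terms quadratic in $\lVert\ud-\uds\rVert$ and the one containing $\lvert\ld-\lds\rvert$ are absorbed on the left, while the remaining pieces are rewritten via a triangle inequality as sums involving $L^{6r/(5-s)}$ norms of $u-\uds$ and $u-\ud$ plus $\lVert u-\uds\rVert$, matching \eqref{eq:l2}.

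The main obstacle is the bootstrap implicit in the nonlinear term: the coercivity argument produces a bound on $\dgnorm{\ud-\uds}$ that contains $\dgnorm{u-\ud}^{r}$, which in turn couples back to $\dgnorm{\ud-\uds}$ itself. This super-linear self-coupling closes only once one knows a priori that $\dgnorm{u-\ud}$ is small, which is precisely why the compactness-based convergence \eqref{eq:convergence} must be established first and why the remaining estimates \eqref{eq:optimality}, \eqref{eq:lambda}, and \eqref{eq:l2} are derived in that order.
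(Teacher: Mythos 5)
Your decomposition $u-\ud=(u-\uds)+(\uds-\ud)$ and the reduction to discrete estimates on $\ud-\uds$ are exactly the paper's strategy, and your treatments of \eqref{eq:lambda} and \eqref{eq:l2} are close variants of the paper's (for \eqref{eq:l2} it is the same argument up to rescaling $\wdel$). There is, however, a genuine gap in your argument for \eqref{eq:optimality}. You invoke the coercivity \eqref{eq:estimates-lemma2} of $A^u_\delta-\lds$, which after the algebraic manipulation gives
\begin{equation*}
\dgnorm{\ud-\uds}^2 \lesssim (\ld-\lds)(\ud,\ud-\uds)+\int_\Omega\bigl(f(u^2)-f(\ud^2)\bigr)\ud\,(\ud-\uds),
\end{equation*}
and you then claim that the integral is bounded by $C\dgnorm{u-\ud}^r\dgnorm{\ud-\uds}$ ``by hypothesis \eqref{eq:cond4}.'' But \eqref{eq:cond4} controls the quantity $f(u^2)\ud-f(\ud^2)\ud+2f'(u^2)u^2(\ud-u)$, not $f(u^2)\ud-f(\ud^2)\ud$ alone; to invoke it you must add and subtract $2f'(u^2)u^2(\ud-u)(\ud-\uds)$, and the residual $-2\int_\Omega f'(u^2)u^2(\ud-u)(\ud-\uds)$ is bounded (via \eqref{eq:cond3}) only by $C\|u-\ud\|\,\|\ud-\uds\|$, a term \emph{linear} in the unknown error $\|u-\ud\|$ that cannot be absorbed on the left: after a triangle inequality it produces an $O(\|\ud-\uds\|^2)$ contribution with an $O(1)$ constant. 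The paper avoids this precisely by using the coercivity \eqref{eq:estimates-lemma3a} of $E_\delta''(u)-\lds$, whose extra term $2\int f'(u^2)u^2(\ud-\uds)^2$ supplies (after rewriting $\ud-\uds=(\ud-u)+(u-\uds)$) exactly the $f'$ correction needed to assemble the full \eqref{eq:cond4} combination, leaving a residual $\|u-\uds\|\|\ud-\uds\|$ involving only the controllable \emph{linear} error $\|u-\uds\|$. You should replace \eqref{eq:estimates-lemma2} by \eqref{eq:estimates-lemma3a} at this step.

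A smaller remark: for \eqref{eq:convergence}, you propose a weak-compactness argument (uniform bound, subsequence extraction, limit identification, uniqueness). The paper instead works directly with the energy gap, writing $2(E_\delta(\ud)-E(u))=\langle(A^u_\delta-\lds)(\ud-\uds),\ud-\uds\rangle-\lambda+\lds+\int_\Omega(F(\ud^2)-F(u^2)-f(u^2)(\ud^2-u^2))$, using convexity of $F$ and the energy-minimizing property to deduce $\dgnorm{\ud-\uds}^2\lesssim E_\delta(\Pi_\delta u)-E_\delta(u)+|\lambda-\lds|\to 0$. The paper's argument is more direct and quantitative, and it is needed to make the absorption step rigorous; your compactness route would not by itself certify the smallness of $\dgnorm{u-\ud}$ that you invoke later. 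For \eqref{eq:lambda}, your direct expansion $\ld-\lambda=\ad(\ud-u,\ud+u)+\dots$ is a reasonable alternative to the paper's route through $\lds$, but you would need to split $\ad(\ud-u,\ud+u)=\ad(\ud-u,\ud-u)+2\ad(\ud-u,u)$ and justify the continuity carefully, since $a_\delta$ is only stated to extend to $X(\delta)\times\Xd$, not $X(\delta)\times X(\delta)$.
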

\begin{proof}
We start by proving \eqref{eq:convergence}, i.e.~the convergence of the
numerical solution towards the exact one. We have
 \begin{align*}
     2\left( E_\delta(u_\delta)- E(u) \right)&= \langle A_\delta^u u_\delta, u_\delta \rangle -  \langle A^u u, u\rangle + \int_\Omega \left( F(u_\delta^2)-F(u^2)-f(u^2)(u_\delta^2-u^2) \right) \\
                           & =  \langle \left( A_\delta^u -\lambda_\delta^* \right) (u_\delta - u_\delta^*), u_\delta - u_\delta^* \rangle - \lambda+\lambda_\delta^*\\ & \quad
+ \int_\Omega \left( F(u_\delta^2)-F(u^2)-f(u^2)(u_\delta^2-u^2) \right) \\
                            & \gtrsim \lVert u_\delta - u_\delta^* \rVert^2_\mathrm{DG} - |\lambda -\lambda_\delta^*| + \int_\Omega \left( F(u_\delta^2)-F(u^2)-f(u^2)(u_\delta^2-u^2) \right).
  \end{align*} 
Therefore, exploiting the convexity of $F$ and the convergence of $\lambda$ towards $\lambda_\delta^*$, we have that
\begin{equation}
\begin{aligned}
  \label{eq:norm-control}
 \lVert u_\delta - u_\delta^* \rVert^2_\mathrm{DG} &\lesssim  E_\delta(u_\delta)- E(u) + |\lambda -\lambda_\delta^*| \\ &  \leq E_\delta(\Pi_\delta u) - E_\delta(u) +  |\lambda -\lambda_\delta^*| \to 0.
\end{aligned}
\end{equation} 
Considering that $u_\delta^*$ converges towards $u$ in the DG norm, \eqref{eq:norm-control} implies \eqref{eq:convergence}.
Note then that
\begin{equation}
  \label{eq:l-lds-1}
  \begin{aligned}
  \lambda_\delta - \lds &= \langle A^{u}_\delta u_\delta, u_\delta \rangle - \lds + \int_\Omega \left[ f(u_\delta^2)-f(u^2)\right]u_\delta^2\\
                                 &=  \langle \left( A^{u}_\delta - \lds\right) \left( u_\delta-\uds\right), u_\delta -\uds \rangle  + \int_\Omega\left[ f(u_\delta^2)-f(u^2)\right] u_\delta^2.
  \end{aligned}
\end{equation} 
Remarking, as in the proof of Theorem 1 of Ref.~\refcite{Cances2010}, that
\begin{equation*}
  \int_\Omega\left[ f(u_\delta^2)-f(u^2)\right] u_\delta^2 \leq \|1+\ud^{2q+1}\|_{L^{6/(2q+1)}(\Omega)} \dgnorm{u-\ud}
\end{equation*}
and using \eqref{eq:discrete-embedding} and \eqref{eq:convergence}, we can conclude that
\begin{equation}
\label{eq:temp-lambda}
  | \lambda -\ld| \lesssim |\lambda - \lds| + \dgnorm{\ud - \uds}^2 + \dgnorm{u-\ud}.
\end{equation}
Now, from \eqref{eq:estimates-lemma3a} we have
\begin{align*}
  \dgnorm{\ud- \uds}^2 
&\lesssim \langle \left( E_\delta''(u)-\lds \right) (\ud-\uds), \ud-\uds\rangle \\
&=  \langle \left( \Ad^u-\lds \right) (\ud-\uds), \ud-\uds\rangle + 2\int_\Omega f'(u^2)u^2(\ud -\uds)^2 \\
&=  \langle \left( \Ad^u-\ld \right) \ud, \ud-\uds\rangle + (\ld -\lds)\|\ud -\uds\|^2+2\int_\Omega f'(u^2)u^2(\ud -\uds)^2 \\
  & = \int_\Omega \left[ \left( f(u^2) - f(\ud^2)  \right)\ud +2f'(u^2)u^2(\ud-\uds)\right] (\ud-\uds)
    \\ & \qquad + (\ld-\lds) \|\ud -\uds\|^2.
\end{align*}
Consider the first term: hypothesis \eqref{eq:cond3} gives
\begin{equation*}
  \int_\Omega f'(u^2)u^2(\ud-\uds)^2 \lesssim \int_\Omega f'(u^2)u^2(\ud - u)(\ud-\uds) +  \| u -\uds\| \| \ud-\uds\|.
\end{equation*}
The two above equations and \eqref{eq:cond4} thus give
\begin{multline*}
  \dgnorm{\ud- \uds}^2  \lesssim  \|1+ |\ud|^s\|_{L^{6/s}(\Omega)}\|\ud-u
  \|_{L^{6r/(5-s)}(\Omega)}^r\dgnorm{\ud -\uds} \\
  + |\ld-\lds| \| \ud - \uds\|^2
  + \| u - \uds \| \|\ud - \uds\|
\end{multline*}
and, since $r>1$ and $6r/(5-s)\leq 6$, we can conclude that
\begin{equation*}
  \dgnorm{u-\ud} \lesssim \dgnorm{u-\uds}.
\end{equation*}
The quasi optimality of $\uds$ then
implies \eqref{eq:optimality}. Additionally, we can use this estimate in
\eqref{eq:temp-lambda} and, considering that
\begin{equation*}
  |\lambda -\lds|  \lesssim \fullnorm{u-\uds}^2\lesssim \inf_{\vd \in \Xd} \fullnorm{u-\vd}^2,
\end{equation*}
we conclude that
\begin{equation*}
  |\lambda -\ld| \lesssim \inf_{\vd \in \Xd}\fullnorm{u-\vd}.
\end{equation*}
Note that this result can be a bit sharper if $q$ in $\eqref{eq:cond2}$ is
significantly smaller than $2$; we write it this way for ease of reading. As
already mentioned, we
will prove a sharper result under some additional conditions in the following sections.

We finish by showing the estimate for the $L^2$ norm of the error. This follows
from Lemma \ref{lemma:adjoint}, since \eqref{eq:scalar-prod-estim} implies
\begin{multline}
  \label{eq:L2-1}
  \| \ud - \uds\|^2 \lesssim
  \| \ud - \uds\|^r_{L^{6r/(5-s)}} \dgnorm{\psi_{\ud-\uds}} + | \lambda_\delta - \lds| \| u_\delta - \uds \| \|\psi_{\ud-\uds} \| 
  \\   + \| u-\uds\|\|\psi_{\ud-\uds}\|
   + \| \ud - \uds \|^2 \|\psi_{\ud-\uds}\| 
  + \| \ud - \uds \|^3 
\end{multline}
for $\psi_{\ud-\uds}\in \Xd$ defined as in \eqref{eq:nonlin-adjoint}, with $\wdel =
\ud-\uds$. Now, the coercivity of $\langle (E''(u)-\lds)\cdot ,\cdot\rangle$
over $\Xd$ shown in \eqref{eq:estimates-lemma3a} and a Cauchy-Schwarz inequality imply
\begin{equation}
  \label{eq:psi-stab}
  \dgnorm{\psi_{\ud-\uds}} \lesssim \| \ud -\uds\|.
\end{equation}
Hence, from the combination of \eqref{eq:L2-1}, \eqref{eq:psi-stab}, 
and the convergences of $\ld$ towards $\lds$ and of $\ud$ towards $\uds$ in the
$L^2(\Omega)$ norm, we derive
\begin{equation*}
  \| \ud - \uds\| \lesssim
  \| \ud - \uds\|^r_{L^{6r/(5-s)}}  +  \| u-\uds\|.
\end{equation*}
Noting that
\begin{equation*}
  \| u -\ud\| \leq \| u -\uds\|+\|\ud-\uds\|
\end{equation*}
we conclude the proof.
\end{proof}

\section{Weighted analytic regularity for polynomial nonlinearities}
\label{chap:nonlinear-regularity}
This section is centered on the proof of analytic-type estimates on
the norms of the solution to nonlinear elliptic problems.  Specifically, we
consider the nonlinear Schr\"{o}dinger equation and prove
that, under some conditions on the coefficients of the operator, the solution
belongs to $\mathcal{J}^{\varpi}_\gamma(\Omega)$, for the same $\gamma$ as in
the linear case seen in Ref.~\refcite{linear}. Since the singularities we
consider are internal to the domain, we suppose that the domain $\Omega$ is a
compact domain without boundary, e.g., $\Omega = (-1,1)^d/2\mathbb{Z}$.
The extension of the theory to the case of a bounded domain with smooth boundary
can be done using the classical tools used in the analysis of elliptic problems
in Sobolev spaces, as long as $r_{|_{\partial \Omega}} \simeq 1$, i.e., the
singularity is bounded away from the boundary.

First, in Section \ref{sec:localelliptic} we prove the local elliptic estimate in weighted Sobolev spaces that will allow for the derivation of the bounds on higher order derivatives from those obtained on lower order ones.  Then, in order to estimate the norms of the nonlinear terms, we follow the proof technique used in Ref.~\refcite{DallAcqua2012}. 
The idea is to proceed by induction and to consider $L^p$ norms in nested balls
and with a big enough $p$. Let $L_{\mathrm{lin}}$ be an elliptic linear operator
and consider the operator $L_u u  = -L_{\mathrm{lin}} u + |u|^{n-1}u$,
where $n=2,3,4$: the $L^p$ norms of the nonlinear terms can then be broken
up into products of $L^{n p}$ norms by a Cauchy-Schwarz inequality.
In order to get back to $L^p$ norms, in Ref.~\refcite{DallAcqua2012} the authors use an
interpolation inequality where $L^{n p}$ is contained in an interpolation
space between $L^p$ and $W^{1,p}$. Since in our case we need to deal with weighted spaces, in Section \ref{sec:thetaprod} we derive the weighted version of this inequality, via a dyadic decomposition of the domain near the singular points.

The proof of the analytic bound on a nonlinear scalar elliptic eigenvalue problem is then given in Section \ref{sec:nonlinsch-analytic}, in the case of the nonlinear Schr\"{o}dinger equation up to a quartic nonlinear term. Starting from a basic regularity assumption, we are able to treat the potential and the nonlinear term thanks to the results presented in the preceding sections.

We suppose, for the sake of simplicity, the presence of a single point
  singularity, i.e., that $\fC = \{\fc\}$. For any $L>0$, we indicate by
    $B_L = B_L(\fc)$ the ball of radius $L$ centered at $\fc$.

We denote the commutator by square brackets, i.e., we write
\begin{equation*}
  \left[ A, B \right] = AB-BA.
\end{equation*}

\subsection{Local elliptic estimate}
\label{sec:localelliptic}
We start by proving a local seminorm estimate in weighted Sobolev spaces. This has been already established in Ref.~\refcite{Costabel2012a}, as an intermediate estimate leading to the proof of another regularity result. We restate it here fully, in the specific form that will be needed in the sequel. The goal is to control the weighted norm of a higher order derivative of a function with the weighted norm of its Laplacian and of lower order derivatives in a bigger domain, while giving an explicit dependence of the constants on the distance between the domains.
\begin{proposition}
  \label{lemma:elliptic-weighted}
  Let $1<p<\infty$, $R>0$ such that $B_R \subset \Omega$, and 
  let $\gamma \in \mathbb{R}$. There exists $\Creg>0$ such that
  for all $k\in\mathbb{N}$, all $\rho\in(0, \frac{R}{2(k+1)})$, all $j\in \mathbb{N}$ such that
  $1\leq j \leq k$, and all $u$ with bounded $|\cdot|_{\mathcal{K}^{i,
        p}_\gamma(B_{R-j\rho})}$, $i=k-1, k, k+1$
    seminorms,
  \begin{multline}
    \label{eq:elliptic-weighted}
    \sum_{|\alpha| = k+1} \| r^{k+1-\gamma} \dalpha u\|_{L^p(B_{R-(j+1)\rho})}
    \leq \Creg\left( \sum_{|\beta| = k-1} \| r^{k+1-\gamma}\dbeta(\Delta u)\|_{L^p(B_{R-j\rho})} \right.\\ \left.
+ \sum_{|\alpha| = k}\rho^{-1}\|r^{|\alpha|-\gamma}\dalpha u\|_{L^p(B_{R-j\rho})}
+ \sum_{|\alpha| = k-1}\rho^{-2}\|r^{|\alpha|-\gamma}\dalpha u\|_{L^p(B_{R-j\rho})}  \right).
  \end{multline}
\end{proposition}
In order to prove this Proposition we introduce, for each $j\in \mathbb{N}$, a smooth cutoff function  
$\psi_{j}\in C^\infty_0(B_{R-j\rho})$ such that 
\begin{equation}
  \label{eq:psidef}
  \begin{gathered}
  0\leq \psi_{j} \leq 1, \qquad \psi_{j}=1 \text{ on }B_{R-(j+1)\rho},\\
  \exists C_{\psi} > 0 \text{ such that }\left| \dalpha\psi_{j}\right|\leq C_{\psi} \rho^{-|\alpha|}, \quad \forall \alpha \in \mathbb{N}_{0}^d,
\alpham \leq 2,
  \end{gathered}
\end{equation}
for all $\rho \in (0, R/(j+1))$ and with $C_\psi$ independent of $j$. Furthermore, we derive an auxiliary estimate.
\begin{lemma}
  \label{lemma:commutator1}
  Let $1<p<\infty$, $R>0$ such that $B_R\subset \Omega$, and $\gamma \in \mathbb{R}$ .
There exists $\Ccomm>0$ such that, for all
  $\beta \in \mathbb{N}^d_{0}$, for all $\rho\in(0,
  \frac{R}{2(|\beta|+2)}]$, for all $j\in \mathbb{N}$ such that $1 \leq j \leq
  |\beta|+1$, and for all $u$ such that $|u|_{\mathcal{K}^{i,
      p}_{\gamma}(B_{R-j\rho})} <\infty$, $i=\betam, \betam+1$,
  \begin{equation}
    \label{eq:commutator1}
    \sum_{|\alpha|=2}\|\left[ \dalpha, r^{|\beta|+2-\gamma}\right] \psi_j \dbeta u\|_{L^p(B_{R-j\rho})}\leq 
\Ccomm \sum_{|\alpha| \leq 1} \rho^{-2+|\alpha|}\| r^{|\beta|+|\alpha|-\gamma} \dab u\|_{L^p(B_{R-j\rho})},
  \end{equation}
  and $\Ccomm$ depends only on $\gamma$ and $R$.

\end{lemma}
\begin{proof}
  First, let us fix $i,k\in \{1, \dots, d\}$ such that $\dalpha =
  \dfrac{\partial^2}{\partial x_i \partial x_k} = \partial_{ik}=
  \partial_i\partial_k$.
  Then,
  \begin{multline*}
      \left[ \dalpha, r^{|\beta|+2-\gamma}\right] \psi_{j} \dbeta u= 
\left( \partial_{ik} r^{|\beta|+2-\gamma}\right) \psi_{j} \dbeta u
  +\left( \partial_{i} r^{|\beta|+2-\gamma}\right) \partial_k\left(\psi_{j} \dbeta u\right)\\
  +\left( \partial_{k} r^{|\beta|+2-\gamma}\right) \partial_i\left(\psi_{j} \dbeta u\right)
    \end{multline*}
Writing $(\star) = \|\left[ \dalpha,
    r^{|\beta|+2-\gamma}\right] \psi_{j} \dbeta u\|_{L^p(B_{R-j\rho})}$, we have that
  \begin{align*}
    (\star)
    &\leq
      \|\left( \partial_{ik} r^{|\beta|+2-\gamma}\right) \psi_{j} \dbeta u\|_{L^p(B_{R-j\rho})} 
      +\|\left( \partial_{i} r^{|\beta|+2-\gamma}\right) \partial_k\left(\psi_{j} \dbeta u\right)\|_{L^p(B_{R-j\rho})}
    \\
    &\qquad +\|\left( \partial_k r^{|\beta|+2-\gamma}\right) \partial_i\left(\psi_{j} \dbeta u\right)\|_{L^p(B_{R-j\rho})}
    \\
    &\leq  
      {\big|\betam + 2 -\gamma\big|\bigg(\left(\big|\betam-\gamma\big|+1\right)} \| r^{|\beta|-\gamma}\psi_{j} \dbeta u\|_{L^p(B_{R-j\rho})}\\
    &\qquad
      +  \| r^{|\beta|+1-\gamma}\partial_k\left(\psi_{j} \dbeta u\right)\|_{L^p(B_{R-j\rho})}+ \|  r^{|\beta|+1-\gamma} \partial_i\left(\psi_{j} \dbeta u\right)\|_{L^p(B_{R-j\rho})} \bigg)
      \\
&\leq  
      (2+|\gamma|)^2\bigg(\left(\betam+1\right)^2 \| r^{|\beta|-\gamma}\psi_{j} \dbeta u\|_{L^p(B_{R-j\rho})}\\
    &\qquad
      + (\betam+1) \left(\| r^{|\beta|+1-\gamma}\partial_k\left(\psi_{j} \dbeta u\right)\|_{L^p(B_{R-j\rho})}+ \|  r^{|\beta|+1-\gamma} \partial_i\left(\psi_{j} \dbeta u\right)\|_{L^p(B_{R-j\rho})}   \right)\bigg).
  \end{align*}
  Now, for $\iota\in\{i, k\}$,
  \begin{align*}
 &  \| r^{|\beta|+1-\gamma}\partial_{\iota}\left(\psi_{j} \dbeta u\right)\|_{L^p(B_{R-j\rho})}
     \\  &\qquad \qquad\leq
  \| r^{|\beta|+1-\gamma}\psi_{j} \partial_{\iota}\dbeta u\|_{L^p(B_{R-j\rho})}
 + \| r^{|\beta|+1-\gamma}\left(\partial_{\iota} \psi_{j}\right)\dbeta u\|_{L^p(B_{R-j\rho})}  
    \\ & \qquad \qquad \leq
  \| r^{|\beta|+1-\gamma}\partial_{\iota}\dbeta u\|_{L^p(B_{R-j\rho})}
 + C_\psi R \rho^{-1}\| r^{|\beta|-\gamma}\dbeta u\|_{L^p(B_{R-j\rho})}
  \end{align*}
  were we have also used the definition of $\psi_{j}$ given in \eqref{eq:psidef}. We obtain
  \begin{multline*}
    (\star) \leq
    (2+|\gamma|)^2
  \left( (|\beta|+1)^2+(|\beta|+1)2C_\psi R\rho^{-1}\right) \| r^{|\beta|-\gamma} \dbeta u\|_{L^p(B_{R-j\rho})}
    \\ 
         +  \sum_{\iota \in \{i, k\}}(|\beta|+1)\| r^{|\beta|+1-\gamma}\partial^{\beta}\partial_{\iota} u\|_{L^p(B_{R-j\rho})}.
  \end{multline*}
  Summing over all multi indices $|\alpha|=2$, it follows that there exists a
  constant $C>0$ that depends only on $\gamma$ and $R$ such that
  \begin{align*}
    \sum_{|\alpha| = 2}\|\left[ \dalpha, r^{|\beta|+2-\gamma}\right] \psi_{j} \dbeta u\|_{L^p(B_{R-j\rho})}
    &\leq
  C\bigg(  \left( (|\beta|+1)^2+(|\beta|+1)\rho^{-1}\right) \| r^{|\beta|-\gamma}\dbeta u\|_{L^p(B_{R-j\rho})}\\
 & \qquad +  \sum_{|\alpha|=1} (|\beta|+1)\| r^{|\beta|+1-\gamma}\partial^{\beta+\alpha} u\|_{L^p(B_{R-j\rho})} \bigg).
  \end{align*}
  Since $\rho\in(0, \frac{R}{2(|\beta|+2)}]$ implies $|\beta|+1\leq R \rho^{-1}$, we can conclude with \eqref{eq:commutator1}.
\end{proof}
We can now prove estimate \eqref{eq:elliptic-weighted}.
\begin{proof}(Proof of Proposition \ref{lemma:elliptic-weighted})
 Let us consider a multi index $\beta\in \mathbb{N}^d_0$ such that $\betam
 = k-1$.
 First,
 \begin{multline}
   \label{eq:ellipticreg1}
   \sum_{|\alpha|=2} \|r^{|\beta|+2-\gamma }\dab u\|_{L^p(B_{R-(j+1)\rho})} \leq 
\sum_{|\alpha|=2}\left\{ \|\dalpha \left(r^{|\beta|+2-\gamma} \dbeta u\right)\|_{L^p(B_{R-(j+1)\rho}) } \right. \\ 
\left.+ \|\left[\dalpha, r^{|\beta|+2-\gamma} \right]\dbeta u\|_{L^p(B_{R-(j+1)\rho}) }\right\}.
 \end{multline}
 We consider the first term at the right hand side: using \eqref{eq:psidef}
 \begin{equation*}
\sum_{|\alpha|=2} \|\dalpha \left(r^{|\beta|+2-\gamma} \dbeta u\right)\|_{L^p(B_{R-(j+1)\rho}) } \leq 
\sum_{|\alpha|=2} \|\dalpha \left(r^{|\beta|+2-\gamma} \psi_{j}\dbeta u\right)\|_{L^p(B_{R-j\rho}) }.
 \end{equation*}
 By elliptic regularity and using the triangle inequality, since, for all
   $j\in \mathbb{N}$ and for all $\beta\in\mathbb{N}^d_0$,
   $\psi_{j}\dbeta u $ has compact support in $B_R$, there exists a
   constant $C_\Delta>0$ that depends only on $p$ and $R$ such that
\begin{align*}
&\sum_{|\alpha|=2} \|\dalpha \left(r^{|\beta|+2-\gamma} \psi_{j}\dbeta u  \right)\|_{L^p(B_{R-j\rho}) }
                 \\
&\qquad \leq
C_{\Delta}  \|\Delta \left(r^{|\beta|+2-\gamma} \psi_{j}\dbeta u\right)\|_{L^p(B_{R-j\rho}) }\\
&\qquad \leq 
C_{\Delta}\bigg(  \|r^{|\beta|+2-\gamma} \psi_{j}\Delta \dbeta u\|_{L^p(B_{R-j\rho}) } 
+ 
                                                                                                   \|\left[\Delta,r^{|\beta|+2-\gamma}\right] \psi_{j}\dbeta u\|_{L^p(B_{R-j\rho}) } \\
  &\qquad \qquad 
+
\|r^{|\beta|+2-\gamma}\left[\Delta,\psi_{j}\right] \dbeta u\|_{L^p(B_{R-j\rho}) } \bigg).
\end{align*}
Combining the last inequality with \eqref{eq:ellipticreg1} we obtain
\begin{align*}
&   \sum_{|\alpha|=2} \|r^{|\beta|+2-\gamma }\dab u\|_{L^p(B_{R-(j+1)\rho})} 
 \leq 
  C_\Delta\|r^{|\beta|+2-\gamma} \psi_{j}\dbeta\left(\Delta u\right)\|_{L^p(B_{R-j\rho}) } 
   \\ & \qquad \qquad
   +C_\Delta\sum_{i=1}^d\left\{ \| r^{|\beta|+2-\gamma} \left(\partial_{ii}\psi_{j}\right) \dbeta u\|_{L^p(B_{R-j\rho}) } \right.
 \left.+ \| r^{|\beta|+2-\gamma} \left(\partial_{i}\psi_{j}\right) \partial^{\beta}\partial_i u\|_{L^p(B_{R-j\rho}) } \right\} 
   \\ & \qquad \qquad
+ (1+C_\Delta)\sum_{|\alpha|=2}\|\left[\dalpha, r^{|\beta|+2-\gamma} \right]\psi_{j}\dbeta u\|_{L^p(B_{R-j\rho}) }.
\end{align*}
The bounds on the derivatives of $\psi_{j}$ given in \eqref{eq:psidef} and \eqref{eq:commutator1} then give
\begin{align*}
  & \sum_{|\alpha|=2} \|r^{|\beta|+2-\gamma }\dab u\|_{L^p(B_{R-(j+1)\rho})} 
  \leq
  C_\Delta\|r^{|\beta|+2-\gamma} \psi_{j}\dbeta\left(\Delta u\right)\|_{L^p(B_{R-j\rho}) } \\
  &\qquad \qquad + C_\Delta C_\psi
    \sum_{|\alpha|\leq 1} \rho^{-2+|\alpha|}R^{2-\alpham}d \|r^{|\beta|+|\alpha| -\gamma} \dab u\|_{L^p(B_{R-j\rho})}.
    \\ & \qquad \qquad + (1+C_{\Delta})\Ccomm
    \sum_{|\alpha|\leq 1} \rho^{-2+|\alpha|} \|r^{|\beta|+|\alpha| -\gamma} \dab u\|_{L^p(B_{R-j\rho})},
\end{align*}
where $\Ccomm$ is the constant introduced in Lemma \ref{lemma:commutator1} and
  is independent of $\beta$, $u$, $\rho$, and $j$.
We can now sum over all multi indices $\beta$ such that $|\beta| = k-1$ to obtain the thesis \eqref{eq:elliptic-weighted}.
\end{proof}
\subsection{Weighted interpolation estimate}
  \label{sec:thetaprod}
\begin{lemma}
  \label{lemma:thetaprod}
  Let $0 < R_0 < R_1$,  $m>1$, $\gamma-d/p >
  2/(1-m)$, and $p\geq d(1-1/m)$. Then, the following
  ``interpolation'' estimate holds: there exists $\Cinterp>0$ such that
for all $R_0\leq D \leq R_1$, for all $\beta\in \mathbb{N}_0^d$, and for all $u$ with bounded
    $|\cdot|_{\mathcal{K}^{i}_{\gamma}(B_D)}$, $i = \betam, \betam+1$ seminorms,
  \begin{multline}
  \label{eq:thetaprod}
  \|r^{\frac{2-\gamma}{m}+|\beta|}\dbeta u \|_{L^{m p}(B_D)} \leq \Cinterp\|r^{\betam-\gamma} \dbeta u\|^{1-\theta}_{L^p(B_D)}
\left\{ \vphantom{\sum_{i=1}^d} (\betam+1)^\theta \| r^{\betam-\gamma}\dbeta u\|^\theta_{L^p(B_D)} \right. \\ 
\left.+ \sum_{\alpham=1} \| r^{\betam+1-\gamma} \partial^{\beta+\alpha} u \|^\theta_{L^p(B_D)}\right\},
  \end{multline}
   with $\theta = \frac{d}{p}\left(1-\frac{1}{m}\right)$.
\end{lemma}
\begin{proof}
  We start by proving \eqref{eq:thetaprod} with $D=1$.
  Consider a dyadic decomposition of $B_1$ given by the sets 
  \begin{equation*}
    B^j = \left\{ x\in B_1 : 2^{-j} \leq r(x)\leq 2^{-j+1} \right\}, \; j\in \mathbb{N}.
  \end{equation*}
   Let us introduce the
   linear maps $\phi_j : B^1\to B^j$ and indicate the pullback of functions
   by $\phi_j^{-1}$ as, e.g., $\hr = r\circ\phi_j^{-1}$ and $\hB = \phi_j^{-1}(B^j)$. Then, 
   \begin{equation*}
     \|r^{\frac{2-\gamma}{m}+\betam} \dbeta u\|_{L^{m p}(B^j)} \leq 
     2^{\frac{j}{m}(\gamma-2-d/p)}
     \|\hr^{\frac{2-\gamma}{m}+\betam} \hat{\partial}^\beta \hat{u}\|_{L^{m p}(\hB)} 
   \end{equation*}
   Now, for any set $S\subset\mathbb{R}^d$ satisfying the cone
     condition, for all $m\geq 1$, and all $1<p<\infty$, there exists $\CSinterp> 0$ such that, for all  $v\in W^{1, p}(S)$,
   \begin{equation*}
     \| v\|_{L^{m p}(S)} \leq \CSinterp \|v\|^{1-\theta}_{L^p(S)} \|v\|^\theta_{W^{1, p}(S)},
   \end{equation*}
   with $\theta$ defined as
   above, see Ref.~\refcite{DallAcqua2012}. Therefore,
   \begin{equation}
     \label{eq:interp-ref}
     \begin{multlined}[][\arraycolsep]
     \|r^{\frac{2-\gamma}{m}+\betam} \dbeta u\|_{L^{m p}(B^j)} 
      \\ \leq \CBinterp
     2^{\frac{j}{m}(\gamma-2-d/p)}
     \|\hr^{\frac{2-\gamma}{m}+\betam} \hat{\partial}^\beta \hat{u}\|^{1-\theta}_{L^{p}(\hB)} 
      \sum_{\alpham = 1} \|\hat{\partial}^\alpha \hr^{\frac{2-\gamma}{m}+\betam}\hat{\partial}^\beta \hat{u}\|^\theta_{L^p(\hB)}.
     \end{multlined}
   \end{equation}
   Let us now consider the first norm in the product above. Since $\hr :\hB\to
   (1/2,1)$, there holds
   \begin{equation*}
     \hr^{\frac{2-\gamma}{m}}\leq \max(1, 2^{\frac{2}{m} +|\gamma|(1-\frac{1}{m}) }) \hr^{-\gamma},
   \end{equation*}
   therefore,
     \begin{equation*}
     \|\hr^{\frac{2-\gamma}{m}+\betam} \hat{\partial}^\beta \hat{u}\|^{1-\theta}_{L^{p}(\hB)}  \leq  \max(1, 2^{\frac{2}{m} +|\gamma|(1-\frac{1}{m}) }) \|\hr^{\betam-\gamma} \hat{\partial}^\beta \hat{u}\|^{1-\theta}_{L^{p}(\hB)} .
\end{equation*}
We now compute more explicitly the second norm in the product in \eqref{eq:interp-ref}:
\begin{multline*}
      \sum_{\alpham = 1} \|\hat{\partial}^\alpha \hr^{\frac{2-\gamma}{m}+\betam}\hat{\partial}^\beta \hat{u}\|^\theta_{L^p(\hB) }\\
  \leq \left( \betam + \frac{2-\gamma}{m}\right)^\theta \|\hr^{\frac{2-\gamma}{m}+\betam-1} \hat{\partial}^\beta \hat{u}\|^\theta_{L^p(\hB) } 
+ \sum_{\alpham=1} \|\hr^{\frac{2-\gamma}{m}+\betam} \hat{\partial}^{\beta +\alpha}\hat{u}\|^\theta_{L^p(\hB) }
\end{multline*}
and we may adjust the exponents of $\hr$ and the term in
$\frac{2-\gamma}{m}$ introducing a constant $\Cinterpone$ that depends
only on $\gamma$, $m$, $d$ and $p$, obtaining
\begin{multline*}
      \sum_{\alpham = 1} \|\hat{\partial}^\alpha \hr^{\frac{2-\gamma}{m}+\betam}\hat{\partial}^\beta \hat{u}\|^\theta_{L^p(\hB) } 
      \\
\leq C\left( 
  \left( \betam + 1\right)^\theta \|\hr^{\betam -\gamma} \hat{\partial}^\beta \hat{u}\|^\theta_{L^p(\hB) } 
+ \sum_{\alpham=1} \|\hr^{\betam-\gamma+1} \hat{\partial}^{\beta +\alpha}\hat{u}\|^\theta_{L^p(\hB) } \right).
\end{multline*}
Denoting $\Cinterptwo = \Cinterpone\max(1, 2^{\frac{2}{m} +|\gamma|(1-\frac{1}{m}) }) $, scaling everything back to $B^j$ and adjusting the exponents,
\begin{multline*}
  \|r^{\frac{2-\gamma}{m}+\betam} \dbeta u\|_{L^{m p}(B^j)} \\
  \begin{aligned}
  &\leq 
  \Cinterptwo 2^{\frac{j}{m}\left((\gamma-d/p)(1-m)-2\right)}
  \|r^{\betam-\gamma} {\partial}^\beta u\|^{1-\theta}_{L^{p}(B^j)} 
\left\{ \vphantom{\sum_{i=1}^d}\left( \betam + 1\right)^\theta \|r^{\betam -\gamma} {\partial}^\beta {u}\|^\theta_{L^p(B^j) }  \right.
\\ & \qquad \left. + \sum_{\alpham=1} \|r^{\betam-\gamma+1} {\partial}^{\beta +\alpha}{u}\|^\theta_{L^p(B^j) }\right\}.
  \end{aligned}
\end{multline*}
If $\gamma-d/p > 2/(1-m)$ and therefore 
  $2^{\frac{j}{m}\left((\gamma-d/p)(1-m)-2\right)}\leq 1$, we can sum
  over all $j\in \mathbb{N}$ thus obtaining the estimate
  \eqref{eq:thetaprod} on the whole ball $B_{1}$. Indeed, denoting
    $\tau =\frac{1}{m}\left((\gamma-d/p)(1-m)-2\right) $
    \begin{align*}
      &\|r^{\frac{2-\gamma}{m}+\betam} \dbeta u\|_{L^{m p}(B_1)}\\
      &\qquad\leq \sum_{j=1}^\infty
      \|r^{\frac{2-\gamma}{m}+\betam} \dbeta u\|_{L^{m p}(B^j)}\\
&\qquad\begin{aligned}
  &\leq 
  \Cinterptwo \left( \sum_{j=1}^\infty2^{j\tau} \right)
  \max_{j\in \mathbb{N}}\bigg( \|r^{\betam-\gamma} {\partial}^\beta u\|^{1-\theta}_{L^{p}(B^j)} 
\left\{ \vphantom{\sum_{i=1}^d}\left( \betam + 1\right)^\theta \|r^{\betam -\gamma} {\partial}^\beta {u}\|^\theta_{L^p(B^j) }  \right.
\\ & \qquad \left. + \sum_{\alpham=1} \|r^{\betam-\gamma+1} {\partial}^{\beta +\alpha}{u}\|^\theta_{L^p(B^j) }\right\}\bigg).
\end{aligned} \\
&\qquad\begin{aligned}
  &\leq 
  \Cinterptwo \frac{1}{1-2^\tau}
  \|r^{\betam-\gamma} {\partial}^\beta u\|^{1-\theta}_{L^{p}(B_1)} 
\left\{ \vphantom{\sum_{i=1}^d}\left( \betam + 1\right)^\theta \|r^{\betam -\gamma} {\partial}^\beta {u}\|^\theta_{L^p(B_1) }  \right.
\\ & \qquad \left. + \sum_{\alpham=1} \|r^{\betam-\gamma+1} {\partial}^{\beta +\alpha}{u}\|^\theta_{L^p(B_1) }\right\}.
  \end{aligned}
    \end{align*}
    Then, writing $\Cinterpthree = \Cinterptwo \frac{1}{1-2^\tau}$,
    by scaling, we
    have that,  for all $D>0$,
    \begin{align*}
 & \|r^{\frac{2-\gamma}{m}+\betam} \dbeta u\|_{L^{m p}(B_D)}\\
  &\qquad \leq  
  D^{-\frac{2-\gamma}{n} -\frac{d}{np} +\gamma + \frac{d}{p}} \Cinterpthree
\|r^{\betam-\gamma} \dbeta u\|^{1-\theta}_{L^p(B_D)}
    \left\{ \vphantom{\sum_{i=1}^d} (\betam+1)^\theta \| r^{\betam-\gamma}\dbeta u\|^\theta_{L^p(B_D)} \right.
      \\ 
& \qquad \qquad\left.+ \sum_{\alpham=1} \| r^{\betam+1-\gamma} \partial^{\beta+\alpha} u \|^\theta_{L^p(B_D)}\right\}.
    \end{align*}
    Defining
    \begin{equation*}
      \Cinterp = \Cinterpthree \max\left( R_0^{-\frac{2-\gamma}{n} -\frac{d}{np} +\gamma + \frac{d}{p}} ,R_1^{-\frac{2-\gamma}{n} -\frac{d}{np} +\gamma + \frac{d}{p}}  \right)
    \end{equation*}
    concludes the proof.
\end{proof}
\subsection{Analyticity of solutions}
\label{sec:nonlinsch-analytic}
We now consider the nonlinear Schr\"{o}dinger eigenvalue problem
\eqref{eq:eq-cont} with polynomial nonlinearity, given by
  \begin{equation}
    \label{eq:analytic-problem}
    Lu = -\Delta u + V u + |u|^{n-1} u = \lambda u.
  \end{equation}
  We suppose that the potential $V$ is singular on a finite set of discrete
  points and consider the case of an up-to-quartic nonlinearity (i.e.,
  $n\in \mathbb{N}$ and $n \leq 4$). We show, in the following
  theorem, that the results on the regularity of the solution that can be
  obtained in the linear case \cite{linear} can be extended to the nonlinear
  one. We recall that $\epsilon \in (0,1)$ so that $V\in
  \mathcal{K}^{\varpi, \infty}_{\epsilon-2}(\Omega)$.
\begin{theorem}
  \label{theorem:analytic}
  Let $\epsilon \in (0,1)$.
  Let $u$ be the solution to \eqref{eq:analytic-problem} with $V\in \mathcal{K}^{\varpi,
      \infty}_{\epsilon-2}(\Omega)$ and $n \in\{2,3,4\}$. Then,
  \begin{equation}
    \label{eq:analytic}
    u\in \mathcal{J}^{\varpi}_{\gamma}(\Omega)
  \end{equation}
  for any $\gamma < \epsilon + d/2$.
\end{theorem}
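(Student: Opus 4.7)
The plan is to proceed by induction on the derivative order $k$, establishing local weighted $L^p$ bounds of the form
\begin{equation*}
\sum_{|\alpha|=k}\|r^{k-\gamma}\dalpha u\|_{L^p(B_{R-(k+1)\rho_k})} \leq C_u A_u^k k!
\end{equation*}
on a small ball $B_R$ centered at $\fc$, for a single fixed $p>d$ large enough that the hypotheses $p\geq d(1-1/\delta)$ and $\gamma - d/p \geq 2/(1-\delta)$ of Lemma \ref{lemma:thetaprod} hold simultaneously for $\delta\in\{2,3,4\}$ and any prescribed $\gamma<\epsilon+d/2$. Away from $\fc$ the equation is a classical semilinear elliptic equation with analytic nonlinearity, so standard interior analyticity provides the analogous bounds on a covering of $\Omega\setminus B_{R/2}(\fc)$; a partition of unity argument and conversion from $L^p$ to $L^2$ by H\"older on bounded sets then yield the global $\mathcal{J}^\varpi_\gamma(\Omega)$ estimate.

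The base case for small $k$ is supplied by Lemma \ref{lemma:u-reg} together with $u\in L^\infty(\Omega)$, giving $u\in \mathcal{J}^2_{d/2+\alpha}(\Omega)$ for every $\alpha<\epsilon$, upgraded locally to the required $L^p$ regularity via a Sobolev embedding. For the inductive step I rewrite \eqref{eq:analytic-problem} as $-\Delta u = \lambda u - V u - |u|^{\delta-1} u$ and apply Proposition \ref{lemma:elliptic-weighted} with $\rho = R/(2(k+1))$. The telescoping in the nested ball parameter $j$ built into the proposition reduces the estimate at order $k+1$ to the weighted $L^p$ norms of $\dbeta(\Delta u)$ for $|\beta|=k-1$, plus strictly lower-order terms already controlled by the induction hypothesis; the factor $\rho^{-1}\simeq k$ supplied by the proposition is exactly the source of the factorial growth $A^k k!$.

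The crucial and most delicate step is estimating the nonlinear contribution $\dbeta(|u|^{\delta-1} u)$. Since $u$ is the ground state and hence of constant sign \cite{Cances2010}, I replace $|u|^{\delta-1}u$ by $\pm u^\delta$ and apply the Leibniz rule, obtaining a sum over compositions $\beta = \beta_1+\dots+\beta_\delta$ of products $\prod_{i}\partial^{\beta_i} u$. A $\delta$-fold H\"older inequality with common exponent $\delta p$ reduces the weighted $L^p$ norm of the product to a product of weighted $L^{\delta p}$ norms of the individual factors, and Lemma \ref{lemma:thetaprod} then controls each $L^{\delta p}$ factor by a geometric mean of $L^p$ norms of $\partial^{\beta_i} u$ and $\partial^{\beta_i+\alpha} u$ with $|\alpha|=1$. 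Invoking the induction hypothesis on each factor and summing over compositions using multinomial identities of the form $\sum_{\beta_1+\dots+\beta_\delta=\beta}\binom{|\beta|}{|\beta_1|,\dots,|\beta_\delta|}|\beta_1|!\cdots|\beta_\delta|! \lesssim \delta^{|\beta|}|\beta|!$ closes the induction, provided $A$ is chosen larger than the combined constants of Proposition \ref{lemma:elliptic-weighted}, Lemma \ref{lemma:thetaprod}, and the analyticity constant of $V$; the simpler term $\dbeta(Vu)$ is treated analogously via the Leibniz rule and the weighted analyticity $V\in\mathcal{K}^{\varpi,\infty}_{\epsilon-2}$.

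The main obstacle I anticipate is precisely the bookkeeping in this nonlinear step: one must verify that the powers $(|\beta|+1)^\theta$ from Lemma \ref{lemma:thetaprod}, the multinomial factors from Leibniz, and the $\rho^{-1}\simeq k$ from Proposition \ref{lemma:elliptic-weighted} combine to yield only $A^k k!$ growth rather than a faster rate such as $A^k(k!)^{1+\eta}$. The cutoff $\delta\leq 4$ enters at this point through the admissible range of $(p,\gamma)$ in Lemma \ref{lemma:thetaprod} together with the base $L^p$ regularity available from Lemma \ref{lemma:u-reg}: a larger $\delta$ would demand either stronger base regularity or a sharper interpolation than is at our disposal, and the induction loop would fail to close uniformly in $k$.
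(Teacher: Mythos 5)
Your proposal follows the same overall strategy as the paper's proof: induction on the derivative order over shrinking balls $B_{R-(k+1)\rho}$, Proposition \ref{lemma:elliptic-weighted} to trade the leading derivative for the right-hand side, and Lemma \ref{lemma:thetaprod} to return from the $L^{\delta p}$ norms produced by H\"older to the $L^p$ norms governed by the induction hypothesis. The one genuine variation is in the Leibniz expansion of $u^\delta$: you propose a direct $\delta$-fold expansion with a multinomial sum, whereas the paper (Lemmas \ref{lemma:nonlin2}--\ref{lemma:nonlin4}) bootstraps through $u^2, u^3, u^4$, one bilinear Leibniz step at a time. The bootstrap keeps the combinatorics binary: at each stage the sum $\sum_i \binom{j}{i}\, i!\, (j-i)!\, (i+1)^\theta (j-i+1)^\theta /\sqrt{i(j-i)}$ is immediately dominated by $j!$ because the Stirling factor $1/\sqrt{i(j-i)}$ makes the sum over $i$ summable. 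Your proposed multinomial bound $\sum_{\beta_1+\dots+\beta_\delta=\beta}\binom{|\beta|}{|\beta_1|,\dots,|\beta_\delta|}\prod_i |\beta_i|!\lesssim \delta^{|\beta|}|\beta|!$ does not yet carry the additional $(|\beta_i|+1)^\theta$ weights and $|\beta_i|^{-1/2}$ damping factors from Lemma \ref{lemma:thetaprod} and Stirling, which are precisely what keep the constant uniform in $k$; this is the bookkeeping you flag as the main obstacle, and it is indeed where the work lies, though I see no reason it would fail.

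One point your proposal leaves implicit and that is worth making explicit, because it is what actually pins down $\delta \le 4$: after the nonlinear estimate, the term $\dbeta(u^\delta)$ for $|\beta|=k-1$ carries a surplus of $(\delta-1)/2$ powers of $|\beta|$ and $\delta\theta$ powers of $\rho^{-1}$ relative to the induction bound, where $\theta = \frac{d}{p}\bigl(1-\frac{1}{\delta}\bigr)$. Using $|\beta|\lesssim\rho^{-1}$, this surplus is absorbed by the two orders of $\rho^{-1}$ that Proposition \ref{lemma:elliptic-weighted} allows to be spent, precisely when $\delta\theta + (\delta-1)/2 \le 2$, i.e. $p\ge \frac{2d(\delta-1)}{5-\delta}$ (this is condition \eqref{eq:p-cond}). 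Since $\theta>0$ necessarily, the constraint is feasible only for $\delta\le 4$: at $\delta=5$ the factor $(\delta-1)/2=2$ already exhausts the available room and no finite $p$ works. Your diagnosis that the cutoff comes from the admissible $(p,\gamma)$ range in Lemma \ref{lemma:thetaprod} plus base regularity is in the right spirit, but the sharp mechanism is this budget inequality, not the interpolation range per se.
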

In order to prove the analyticity in weighted spaces of the function $u$ we need
to bound the nonlinear term. We will introduce some preliminary lemmas and
proceed by induction: let us specify the induction hypothesis.
\par\addvspace{7pt plus3pt minus2pt}%
\noindent\textbf{Induction Assumption.}
\hskip.5em {\itshape For $k\in\mathbb{N}$, $1<p<\infty$, $C_u, A_u>0$,
  $\gamma\in \mathbb{R}$, and $R>0$ such that $B_R\subset \Omega$,
    we say that $H_u(k, p, \gamma, C_u, A_u)$  holds in $B_R$ if for all $\rho
    \in (0, R/(2k)]$, there holds
  $u\in \mathcal{J}^{k,p}_\gamma(B_R)$
  and 
  \begin{equation}
    \label{eq:inductionhyp}
   \sum_{\alpham = j}\|r^{\alpham-\gamma}\dalpha u \|_{L^p(B_{R-k \rho})} \leq 
C_u A_u^j (k\rho)^{-j} j^j \quad \text{for all } \lfloor  \gamma-d/p \rfloor+1\leq  j \leq k.
  \end{equation}
  }
\par\addvspace{8pt plus3pt minus2pt}
\begin{lemma}
  \label{lemma:nonlin1}
  Let $R>0$ such that $B_R\subset \Omega$, let $m\in \{2,3,4\}$, let
    $C_u>0$, let $1<p <\infty$ and $\gamma\in \mathbb{R}$ such that
    $2/(1-m)<\gamma - d/p <\epsilon$ and $p\geq d(1-1/m)$. Then,
    there exists $C_1>0$ such that, for all
  $k\in \mathbb{N}$ such that $k\geq 2$, for all $A_u>0$, for all $u$ such that $H_u(k, p,
  \gamma, C_u, A_u)$ holds in $B_R$, for all $1\leq j \leq k-1$, and for all $0<\rho\leq \frac{R}{2k}$,

\begin{equation}
   \sum_{\betam=j}\| r^{\frac{2-\gamma}{n}+\betam} \dbeta u \|_{L^{m p} (B_{R-k\rho})} 
\leq C_{1} A_u^{j+\theta} (k\rho)^{-j-\theta} j^j (j+1)^\theta,
\end{equation}
with $\theta = \frac{d}{p}\left(1-\frac{1}{m}\right)$.
\end{lemma}
\begin{proof}
First, we use \eqref{eq:thetaprod} in order to go back to integrals in $L^p$:
in particular, with respect to \eqref{eq:thetaprod} we have $D=R-k\rho \in
  [R/2, R)$ and since $H_u(k, p, \gamma, C_u, A_u)$ holds in $B_R$ with $k\geq
  2$, it follows that $|u|_{\mathcal{K}^{i, p}_\gamma(B_R)}<\infty$ for
  $i=\betam, \betam+1$. Then,
   \begin{multline*}
   \| r^{\frac{2-\gamma}{m}+\betam} \dbeta u \|_{L^{m p} (B_{R-k\rho})}  \leq\Cinterp\|r^{\betam-\gamma} \dbeta u\|^{1-\theta}_{L^p(B_{R-k\rho})}
\left\{ \vphantom{\sum_{i=1}^d} (\betam+1)^\theta \| r^{\betam-\gamma}\dbeta u\|^\theta_{L^p(B_{R-k\rho})} \right. \\ 
\left.+ \sum_{i=1}^d \| r^{\betam+1-\gamma} \partial^{\beta}\partial_i u \|^\theta_{L^p(B_{R-k\rho})}\right\}.
   \end{multline*}
By the Cauchy-Schwarz inequality,
     \begin{multline*}
        \sum_{\betam = j} \| r^{\betam -\gamma}\dbeta u\|^{1-\theta}_{L^p(B_{R-k\rho})}
(\betam+1)^\theta \| r^{\betam-\gamma}\dbeta u\|^\theta_{L^p(B_{R-k\rho})}
         \\
\leq \left(\sum_{\betam=j} \| r^{\betam -\gamma}\dbeta u\|_{L^p(B_{R-k\rho})}  \right)^{1-\theta}
              \left(\sum_{\betam=j}
  (\betam+1) \| r^{\betam-\gamma}\dbeta u\|_{L^p(B_{R-k\rho})}
  \right)^\theta
     \end{multline*}
     and, 
     \begin{multline*}
       \sum_{\betam = j} \left( \| r^{\betam -\gamma}\dbeta u\|^{1-\theta}_{L^p(B_{R-k\rho})}
\sum_{i=1}^d\| r^{\betam+1-\gamma} \partial^{\beta}\partial_i u \|^\theta_{L^p(B_{R-k\rho})} \right)
         \\
\leq \sum_{i=1}^d\left(\sum_{\betam=j} \| r^{\betam -\gamma}\dbeta u\|_{L^p(B_{R-k\rho})}  \right)^{1-\theta}
              \left(\sum_{\betam=j}
\| r^{\betam+1-\gamma} \partial^{\beta}\partial_i u \|_{L^p(B_{R-k\rho})}
  \right)^\theta
     \end{multline*}
   Then, hypothesis \eqref{eq:inductionhyp} implies, for $j=1, \dots, k-1$,
   \begin{equation*}
     \left(\sum_{\betam = j}\|r^{j-\gamma} \dbeta u\|_{L^p(B_{R-k\rho})}  \right)^{1-\theta} \leq
     C_u^{1-\theta}A_u^{{j} (1-\theta)} \rho^{-{j} (1-\theta)} \left( \frac{{j}}{k}\right)^{{j} (1-\theta)}
   \end{equation*}
   and 
   \begin{multline*}
 ( j+1)^\theta\left( \sum_{\betam = j}  \| r^{\betam-\gamma}\dbeta u\|_{L^p(B_{R-k\rho})}  \right)^{\theta}
+ \sum_{i=1}^d \left(\sum_{\betam=j}\| r^{\betam+1-\gamma} \partial^{\beta}\partial_i u \|_{L^p(B_{R-k\rho})}  \right)^{\theta}
\\
\leq C_{u}^\theta ({j}+1)^\theta A_u^{{j} \theta} \rho^{-{j}\theta} \left( \frac{{j}}{k}\right)^{{j}\theta}
+dC_{u}^\theta A_u^{({j} +1)\theta} \rho^{-({j}+1)\theta} \left( \frac{{j}+1}{k}\right)^{({j}+1)\theta}.
   \end{multline*}
   Therefore, multiplying the right hand sides of the last two inequalities,
   \begin{equation*}
   \sum_{\betam = j}\| r^{\frac{2-\gamma}{m}+\betam} \dbeta u \|_{L^{m p} (B_{R-k\rho})}  \leq
   \Cinterp(d+1)C_u A_u^{{j} +\theta} (k\rho)^{-{j} -\theta} {j}^{{j} (1-\theta)} ({j}+1)^{({j}+1)\theta}.
   \end{equation*}
   We finally need to bound the last two terms in the multiplication above. By
   Stirling's formula, there exists $C>0$ such that
   \begin{equation*}
     {j}^{{j} (1-\theta)} ({j}+1)^{({j}+1)\theta} \leq C {j} ! {j}^{-1/2} e^{j} (j+1)^{\theta/2} j^{\theta/2}, \qquad \forall j\in \mathbb{N},
   \end{equation*}
   and another application of Stirling's formula gives the thesis.
\end{proof}
In order to estimate the $L^p$ weighted norms of derivatives of $u^{n}$ we
will use Leibniz's rule and break the $L^p$ norms into multiple $L^{n p}$
norms. Lemma \ref{lemma:nonlin1} then allows to go back to the induction
hypothesis. We continue by estimating the weighted norms of $u^2$ through the
procedure we just outlined. For two multi indices $\alpha = (\alpha_1, \dots,
\alpha_d)\in \mathbb{N}^d_0$ and $\beta = (\beta_1, \dots,
\beta_d)\in\mathbb{N}^d_0$, we write $\alpha! =\alpha_1!\cdots \alpha_d!$, $\alpha
+ \beta = (\alpha_1+\beta_1, \dots, \alpha_d+ \beta_d)$, and
\begin{equation*}
  \binom{\alpha}{\beta} = \frac{\alpha!}{\beta! (\alpha-\beta)!}.
\end{equation*}
Furthermore, we write $\alpha < \beta$ if $\alpha_i\leq \beta_i$ for all
  $i\in \{1, \dots, d\}$ and there exists at least one index $j\in \{1, \dots,
  d\}$ such that $\alpha_j < \beta_j$. We write $\alpha > 0$ if there exists at
  least one $j\in \{1, \dots, d\}$ such that $\alpha_j>0$ (i.e., if
  $\alpham>0$). We indicate
  \begin{equation*}
    \sum_{0<\alpha<\beta}= \sum_{\substack{\alpha\in \mathbb{N}^d_0 : \\\alpha>0\land\alpha<\beta}}.
  \end{equation*}

Recall \cite{Kato1996} that
\begin{equation}
  \label{eq:binom}
  \sum_{\substack{\betam = n\\\beta\leq\alpha}} \binom{\alpha}{\beta} = \binom{\alpham}{n}.
\end{equation}
We introduce the following statement on sums over multi indices.
\begin{lemma}
For all $k\in \mathbb{N}$, $k\geq 2$, and for all $a, b:
  \mathbb{N}^d_0 \to \mathbb{R}$, there holds  
  \begin{equation}
  \label{eq:sums}  
  \sum_{\betam = k} \sum_{0<\zeta<\beta} a(\zeta) b(\beta-\zeta)
  =   \sum_{j=1}^{k-1}\sum_{\zetam = j} \sum_{\xim = k-j} a(\zeta) b(\xi).
  \end{equation}
\end{lemma}
\begin{proof}
  First, remark that
    \begin{align*}
      \sum_{\betam = k} \sum_{0<\zeta<\beta} a(\zeta) b(\beta-\zeta)
      &= \sum_{\betam=k}\sum_{j=1}^{k-1} \sum_{\substack{\zeta: \zeta<\beta\\ \zetam=j}}a(\zeta) b(\beta-\zeta)
      = \sum_{j=1}^{k-1}\sum_{\betam=k} \sum_{\substack{\zeta: \zeta<\beta\\ \zetam=j}}a(\zeta) b(\beta-\zeta)\\
      &= \sum_{j=1}^{k-1}\sum_{\zetam = j} \sum_{\substack{\beta: \beta>\zeta\\ \betam = k}}a(\zeta) b(\beta-\zeta).
    \end{align*}
    Then, \eqref{eq:sums} follows by replacing $\beta$ by $\xi+\zeta$ in the
    last sum at the right hand side, and remarking that, for all $k\in
    \mathbb{N}$, $k\geq 2$ and for all $\zeta\in\mathbb{N}^d_0$ such that $1\leq
    \zetam\leq k-1$,
    \begin{equation*}
      \left\{\beta \in \mathbb{N}^d_0: \betam = k, \beta>\zeta\right\}
      =
      \left\{\zeta+\xi \in \mathbb{N}^d_0: \xim = k - \zetam\right\}.
    \end{equation*} \end{proof}
\begin{lemma}
  \label{lemma:nonlin2}
  Let $R>0$ such that $B_R\subset \Omega$, let $m\in \{2,3,4\}$, let
    $C_u>0$, let $1<p <\infty$ and $\gamma\in \mathbb{R}$ such that
    $2/(1-m)<\gamma - d/p <\epsilon$ and $p\geq d(1-1/m)$. Then,
    there exists $C_2>0$ such that, for all
  $k\in \mathbb{N}$ such that $k\geq 2$, for all $A_u>0$, for all $u$ such that $H_u(k, p,
  \gamma, C_u, A_u)$ holds in $B_R$, for all $1\leq j \leq k-1$, and for all $0<\rho\leq \frac{R}{2k}$,
 \begin{equation}
   \label{eq:nonlin2}
   \sum_{\alpham =j}\| r^{2\frac{2-\gamma}{m}+\alpham} \dalpha (u^2) \|_{L^{m p/2} (B_{R-k\rho})} 
\leq C_2 A_u^{{j}+2\theta} \rho^{-{j}-2\theta} \left(\frac{{j}}{k}\right)^{j} {j}^{1/2}.
 \end{equation}
\end{lemma}
\begin{proof}
 By Leibniz's rule and the Cauchy-Schwarz inequality,
 \begin{multline}
   \label{eq:leibniz1}
   \sum_{\alpham =j}\| r^{2\frac{2-\gamma}{m}+\alpham} \dalpha (u^2) \|_{L^{m p/2} (B_{R-k\rho})} 
   \\
   \begin{aligned}
&\leq  
\sum_{\alpham =j}\sum_{0< \beta < \alpha} \binom{\alpha}{\beta} \| r^{\frac{2-\gamma}{m}+\betam} \dbeta u \|_{L^{m p} (B_{R-k\rho})} \| r^{\frac{2-\gamma}{m}+\alpham-\betam} \partial^{\alpha-\beta} u \|_{L^{m p} (B_{R-k\rho})} 
\\
&\quad +  2\sum_{\alpham =j}\| r^{2\frac{2-\gamma}{m}+\alpham} \dalpha u \|_{L^{m p/2} (B_{R-k\rho})} \|  u \|_{L^{\infty} (B_{R-k\rho})} \\
   \end{aligned}
 \end{multline}
 Considering the sum over $0<\beta <\alpha$, Lemma \ref{lemma:nonlin1}, \eqref{eq:sums}, and Stirling's inequality give
 \begin{multline*}
\sum_{\alpham =j}\sum_{0<\beta <\alpha} \binom{\alpha}{\beta} \| r^{\frac{2-\gamma}{m}+\betam} \dbeta u \|_{L^{m p} (B_{R-k\rho})} \| r^{\frac{2-\gamma}{m}+\alpham-\betam} \partial^{\alpha-\beta} u \|_{L^{m p} (B_{R-k\rho})} \\
\begin{aligned}
&\leq C A_u^{{j} +2\theta} \rho^{-{j}-2\theta} \sum_{i=1}^{{j}-1} \binom{{j}}{i} i!({j} - i)! e^{j} \frac{(i+1)^\theta ({j}-i+1)^\theta}{k^{{j}+2\theta}}\frac{1}{ \sqrt{i({j}-i)}}\\
&\leq C A_u^{{j} +2\theta} \rho^{-{j}-2\theta} \frac{{j}!e^{j}}{k^{j}}\\
&\leq C A_u^{{j} +2\theta} \rho^{-{j}-2\theta} \left(\frac{{j}}{k}\right)^{j}{j}^{1/2}.
\end{aligned}
 \end{multline*}
The second term at the right hand side of \eqref{eq:leibniz1} is controlled
using Lemma \ref{lemma:nonlin1}, since $\gamma -d/p< 2$, and the injection
$\mathcal{J}^2_{d/2+\eta}(\Omega)\hookrightarrow L^\infty(\Omega)$, valid for
any $\eta>0$ \cite{Kozlov1997}.
\end{proof}
With the same proof as above, we can deal with a cubic nonlinear term, as we show in the following lemma.
\begin{lemma}
  \label{lemma:nonlin3}
Let $R>0$ such that $B_R\subset \Omega$, let $m\in \{3,4\}$, let
    $C_u>0$, let $1<p <\infty$ and $\gamma\in \mathbb{R}$ such that
    $2/(1-m)<\gamma - d/p <\epsilon$ and $p\geq d(1-1/m)$. Then,
    there exists $C_3>0$ such that, for all
  $k\in \mathbb{N}$ such that $k\geq 2$, for all $A_u>0$, for all $u$ such that $H_u(k, p,
  \gamma, C_u, A_u)$ holds in $B_R$, for all $1\leq j \leq k-1$, and for all $0<\rho\leq \frac{R}{2k}$,
 \begin{equation}
   \label{eq:nonlin3}
  \sum_{\alpham =j} \| r^{3\frac{2-\gamma}{m}+\alpham} \dalpha (u^3) \|_{L^{m p/3} (B_{R-k\rho})} 
\leq C_3 A_u^{{j}+3\theta} \rho^{-{j}-3\theta} \left(\frac{{j}}{k}\right)^{j} {j}.
 \end{equation}
\end{lemma}
\begin{proof}
We have  
 \begin{multline*}
  \sum_{\alpham =j}   \| r^{3\frac{2-\gamma}{m}+\alpham} \dalpha (u^3) \|_{L^{m p/3} (B_{R-k\rho})} 
   \\ \leq C 
\sum_{\alpham =j}\sum_{\beta \leq \alpha} \binom{\alpha}{\beta} \| r^{\frac{2-\gamma}{m}+\betam} \dbeta u \|_{L^{m p} (B_{R-k\rho})} \| r^{2\frac{2-\gamma}{m}+\alpham-\betam} \partial^{\alpha-\beta} (u^2) \|_{L^{m p/2} (B_{R-k\rho})} .
 \end{multline*}
 Using \eqref{eq:nonlin2} we follow the same procedure as in the proof of Lemma \ref{lemma:nonlin2}. When $0<\beta<\alpha$ in the sum above,
\begin{align*}
&\sum_{\alpham =j} \sum_{0< \beta <\alpha} \binom{\alpha}{\beta} \| r^{\frac{2-\gamma}{m}+\betam} \dbeta u \|_{L^{m p} (B_{R-k\rho})} \| r^{2\frac{2-\gamma}{m}+\alpham-\betam} \partial^{\alpha-\beta} (u^2) \|_{L^{m p/2} (B_{R-k\rho})} \\
&\qquad\leq C A_u^{{j} +3\theta} \rho^{-{j}-3\theta} \sum_{i=1}^{{j}-1} \binom{{j}}{i} i!({j} - i)! e^{j} \frac{(i+1)^\theta ({j}-i+1)^\theta}{k^{{j}+2\theta}}\frac{\sqrt{{j} -i}}{ \sqrt{i({j}-i)}}\\
&\qquad\leq C A_u^{{j} +3\theta} \rho^{-{j}-3\theta} \frac{{j}!e^{j}\sqrt{{j}}}{k^{j}}\\
&\qquad\leq C A_u^{{j} +3\theta} \rho^{-{j}-3\theta} \left(\frac{{j}}{k}\right)^{j}{j}.
 \end{align*}
 As before, the terms in the sum where $\beta = 0$ and $\beta=\alpha$ give the same bound.
\end{proof}
The proof of the next lemma, in which we control a quartic term, amounts to a
repetition of the arguments above; we show its proof for completeness.
\begin{lemma}
  \label{lemma:nonlin4}
Let $R>0$ such that $B_R\subset \Omega$, let
    $C_u>0$, let $1<p <\infty$ and $\gamma\in \mathbb{R}$ such that
    $-2/3<\gamma - d/p <\epsilon$ and $p\geq 3d/4$. Then,
    there exists $C_4>0$ such that, for all
  $k\in \mathbb{N}$ such that $k\geq 2$, for all $A_u>0$, for all $u$ such that $H_u(k, p,
  \gamma, C_u, A_u)$ holds in $B_R$, for all $1\leq j \leq k-1$, and for all $0<\rho\leq \frac{R}{2k}$,
 \begin{equation}
   \label{eq:nonlin4}
  \sum_{\alpham =j}  \| r^{{2-\gamma}+\alpham} \dalpha (u^4) \|_{L^{p} (B_{R-k\rho})} 
\leq C_4 A^{{j}+4\theta} \rho^{-{j}-4\theta} \left(\frac{{j}}{k}\right)^{j} {j}^{3/2}.
 \end{equation}
\end{lemma}
\begin{proof}
There holds
 \begin{multline*}
  \sum_{\alpham =j}   \| r^{{2-\gamma}+\alpham} \dalpha (u^4) \|_{L^{p} (B_{R-k\rho})} 
   \\ \leq C 
 \sum_{\alpham =j} \sum_{\beta \leq \alpha} \binom{\alpha}{\beta} \| r^{\frac{2-\gamma}{4}+\betam} \dbeta u \|_{L^{4 p} (B_{R-k\rho})} \| r^{3\frac{2-\gamma}{4}+\alpham-\betam} \partial^{\alpha-\beta} (u^3) \|_{L^{4p/3} (B_{R-k\rho})} .
 \end{multline*}
 We can now use the result of Lemma \ref{lemma:nonlin3} with $m= 4$. When $0<\beta<\alpha$ in the sum above,
\begin{align*}
& \sum_{\alpham =j} \sum_{0< \beta <\alpha} \binom{\alpha}{\beta} \| r^{\frac{2-\gamma}{4}+\betam} \dbeta u \|_{L^{4p} (B_{R-k\rho})} \| r^{3\frac{2-\gamma}{4}+\alpham-\betam} \partial^{\alpha-\beta} (u^3) \|_{L^{4p/3} (B_{R-k\rho})} \\
&\qquad\leq C A_u^{{j} +4\theta} \rho^{-{j}-4\theta} \sum_{i=1}^{{j}-1} \binom{{j}}{i} i!({j} - i)! e^{j} \frac{(i+1)^\theta ({j}-i+1)^\theta}{k^{{j}+2\theta}}\frac{\sqrt{{j} -i}}{ \sqrt{i({j}-i)}}\\
&\qquad\leq C A_u^{{j} +4\theta} \rho^{-{j}-4\theta} \left(\frac{{j}}{k}\right)^{j}{j}.
 \end{align*}
 The direct application of Lemmas \ref{lemma:nonlin1} and \ref{lemma:nonlin3}
 let us obtain the estimate for the terms in the sum where $\beta= \alpha$ and
 $\beta =0$, respectively.
\end{proof}
The lemma below is, then, a finite weighted regularity estimate that we use as
the basis for induction  in the proof of Theorem \ref{theorem:analytic}.
\begin{lemma}
  \label{lemma:base-reg}
  Let $\epsilon\in(0,1)$.
  The solution $u$ to problem \eqref{eq:analytic-problem}, with $n\in\{2,3,4\}$
  and $V\in \mathcal{K}^{\varpi, \infty}_{\epsilon-2}(\Omega)$,
  is such that $u\in\mathcal{J}^{2,p}_\gamma(\Omega)$, for all $1<
  p<\infty$ and for all $0 < \gamma - d/p < \epsilon$.
\end{lemma}
\begin{proof}
 The operator $L_\mathrm{lin} = -\Delta +V$ is an isomorphism 
\begin{equation}
  \label{eq:isomorph}
  \mathcal{J}^{k+2}_{\xi}(\Omega) \to \mathcal{J}^{k}_{\xi-2}(\Omega)
\end{equation}
for any $0<\xi-d/2 < \epsilon$ and all $k\in\mathbb{N}$, since $\Omega$ is a
compact set without boundary, see Lemma 3.1 of
Ref.~\refcite{linear}.
Since we can also show that $u\in L^\infty(\Omega)$ \cite{Stampacchia1965}, the solution to \eqref{eq:analytic-problem} is such that $u\in \mathcal{J}^2_{\xi}(\Omega)$. 
Iterating this line of reasoning, we have, from $u\in L^\infty(\Omega)\cap
H^1(\Omega)$ that $u^n\in H^1(\Omega)\subset \mathcal{J}^1_{\xi-2}(\Omega)$, hence
$u\in\mathcal{J}^3_{\xi}(\Omega)$ for all $0<\xi-d/2<\epsilon$.

We now
  claim that, by injection, for all $p>1$ and all $0< \gamma -
  d/p<\epsilon$ there holds
$u\in\mathcal{J}^{1,p}_{\gamma}(\Omega)$. Remark that, since
  $u\in\mathcal{J}^3_{\xi}(\Omega)$, we have \cite{Costabel2010a} that
  $u-u(\fc)\in \cK^3_{\xi}(\Omega)$. Then, by Lemma 1.2.2 of Ref.~\refcite{Mazya2010},
  $u-u(\fc) \in \cK^{1, p}_{\gamma}(\Omega)$ for all $p>1$ and for all
  $\gamma\leq \xi-d/2+d/p$.
  Since furthermore $|u(\fc)|\leq C \| u \|_{\cJ^2_{\eta}(\Omega)}$ for a $C$
  independent of $u$ and for any $\eta>d/2$ \cite{Kozlov1997}, we obtain that
  $u\in\mathcal{J}^{1,p}_{\gamma}(\Omega)$, for all $p>1$ and all $0< \gamma -
  d/p<\epsilon$.

 We now show that $u\in \mathcal{J}^4_\xi(\Omega)$ for all $0<\xi-d/2<\epsilon$.
 This will follow from the fact that $L_{\mathrm{lin}}$ is an isomorphism on the
 spaces \eqref{eq:isomorph} and from the fact that $u^n\in \mathcal{J}^2_{2-\xi}(\Omega)$.
 Indeed, we have already shown that $u^n \in \mathcal{J}^1_{2-\xi}(\Omega)$; in
 addition, for any $\alpha\in \mathbb{N}^d_0$, $\alpham=2$,
 \begin{align*}
   &\|r^{4-\xi} \dalpha (u^n) \|_{L^2(\Omega)} \\
   & \qquad \leq n(n-1) \|u\|_{L^\infty(\Omega)}^{n-2} \|r^{4-\xi} (\dbeta u)(\dzeta u)\|_{L^2(\Omega)} + n \|u\|^{n-1}_{L^\infty(\Omega)}\|r^{4-\xi} \dalpha u\|_{L^2(\Omega)} \\
   &\qquad  = (I) + (II),
 \end{align*}
 for suitable $\betam = \zetam = 1$. The norm in $(II)$ is bounded since $u\in
 \mathcal{J}^2_\xi(\Omega)$; to treat term $(I)$, we note that
 \begin{align*}
   \|r^{4-\xi} (\dbeta u)(\dzeta u)\|_{L^2(\Omega)} &\leq
   \|r^{2-\xi/2} \dbeta u\|_{L^4(\Omega)} \|r^{2-\xi/2} \dzeta u\|_{L^4(\Omega)}\\
   & \leq
   | u|_{\mathcal{K}^{1,4}_{\xi/2-1}(\Omega)}^2.
 \end{align*}
 Combining that, from above, $u\in \mathcal{J}^{1, 4}_\gamma(\Omega)$ for all
 $\gamma < \epsilon +d/4$ and that $\xi/2 - 1 < \epsilon/2 + d/4 - 1 <
 \epsilon+d/4$, we obtain that term $(I)$ is bounded, too. Therefore, $u^n\in
 \mathcal{J}^{2}_{\xi-2}(\Omega)$ and this implies $u\in
 \mathcal{J}^4_\xi(\Omega)$, for all $0< \xi-d/2 < \epsilon$.
 
We conclude the same procedure as above: $u - u(\fc) \in
\mathcal{K}^4_\xi(\Omega)$ by Ref.~\refcite{Costabel2010a}, hence $u-u(\fc)
\in \mathcal{K}^{2, p}$ for all $1<p<\infty$ and $\gamma < \epsilon +d/p$,
whence $u\in \mathcal{J}^{2,p}_\gamma(\Omega)$ for all $1<p<\infty$ and $0 <\gamma -d/p<\epsilon$.
\end{proof}

The proof of \eqref{eq:analytic} is now complete: we just need to bring the estimates together.
\begin{proof}(Proof of Theorem \ref{theorem:analytic})
Since $V\in \mathcal{K}^{\varpi, \infty}_{\epsilon-2}(\Omega)$, we denote by
  $C_V, A_V$ the constants such that $\sum_{\alpham = i}\|r^{\alpham-\epsilon+2}\dalpha
  V\|_{L^\infty(\Omega)}\leq C_V A_V^i i!$ for all $i\in \mathbb{N}_0$.

We proceed by induction and impose a restriction on $p$; specifically, we
  fix $\pstar$ and $\gammastar$ such that
\begin{equation}
  \label{eq:p-cond}
  \pstar \geq \min\left( 2, 2d \frac{n-1}{5-n} \right),
  \qquad
  \gammastar\in (d/p, d/p+\epsilon).
\end{equation}
Let us also fix $0< \Rstar < 1$ such that $B_\Rstar\subset \Omega$.
  By Lemma \ref{lemma:base-reg}, the induction
  assumption $H_u(2, \pstar, \gammastar, C_{0}, A_{0})$ holds
  in $B_\Rstar$ for some constants $C_0, A_0>0$.
We introduce a constant $C_u> 1$ such that
\begin{equation}
  \label{eq:Cu}
C_u \geq \max\left(  \| u\|_{\mathcal{J}^{2,\pstar}_{\gammastar}(\Omega)}, 
  \Creg(6+|\lambda|)\right),
\end{equation}
where $\Creg$ is the constant in \eqref{eq:elliptic-weighted},
and a constant $A_u >1$ such that
\begin{equation}
  \label{eq:Au}
  A_u \geq \max\left(A_V, \sqrt{2}C_V, \frac{4\pi}{\pstar(\epsilon-\gammastar)+d} C_V,  C_u  \right)\quad \text{and}\quad A_u^{2-n\theta}\geq C_ne,
\end{equation}
  where $C_n$ is the constant defined in one of the Lemmas \ref{lemma:nonlin2} to
  \ref{lemma:nonlin4} and $\theta = \frac{d}{\pstar}\left(
    1-\frac{1}{n} \right)$. Note that, since $2-n\theta > 0$,
  a constant $A_u$ satisfying \eqref{eq:Au} exists.

Suppose now that  $H_u(k, \pstar, \gammastar, C_u, A_u)$ holds in $B_\Rstar$ for a
$k\in\mathbb{N}$, $k\geq 2$.
 We will show that $H_u(k+1, \pstar,
 \gammastar, C_u, A_u)$ holds in $B_{\Rstar}$.
 Start by considering that, for all $\rho \in (0, \frac{\Rstar}{2(k+1)}]$ there exists
   $\widetilde{\rho} =\frac{k+1}{k}\rho$ such that $\widetilde{\rho} \in (0,
   \frac{\Rstar}{2k}]$ and
   \begin{equation*}
     \sum_{\alpham = j}\|r^{\alpham-\gammastar}\dalpha u \|_{L^\pstar(B_{\Rstar-(k+1) \rho})}  
     =
     \sum_{\alpham = j}\|r^{\alpham-\gammastar}\dalpha u \|_{L^\pstar(B_{\Rstar-k \widetilde{\rho}})}  \qquad j = 1, \dots, k.
   \end{equation*}
   Hence, since $H_u(k, \pstar, \gammastar, C_u, A_u)$ holds, we have that
   \begin{equation*}
     \sum_{\alpham = j}\|r^{\alpham-\gammastar}\dalpha u \|_{L^\pstar(B_{\Rstar-(k+1) \rho})}  
     \leq 
C_u A_u^j (k\widetilde{\rho})^{-j} j^j 
\leq
C_u A_u^j ((k+1)\rho)^{-j} j^j \quad \text{for all } 1\leq  j \leq k,
   \end{equation*}
   for all $0<\rho\leq\frac{\Rstar}{2(k+1)}$. It remains to prove that,
 for all  $0<\rho\leq\frac{\Rstar}{2(k+1)}$,
\begin{equation}
  \label{eq:inductionrepeat}
  \begin{aligned}
\sum_{\alpham = k+1}\| r^{\alpham -\gammastar} \dalpha u \|_{L^\pstar(B_{\Rstar-(k+1)\rho})}  &\leq
 C_u A_u^{k+1} ((k+1)\rho)^{-(k+1)} (k+1)^{k+1}\\
 &=C_u A_u^{k+1} \rho^{-(k+1)}.
  \end{aligned}
  \end{equation}
From
\eqref{eq:elliptic-weighted} and \eqref{eq:analytic-problem}, there exists
  a constant $\Creg$ independent of $u$, $k$, and $\rho$ such that
\begin{multline}
  \label{eq:analytic-proof1}
    \sum_{|\alpha| = k+1} \| r^{k+1-\gammastar} \dalpha u\|_{L^\pstar(B_{\Rstar-(k+1)\rho})}
    \leq \Creg\left(  \sum_{|\beta| = k-1} \| r^{k+1-\gammastar}\dbeta(Vu +
      |u|^{n-1} u + \lambda u)\|_{L^\pstar(B_{\Rstar-k\rho})} \right.\\
    \left.+ \sum_{|\alpha| = k-1, k}\rho^{|\alpha|-k-1}\|r^{|\alpha|-\gammastar}\dalpha u\|_{L^\pstar(B_{\Rstar-|\alpha|\rho})} \right).
\end{multline}
We consider the term containing the potential $V$: 
\begin{multline}
  \label{eq:Vu}
 \sum_{\betam=k-1} \| r^{k+1-\gammastar}\dbeta (Vu) \|_{L^\pstar(B_{\Rstar-k\rho})}  \\
  \begin{aligned}
& \leq\sum_{\betam=k-1}\sum_{0 <\zeta < \beta} \binom{\beta}{\zeta} \| r^{2-\epsilon+\zetam} \dzeta V \|_{L^{\infty} (B_{\Rstar-k\rho})} \| r^{\epsilon-\gammastar+\betam-\zetam} \partial^{\beta-\zeta} u \|_{L^{p} (B_{\Rstar-k\rho})} \\
&\qquad+ \| r^{2-\epsilon} V \|_{L^{\infty} (B_{\Rstar-k\rho})}  \sum_{\betam=k-1}\| r^{\epsilon-\gammastar+\betam} \partial^{\beta} u \|_{L^{p} (B_{\Rstar-k\rho})} \\
&\qquad+ \sum_{\betam=k-1}\| r^{2-\epsilon+\betam} \dbeta V \|_{L^{\infty} (B_{\Rstar-k\rho})} \| r^{\epsilon-\gammastar}  u \|_{L^{p} (B_{\Rstar-k\rho})} ,
  \end{aligned}
\end{multline}
Using \eqref{eq:binom} and
  \eqref{eq:sums},
\begin{multline}
\label{eq:Vu-estim}
\sum_{\betam=k-1}\sum_{0 <\zeta < \beta} \binom{\beta}{\zeta} \| r^{2-\epsilon+\zetam} \dzeta V \|_{L^{\infty} (B_{\Rstar-k\rho})} \| r^{\epsilon-\gammastar+\betam-\zetam} \partial^{\beta-\zeta} u \|_{L^{p} (B_{\Rstar-k\rho})}  \\
\begin{aligned}
&\leq C_V \Rstar^\epsilon
 \sum_{j=1}^{k-2} \binom{k-1}{j} A_u^{k-j-1}A_V^j j!(k\rho)^{-(k-j-1)}\left( k-j-1\right)^{k-j-1}\\
&\leq C_V\Rstar^\epsilon
A_u^{k-1} (k\rho)^{-(k-1)} (k-1)! e^{k-1}\sum_{j=1}^{k-2} \frac{(k\rho/e)^j}{\sqrt{k-1-j}}\\
&\leq C_V\Rstar^\epsilon
A_u^{k-1} (k\rho)^{-(k-1)} (k-1)^{k-1}\left( \frac{k-1}{k-2} \right)^{1/2}\frac{\Rstar/(2e)}{1-\Rstar/(2e)}\\
&\leq C_V\sqrt{2}
A_u^{k-1} (k\rho)^{-(k-1)} (k-1)^{k-1}
\leq 
A_u^{k} (k\rho)^{-(k-1)}  (k-1)^{k-1}
\leq A_u^k \rho^{-k+1}
\end{aligned}
\end{multline}
where we have used Stirling's inequality twice, the fact that $k\rho\leq
  \Rstar/2$, and we have concluded using \eqref{eq:Au}. 
The bound on the second term in \eqref{eq:Vu} is straightforward and gives
\begin{align*}
  \| r^{2-\epsilon} V \|_{L^{\infty} (B_{\Rstar-k\rho})}  \sum_{\betam=k-1}\| r^{\epsilon-\gammastar+\betam} \partial^{\beta} u \|_{L^{p} (B_{\Rstar-k\rho})}
  &\leq C_VA_u^{k-1}(k-1)^{k-1}(k\rho)^{-(k-1)}
  \\ & \leq
  A_u^{k}\rho^{-k+1}
\end{align*}
For the last term we note that $\epsilon - \gammastar> -d/\pstar$ thus
\begin{equation*}
\|r^{\epsilon-\gammastar} u \|_{L^\pstar(B_{\Rstar})} \leq \frac{4\pi}{\pstar(\epsilon-\gammastar)+d} \| u\|_{L^\infty(\Omega)} R_\star^{\epsilon-\gammastar+d/\pstar},
\end{equation*}
and therefore
\begin{align*}
  \sum_{\betam=k-1}\| r^{2-\epsilon+\betam} \dbeta V \|_{L^{\infty} (B_{\Rstar-k\rho})} \| r^{\epsilon-\gammastar}  u \|_{L^{p} (B_{\Rstar-k\rho})}
  &\leq \frac{4\pi}{\pstar(\epsilon-\gammastar)+d} C_VA_V^{k-1}(k-1)! \\
  &\leq A_u^k \rho^{-k+1},
\end{align*}
due to \eqref{eq:Au} and to the fact that $k\leq \rho^{-1}$.

We now consider the nonlinear term: we use Lemma \ref{lemma:nonlin2} (with
$m=2$) if $n=2$,
Lemma \ref{lemma:nonlin3} (with $m=3$) if $n=3$, and Lemma \ref{lemma:nonlin4}
if $n=4$.
We have shown that, for
$n \in\{ 2, 3, 4\}$ (recall that $\theta = \frac{d}{\pstar}\left(
    1-\frac{1}{n} \right)$), 
\begin{equation*}
    \sum_{\betam=k-1}\| r^{{2-\gammastar}+\betam} \dbeta u^{n} \|_{L^{p} (B_{\Rstar-k\rho})} 
\leq C_n A_u^{{k-1}+n\theta} \rho^{-{(k-1)}-n\theta} \left(\frac{{k-1}}{k}\right)^{k-1} {(k-1)}^{(n-1)/2}.
\end{equation*}
In addition, $k\leq \rho^{-1}$, therefore,  
\begin{equation*}
    \sum_{\betam=k-1}\| r^{{2-\gammastar}+\betam} \dbeta u^{n} \|_{L^{p} (B_{\Rstar-k\rho})} 
\leq C_n A_u^{{k-1}+n\theta} \rho^{-{(k-1)}-n\theta-(n-1)/2} \left(\frac{{k-1}}{k}\right)^{k-1} .
\end{equation*}
If \eqref{eq:p-cond} holds, then $n \theta + (n-1)/2\leq 2$, hence,
since also $\left(\frac{k-1}{k}\right)^{k-1} \leq e$,
\begin{equation}
\label{eq:nonlin-estim}
\begin{aligned}
    \sum_{\betam=k-1}\| r^{{2-\gammastar}+\betam} \dbeta u^{n} \|_{L^{p} (B_{\Rstar-k\rho})} 
    & \leq C_ne A_u^{k-1+n\theta} \rho^{-{(k-1)}-n\theta-(n-1)/2}
    \\
    & \leq A_u^{k+1} ((k+1)\rho)^{-(k+1)}(k+1)^{k+1}
    \\ & \leq
    A_u^{k+1} \rho^{-k-1},
\end{aligned}
\end{equation}
where we have used that $A_u^{2-n\theta}\geq C_ne$. Note that for all
$d$ and $n$ considered, \eqref{eq:p-cond} is stronger than the hypothesis
$p > d(1-1/n)$ of Lemma \ref{lemma:thetaprod}.
The bound on the term in $\lambda u$ and on the second sum of the right hand side of 
\eqref{eq:analytic-proof1} can be obtained straightforwardly from the induction
hypothesis.
Hence, from \eqref{eq:analytic-proof1}, \eqref{eq:Vu-estim}, and
  \eqref{eq:nonlin-estim},
  \begin{align*}
    &\sum_{|\alpha| = k+1} \| r^{k+1-\gammastar} \dalpha u\|_{L^\pstar(B_{\Rstar-(k+1)\rho})} \\
      &\qquad \leq \Creg \bigg\{ 3 A_u^k\rho^{-k+1}
        + A_u^{k+1}\rho^{-k-1}
          +C_u A_u^k \rho^{-k-1}
          + C_u A_u^{k-1}\rho^{-k-1}\\
           &\qquad \qquad+ |\lambda|\Rstar^2 C_uA_u^{k-1}\rho^{-k+1} \bigg\}
      \\
    &\qquad \leq  \Creg(6+|\lambda|)A_u^{k+1} \rho^{-k-1}
\end{align*}
  where we have used $A_u \geq C_u$ to obtain the last line. Since $C_u \geq
    \Creg(6+|\lambda|)$ by \eqref{eq:Cu},
  we have proven \eqref{eq:inductionrepeat} and therefore the induction step, i.e., that for any $\pstar$ such that
  \eqref{eq:p-cond} holds, and any $\gammastar -d/\pstar \in (0, \epsilon)$,
  there exist $C_u, A_u>0$, such that for all $k\in \mathbb{N}$, $k\geq 2$,
  \begin{equation*}
H_u (k, \pstar, \gammastar, C_u, A_u) \implies H_u(k+1, \pstar, \gammastar, C_u, A_u).
\end{equation*}
Therefore, \eqref{eq:inductionrepeat} holds for all $k\in \mathbb{N}$;
  furthermore, since $\Rstar-k\rho \geq \Rstar/2$, by classical arguments
    in $\Omega\setminus B_{\Rstar/2}$ we find that there exist
  constants $\Ctil_u, \Atil_u>0$ such that
  \begin{equation*}
\sum_{\alpham = k}\| r^{\alpham -\gammastar} \dalpha u \|_{L^\pstar(\Omega)}  \leq
\Ctil_u \Atil_u^k  k^k,
\end{equation*}
for all $k \geq 1$.
Thanks to Stirling's formula, this is equivalent to (increasing the constant
$\Atil_u$ to $\Ahat_u$ in order to absorb the exponential and square root terms)
  \begin{equation}
    \label{eq:final-Lp}
\sum_{\alpham = k}\| r^{\alpham -\gammastar} \dalpha u \|_{L^\pstar(\Omega)}  \leq \Ctil_u \Ahat_u^k k!.
\end{equation}
Then, let $q$ be such that $1/q = 1/2 - 1/\pstar$. For any $\tgamma <  \gammastar  + d/\pstar= \gammastar - d/\pstar
+ d/2$, $\| r^{\gammastar-\tgamma}\|_{L^q(\Omega)} $ is bounded, and
\begin{equation}
  \label{eq:final-L2}
\sum_{\alpham = k}  \| r^{\alpham -\gammastar} \dalpha u \|_{L^\pstar(\Omega)}   \leq \| r^{\gammastar-\tgamma}\|_{L^q(\Omega)} \sum_{\alpham = k}\| r^{\alpham -\tgamma} \dalpha u \|_{L^2(\Omega)}.
\end{equation}
From \eqref{eq:final-Lp} and \eqref{eq:final-L2} we infer \eqref{eq:analytic}.
\end{proof}

\section{Exponential convergence for polynomial nonlinearities}
  \label{sec:exp-convergence}
  In this section, we make the same hypotheses
  on $F$ as in Section \ref{chap:nonlinear-regularity}, i.e., we consider the concrete case where $f(u^2)$ is a polynomial. Let then
  \begin{equation}
    \label{eq:f-poly}
    f(u^2) = |u|^{n-1}
  \end{equation}
  for $n = 2, 3, 4$ (the case $n = 1$ is the linear one). Remark that this
  class of functions satisfies \eqref{eq:cond1} to
  \eqref{eq:cond4}, with in particular $r=2$ in \eqref{eq:cond4}.

  We can regroup the results of the previous sections, applied to the case where
  \eqref{eq:f-poly} holds, in the following theorem.
  \begin{theorem}
    \label{theorem:eigenvalue-exponential}
    Let $u$, $\lambda$ be the solution to \eqref{eq:eq-cont} and $\ud$, $\ld$ be the
    solution to \eqref{eq:eq-discont}. Suppose that
   \eqref{eq:hyp-potential-1}, \eqref{eq:hyp-potential-2}, and
   \eqref{eq:f-poly} hold. Then, for a space $\Xd$ with $N$ degrees of freedom,
   there exists $b >0$ such that
    \begin{equation}
      \label{eq:optimality-exp}
    \lVert  u -\ud \rVert_\mathrm{DG} \leq C e^{-b N^{1/(d+1)}}
  \end{equation}
  and
  \begin{equation}
    \label{eq:lambda-exp}
    |\lambda -\ld| \leq C e^{-b N^{1/(d+1)}}.
  \end{equation}
  \end{theorem}
  \begin{proof}
    Bounds \eqref{eq:optimality-exp} and \eqref{eq:lambda-exp} are a
      consequence of the weighted analytic regularity of $u$ given by Theorem
      \ref{theorem:analytic}, of the exponential approximation properties of
      $\Xd$ stated in \eqref{eq:hp-exponential}, and of Theorem \ref{theorem:convergence1}.  
  \end{proof}
    \section{Numerical results}
\label{sec:numerical}
In this section, we show some results obtained in the approximation of the problem 
that, in its continuous form, reads:
given the $d$-dimensional cube of unitary edge
  $\Omega = (-1/2, 1/2)^d$,
find the eigenpair $(\lambda,
u)\in \mathbb{R}\times H^1(\Omega)$ such that $\| u \|_{L^2(\Omega)} = 1$ and
\begin{equation}
  \label{eq:nonlin-num}
  \begin{aligned}
    -\Delta  u + V u  + |u|^2 u & = \lambda u \text{ in }\Omega\\
    u & = 0 \text{ on }\dOmega,
  \end{aligned}
\end{equation}
In particular, we focus on the computation of the \emph{lowest eigenvalue} and
of its associated eigenvector,
corresponding, from a physical point of view, to the ground state of the system.
\begin{remark}
We consider different boundary conditions with respect to the
  setting of Sections \ref{chap:nonlinear-regularity} and
  \ref{sec:exp-convergence}, i.e., we consider here homogeneous Dirichlet
  boundary conditions instead of periodic ones. The theoretical analysis
  of this case is more complex, due to the fact that the ground state is not
  bounded from below and to the emergence of point and edge singularities
    in the solutions, but the behavior of the method with these boundary
  conditions is of computational interest.
  Indeed, models in quantum chemistry are normally posed in the whole
    space \cite{Szabo2012},
    and homogeneous Dirichlet boundary
  conditions can be used to approximate physical systems in the whole space
  $\mathbb{R}^d$, due to the often observed exponential decay of the wave
    functions,
    see,
    e.g., Ref.~\refcite{Lewin2004} for multiconfiguration equations.

  Our numerical results indicate that the
  exponential convergence shown for the periodic case
  can also be observed in this numerical experiment with
  homogeneous Dirichlet boundary conditions. In the settings and for the
    levels of refinement considered, the corner (end edge, for $d=3$)
    singularities do not seem to affect the exponential convergence of the
    solution.
  \end{remark}

We take potentials of the form $V(x) = -r^{-\alpha}$, for $\alpha = -1/2, -1, -3/2$.
We use a SIP method, and solve the nonlinearity by fixed point iterations.
The stopping criterion on the nonlinear iterations is residual based, i.e., we
stop iterating when
\begin{equation*}
  \langle (A^{\ud} - \ld)\ud , \ud\rangle \leq \epsilon_{\mathrm{tol}}
\end{equation*}
for a given computed solution $\ud\in\Xd$ and a given tolerance $\epsilon_{\mathrm{tol}}$.
We will
indicate the tolerance we use, on a case by case basis, in the following sections.

We impose homogeneous Dirichlet boundary conditions weakly, as is customary
  for discontinuous Galerkin methods.
  We compute elementwise integrals in each
  $K\in \mathcal{T}^\ell$ with
  Gauss quadrature, with $p_K+1$ points in each coordinate direction, and use an
  increased number of points for the elements abutting the singularity. The
  precise number of quadrature points in the latter elements was found not to
  influence the estimated errors. 
  We estimate errors by comparing all solutions of a given problem with one reference
  numerical solution obtained at higher refinement.
  All computations are done with double precision floating point numbers.
\subsection{Two dimensional case}
\label{sec:num-nonlin-2d}
\begin{figure}
  \centering
  \begin{subfigure}[b]{.5\textwidth}
  \centering
  \includegraphics[width=.5\textwidth]{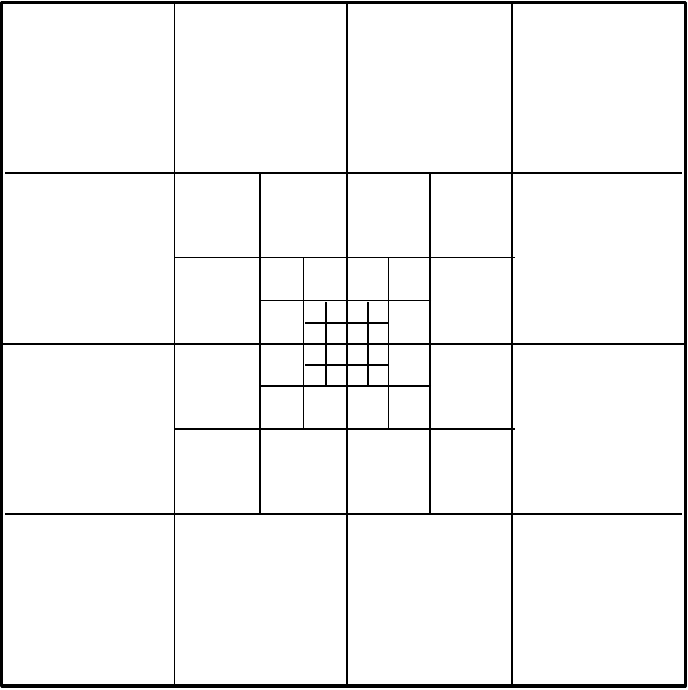}
  \caption{} \label{subfig:2d-mesh}
  \end{subfigure}%
  \begin{subfigure}[b]{.5\textwidth}
  \centering
  \includegraphics[width=.9\textwidth]{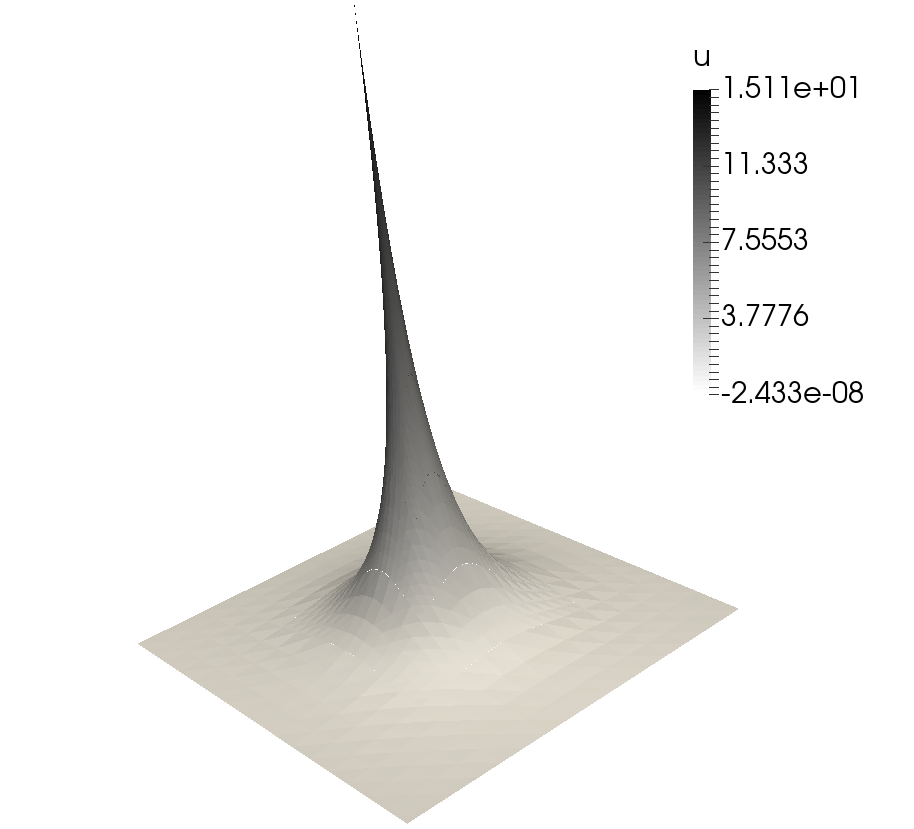}
  \caption{} \label{subfig:2d-nonlin-sol}
  \end{subfigure}
  \caption{Left: mesh for the two dimensional approximation at a fixed
    refinement step. Right: Numerical solution to \eqref{eq:nonlin-num} with $V(x) =
   - r^{-3/2}$.}
\label{fig:2d-nonlin-sol}
\end{figure}

\begin{figure}
    \centering
    \begin{subfigure}{.5\textwidth}
    \includegraphics[width=\textwidth]{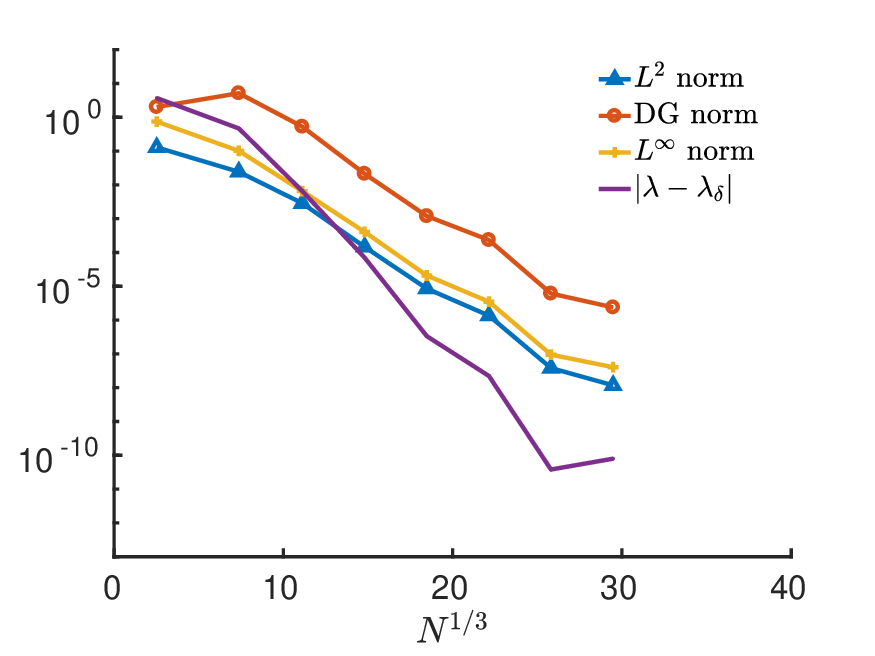}
    \caption{}\label{subfig:2d-nonlin-p050-s012_k200}
    \end{subfigure}%
    \begin{subfigure}{.5\textwidth}
    \includegraphics[width=\textwidth]{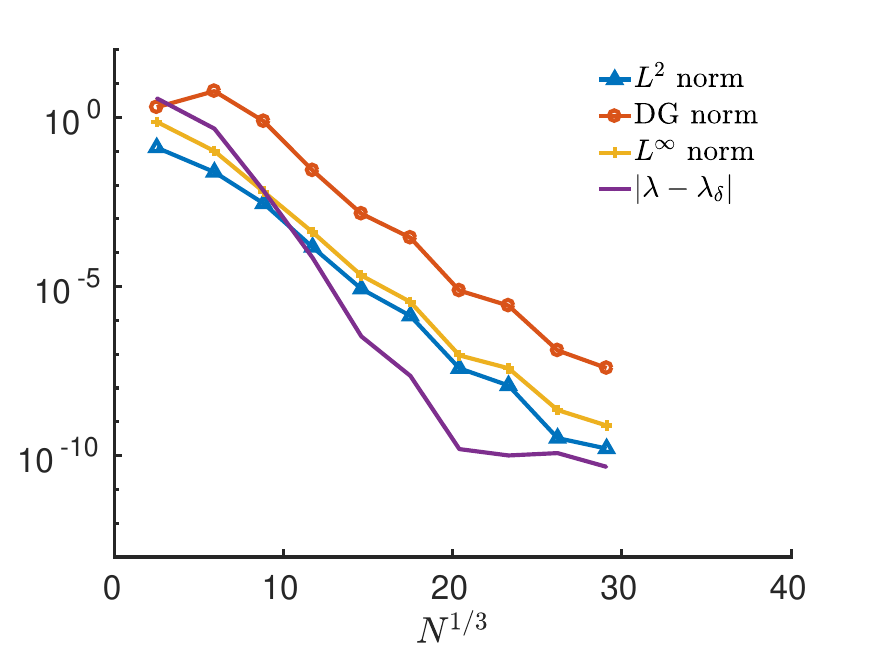}
    \caption{}\label{subfig:2d-nonlin-p050-s025_k200}
    \end{subfigure}\\
    \begin{subfigure}{.5\textwidth}
    \includegraphics[width=\textwidth]{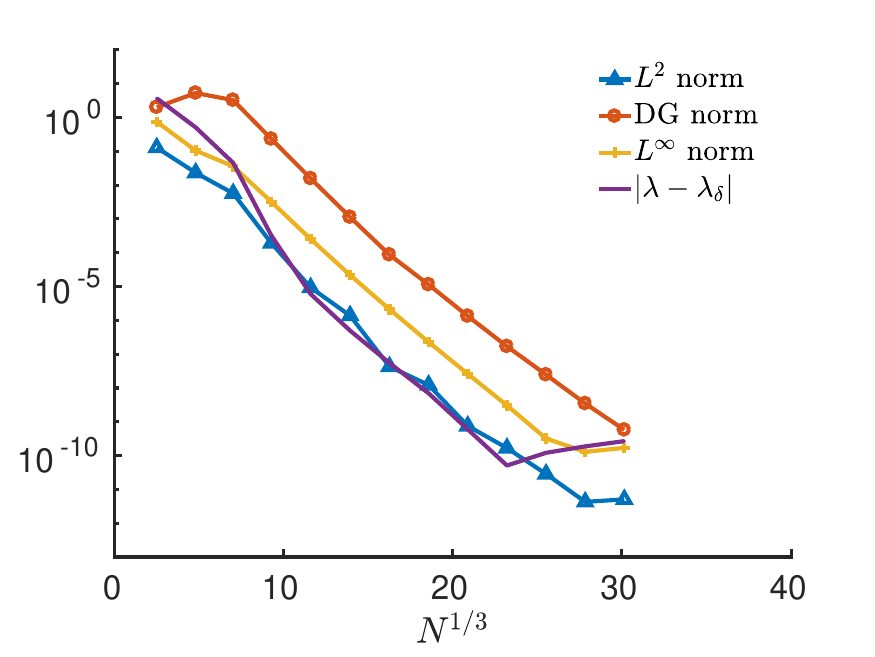}
    \caption{}\label{subfig:2d-nonlin-p050-s050_k200}
    \end{subfigure}
    \caption{Estimated errors for the numerical solution with potential $V(x) = -r^{-1/2}$.
    Polynomial slope: $\slope = 1/8$ in Figure \subref{subfig:2d-nonlin-p050-s012_k200};
    $\slope = 1/4$ in Figure \subref{subfig:2d-nonlin-p050-s025_k200} and $\slope= 1/2$ in
  Figure \subref{subfig:2d-nonlin-p050-s050_k200}.}\label{fig:2d-nonlin-p050_k200}
\end{figure}

\begin{table}
\caption{Estimated coefficients. Potential: $-r^{-1/2}$.}
\label{table:2d-nonlin-p050-b}
\centering
\pgfplotstablevertcat{\output}{tables/nonlinear_results_pot-0_50_slope0_12_k2_00_b} 
\pgfplotstablevertcat{\output}{tables/nonlinear_results_pot-0_50_slope0_25_k2_00_b} 
\pgfplotstablevertcat{\output}{tables/nonlinear_results_pot-0_50_slope0_50_k2_00_b} 
\pgfplotstabletypeset {\output}
\end{table}
\begin{figure}
    \centering
    \begin{subfigure}{.5\textwidth}
    \includegraphics[width=\textwidth]{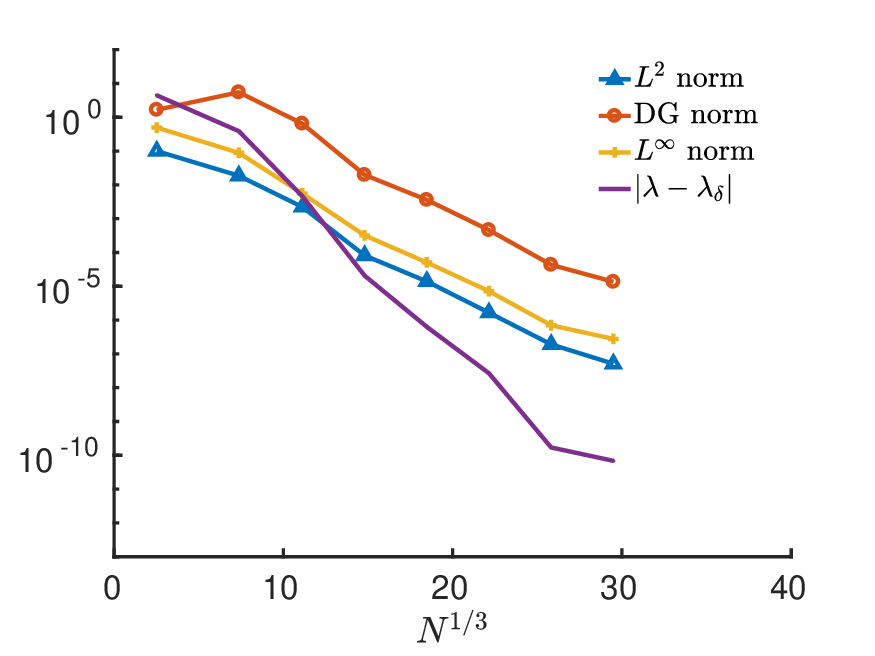}
    \caption{}\label{subfig:2d-nonlin-p100-s012_k200}
    \end{subfigure}%
    \begin{subfigure}{.5\textwidth}
    \includegraphics[width=\textwidth]{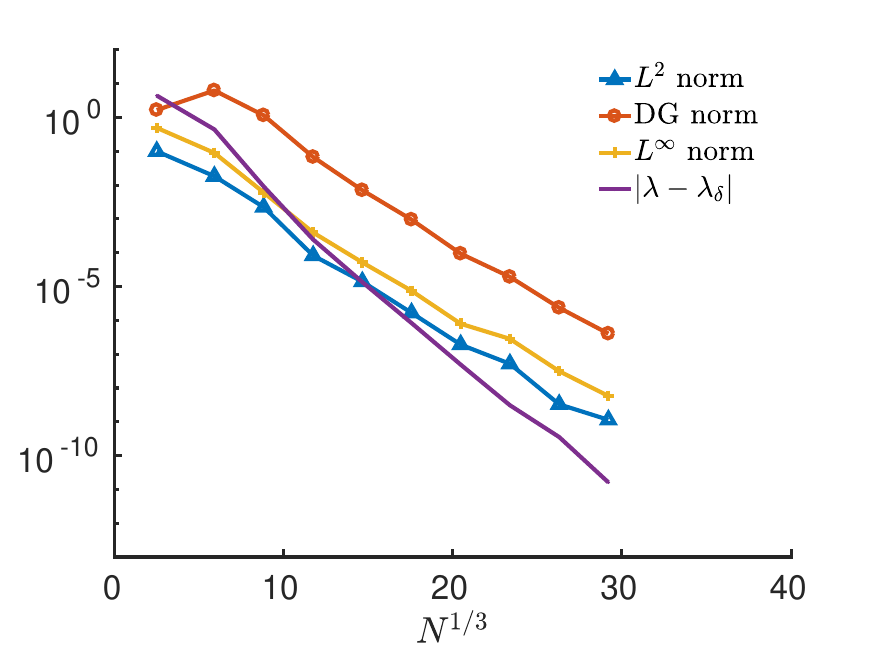}
    \caption{}\label{subfig:2d-nonlin-p100-s025_k200}
    \end{subfigure}\\
    \begin{subfigure}{.5\textwidth}
    \includegraphics[width=\textwidth]{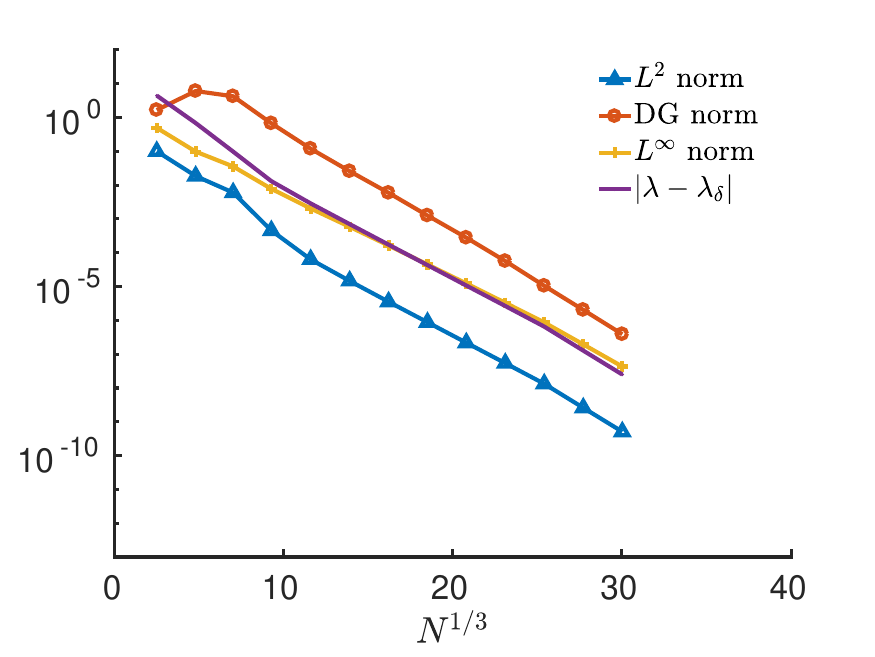}
    \caption{}\label{subfig:2d-nonlin-p100-s050_k200}
    \end{subfigure}
    \caption{Estimated errors for the numerical solution with potential $V(x) = -r^{-1}$.
    Polynomial slope: $\slope = 1/8$ in Figure \subref{subfig:2d-nonlin-p100-s012_k200};
    $\slope = 1/4$ in Figure \subref{subfig:2d-nonlin-p100-s025_k200} and $\slope= 1/2$ in
  Figure \subref{subfig:2d-nonlin-p100-s050_k200}.}\label{fig:2d-nonlin-p100_k200}
\end{figure}

\begin{table}
\caption{Estimated coefficients. Potential: $-r^{-1}$.}
\label{table:2d-nonlin-p100-b}
\centering
\pgfplotstablevertcat{\output}{tables/nonlinear_results_pot-1_00_slope0_12_k2_00_b} 
\pgfplotstablevertcat{\output}{tables/nonlinear_results_pot-1_00_slope0_25_k2_00_b} 
\pgfplotstablevertcat{\output}{tables/nonlinear_results_pot-1_00_slope0_50_k2_00_b} 
\pgfplotstabletypeset {\output}
\end{table}

\begin{figure}
    \centering
    \begin{subfigure}{.5\textwidth}
    \includegraphics[width=\textwidth]{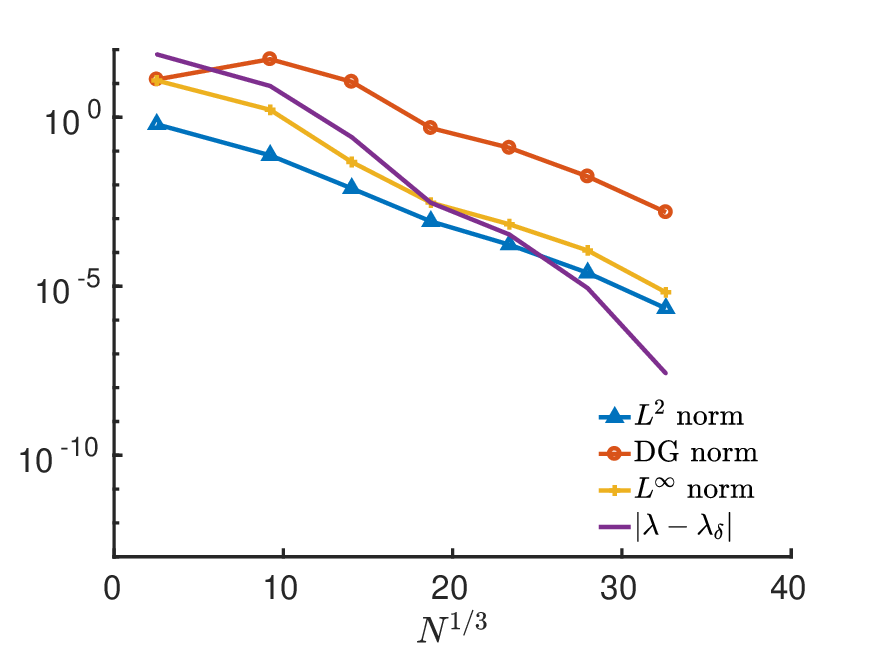}
    \caption{}\label{subfig:2d-nonlin-p150-s006_k200}
    \end{subfigure}%
    \begin{subfigure}{.5\textwidth}
    \includegraphics[width=\textwidth]{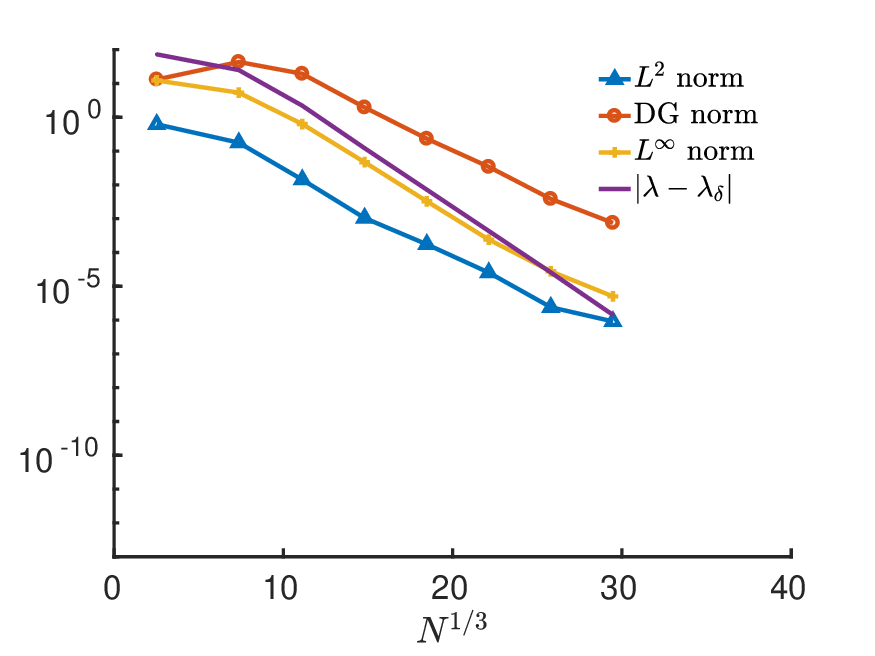}
    \caption{}\label{subfig:2d-nonlin-p150-s012_k200}
    \end{subfigure}\\
    \begin{subfigure}{.5\textwidth}
    \includegraphics[width=\textwidth]{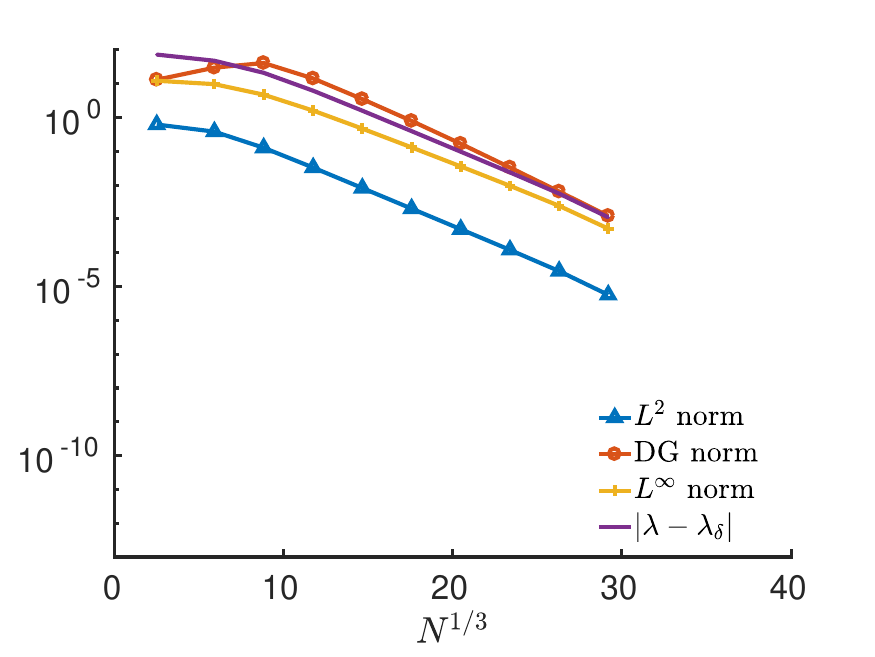}
    \caption{}\label{subfig:2d-nonlin-p150-s025_k200}
    \end{subfigure}
    \caption{Estimated errors for the numerical solution with potential $V(x) = -r^{-3/2}$.
    Polynomial slope: $\slope = 1/16$ in Figure \subref{subfig:2d-nonlin-p150-s006_k200};
    $\slope = 1/8$ in Figure \subref{subfig:2d-nonlin-p150-s012_k200} and $\slope= 1/4$ in
  Figure \subref{subfig:2d-nonlin-p150-s025_k200}.}\label{fig:2d-nonlin-p150_k200}
\end{figure}
\begin{table}
\caption{Estimated coefficients. Potential: $-r^{-3/2}$.}
\centering
\pgfplotstablevertcat{\output}{tables/nonlinear_results_pot-1_50_slope0_06_k2_00_b} 
\pgfplotstablevertcat{\output}{tables/nonlinear_results_pot-1_50_slope0_12_k2_00_b} 
\pgfplotstablevertcat{\output}{tables/nonlinear_results_pot-1_50_slope0_25_k2_00_b} 
\pgfplotstabletypeset {\output}
\label{table:2d-nonlin-p150-b}
\end{table}
In the two dimensional case, we compute the numerical solutions on meshes built
with refinement ratio $\sigma=1/2$, see Figure \ref{subfig:2d-mesh}. A visualization of the
solution (in the most singular problem we analyse) is given in Figure \ref{subfig:2d-nonlin-sol}.

Writing $V(x) = -r^{-\alpha}$, we plot the curves of the errors in Figures 
\ref{fig:2d-nonlin-p050_k200} ($\alpha=1/2$), \ref{fig:2d-nonlin-p100_k200}
($\alpha=1$), and \ref{fig:2d-nonlin-p150_k200} ($\alpha=3/2$).
In the case of the approximations with
low polynomial slopes, all errors converge exponentially in the number of
refinement steps, with the eigenvalue error converging faster than the norms of
the eigenfunction error. Estimated errors tend to reach a plateau at values around
$10^{-10}$. We conjecture this to be due to algebraic error, as the matrices
resulting from the discretization are very ill conditioned for high levels of
refinement \cite{Antonietti2011}. A comprehensive study of this effect and of the efficient
preconditioning of this problem is out of the scope of the present work.
When the polynomial slopes are higher, the quadrature
error --- not analyzed here, see Ref.~\refcite{Cances2010} for the analysis for
  $h$-type FE ---
 manifests itself more strongly and causes, in extreme cases, the total loss of
the doubling of the convergence rate.

The coefficients $b_X$, for $X= L^2(\Omega), \mathrm{DG}, L^\infty(\Omega)$ and
$\lambda$ are shown in Tables \ref{table:2d-nonlin-p050-b} to
\ref{table:2d-nonlin-p150-b}. As already discussed in Ref.~\refcite{linear}, the higher the slope, the
biggest the quadrature error and the furthest the estimated coefficients
$b_\lambda$ is from the double of the one for the $\mathrm{DG}$ norm.
\subsubsection{Corner refinement}
  Since we impose Dirichlet boundary conditions, singularities in the solution
  can emerge at the corner of the domain. To verify that those potential (milder)
  singularities do not influence, up to the precision considered, the
  convergence of the numerical scheme, we perform a comparison between the
  results with the mesh in Figure \ref{subfig:2d-mesh} and a mesh with
  refinement towards the corners of the domain (see Figure
  \ref{subfig:2d-mesh-corner}). The results are shown in Figure
  \ref{fig:2d-corner} for $\slope = 1/2$ and $V(x) = |x|^{-1}$. We compute the
  errors at the same levels of refinement for corner-refined and non
  corner-refined meshes and denote by $N_\mathrm{cor}$ the number of degrees of
  freedom of the \hp{} space on the corner-refined mesh and by $N$ the number of
  degrees of freedom of the space on the mesh without corner refinement.
  At a given level of refinement, the errors are
  approximately the same, but since $N_{\mathrm{cor}}\simeq 2 N$ the
  approximation with corner refined mesh is less efficient. This could be related
  to the superconvergence proved in Ref. \refcite{Bernardi1991}.
  Other choices of slopes and
  potentials give similar results. 
  \begin{figure}
    \centering
    \begin{subfigure}{\textwidth}
      \centering
      \includegraphics[width=.3\textwidth]{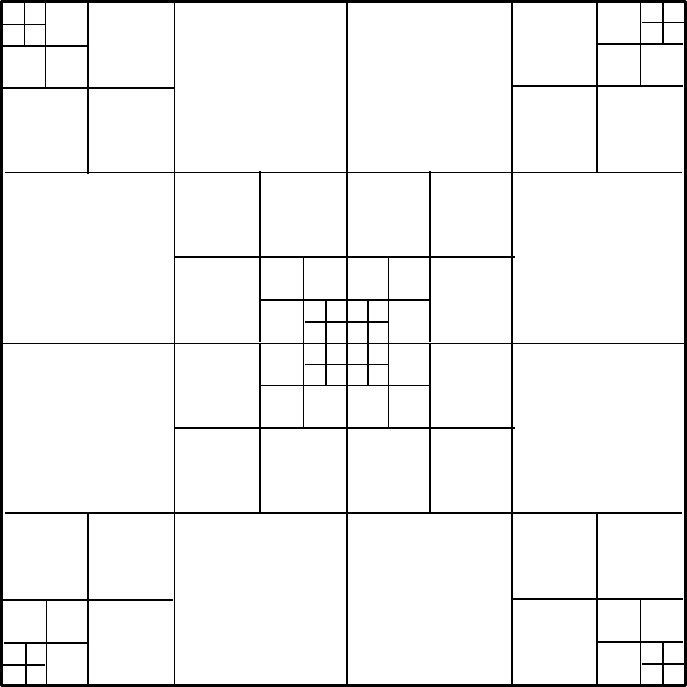}
      \caption{}
      \label{subfig:2d-mesh-corner}
    \end{subfigure}\\
    \begin{subfigure}{.5\textwidth}
      \includegraphics[width=\textwidth]{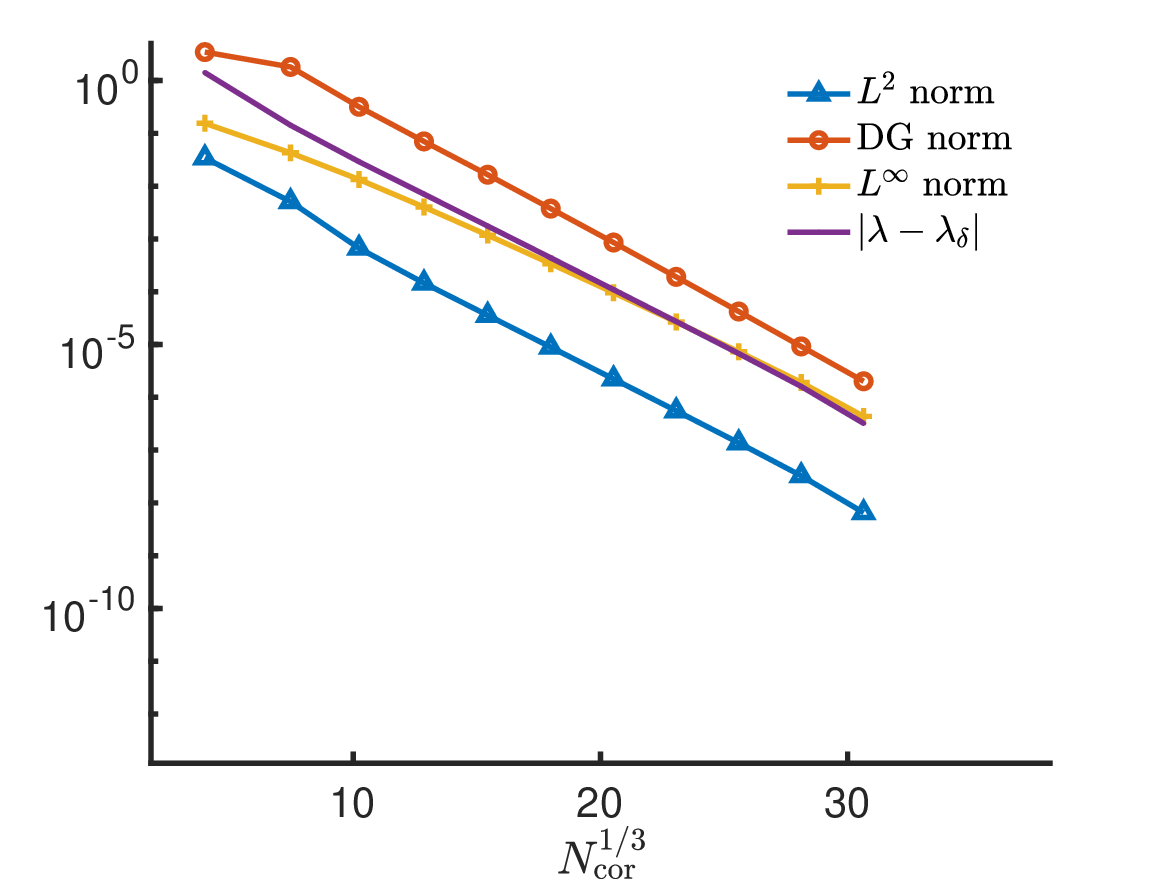}
      \caption{}
      \label{subfig:2d-corner-ref}
    \end{subfigure}%
    \begin{subfigure}{.5\textwidth}
      \includegraphics[width=\textwidth]{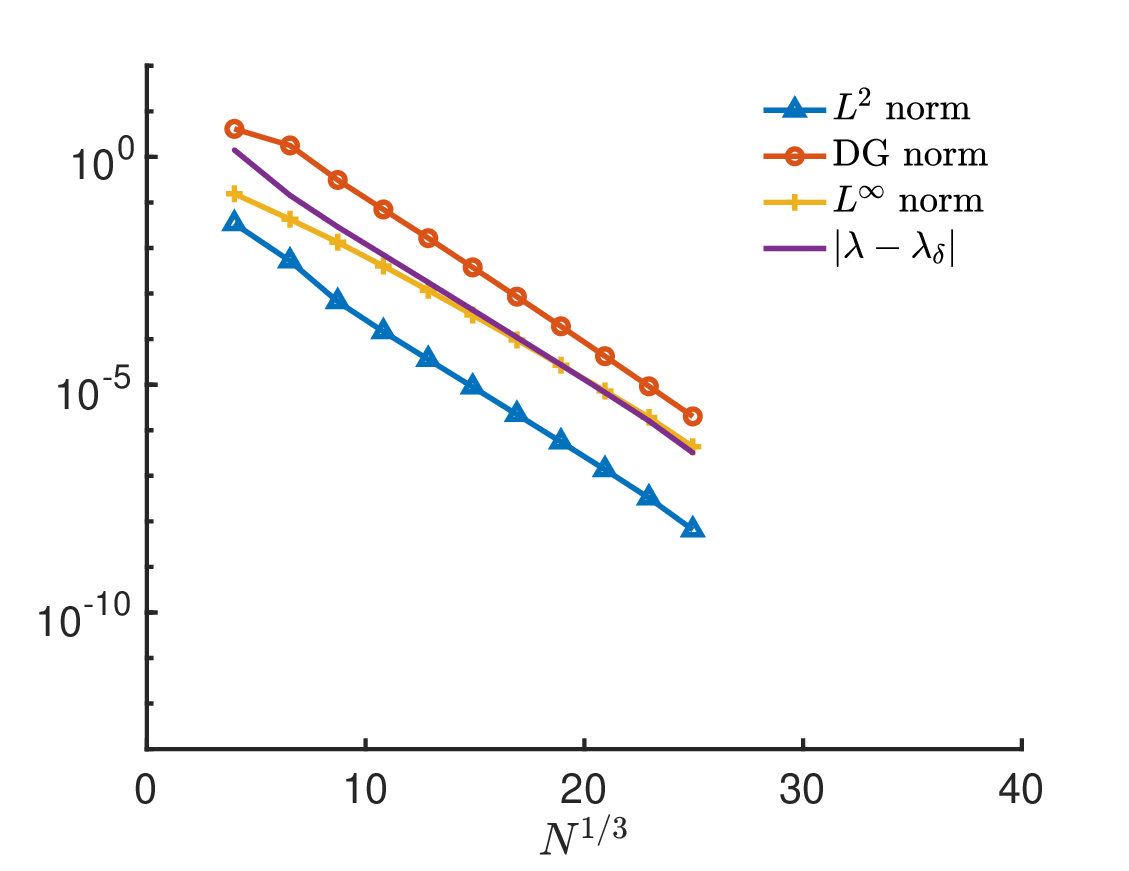}
      \caption{}
      \label{subfig:2d-corner-noref}
    \end{subfigure}
    \caption{Top: mesh with corner refinement. Bottom: comparison of results with (left) and
      without (right) corner refinement.}
    \label{fig:2d-corner}
  \end{figure}
\subsection{Three dimensional problem}
\begin{figure}[h!]
  \centering
  \begin{subfigure}[t]{.5\textwidth}
  \centering
  \includegraphics[width=\textwidth]{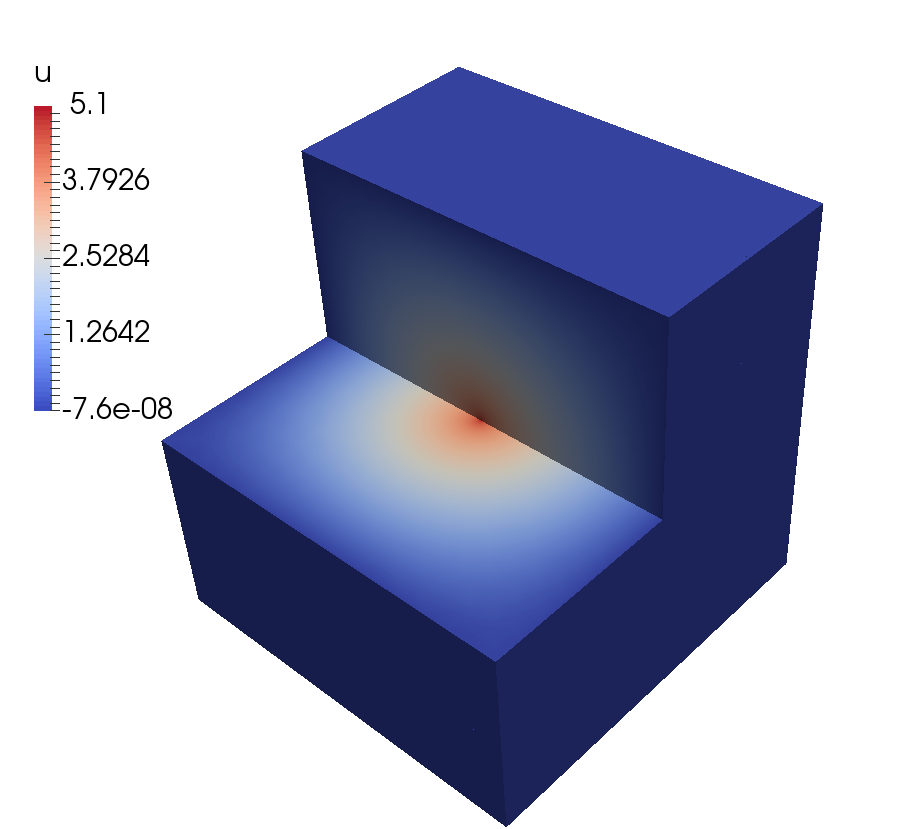}
  \end{subfigure}%
  \begin{subfigure}[t]{.5\textwidth}
  \centering
  \includegraphics[width=.7\textwidth]{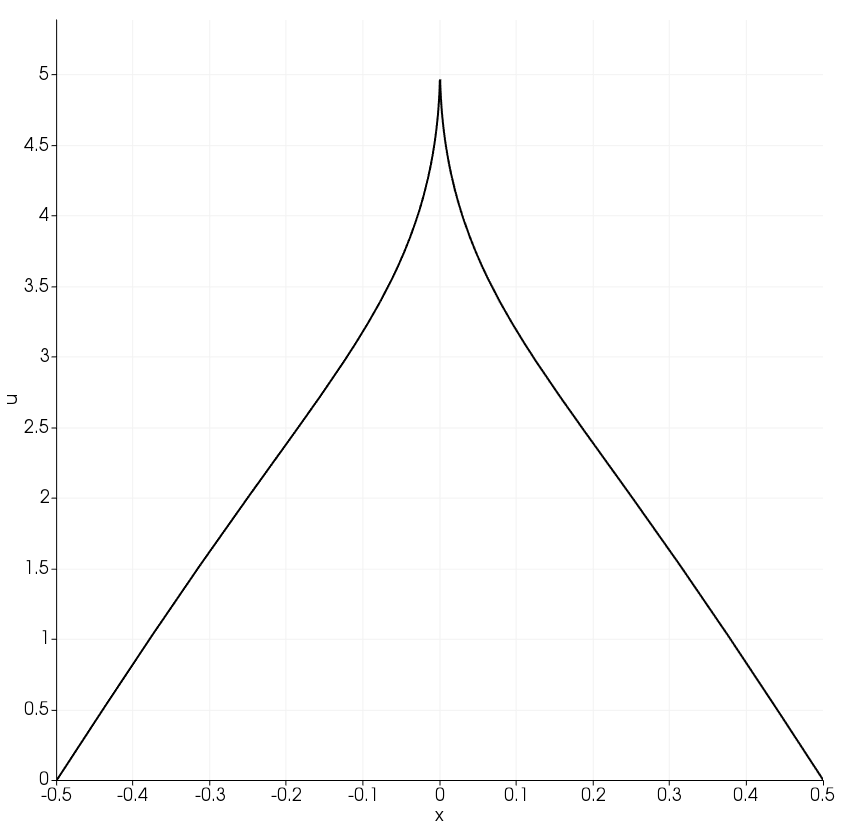}
  \end{subfigure}
  \caption{Numerical solution in the three dimensional case: solution in the
    cube, left, and close up near the origin of the restriction to the line $\{y= z= 0\}$, right}
  \label{fig:3d-nonlin-sol}
\end{figure}

\begin{figure}
    \centering
    \begin{subfigure}{.5\textwidth}
    \includegraphics[width=\textwidth]{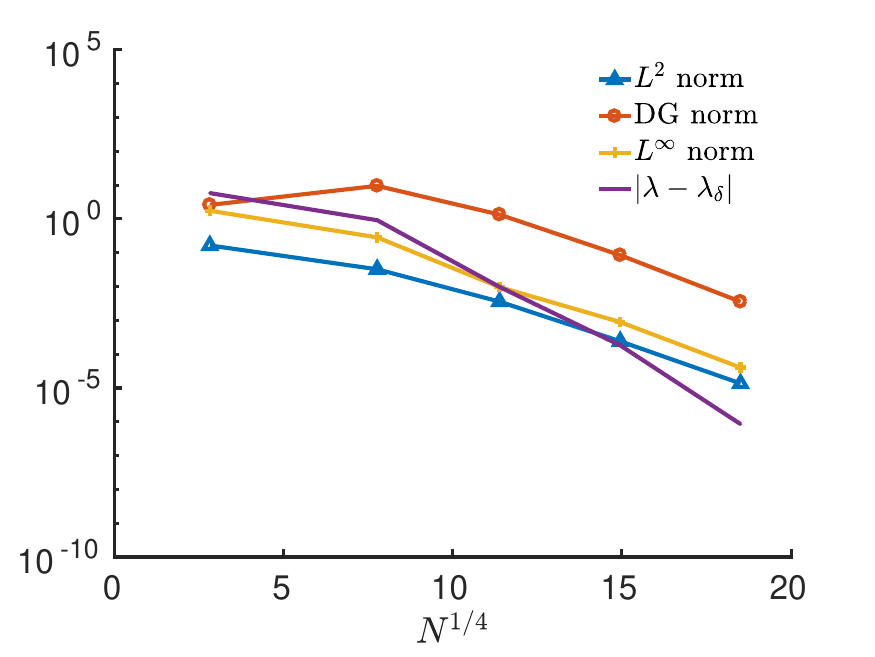}
    \end{subfigure}%
    \begin{subfigure}{.5\textwidth}
    \includegraphics[width=\textwidth]{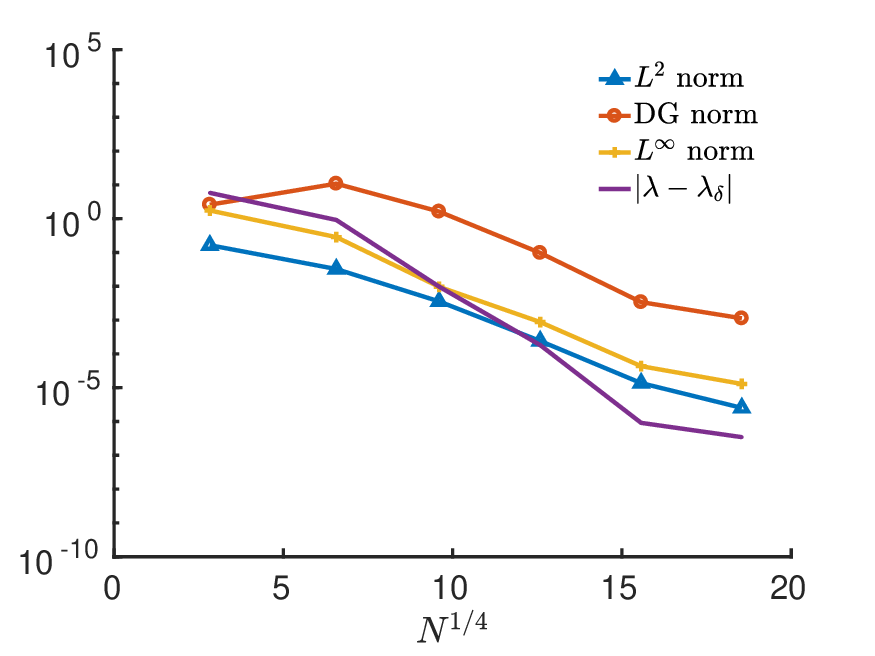}
  \end{subfigure}
  \caption{Estimated errors of the numerical solution for $V(x) = r^{-1/2}$. Polynomial
    slope $\slope = 1/8$, left and $\slope = 1/4$, right.}
  \label{fig:3d-nonlin-p050}
\end{figure}
\begin{figure}
    \centering
    \begin{subfigure}{.5\textwidth}
    \includegraphics[width=\textwidth]{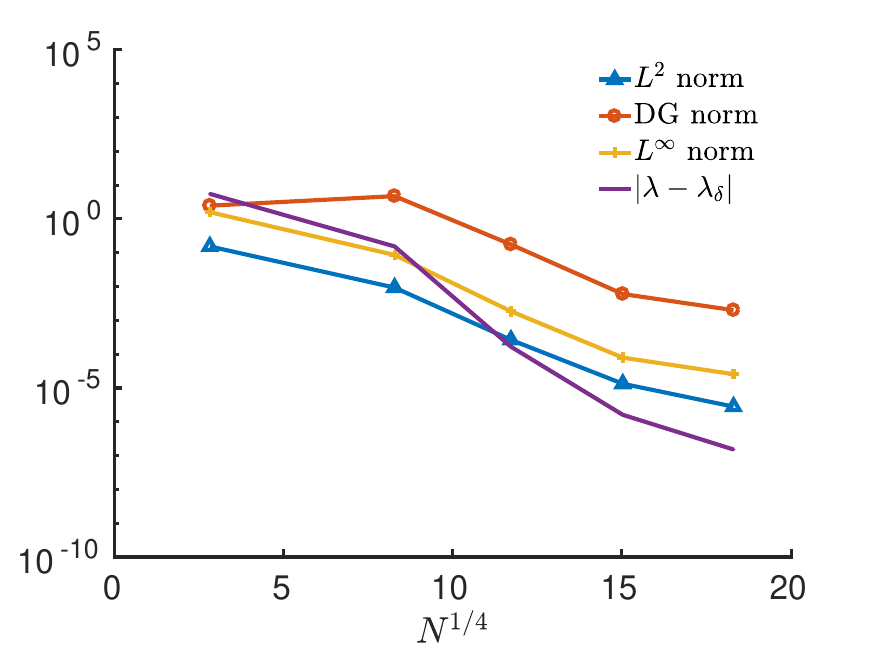}
    \end{subfigure}%
    \begin{subfigure}{.5\textwidth}
    \includegraphics[width=\textwidth]{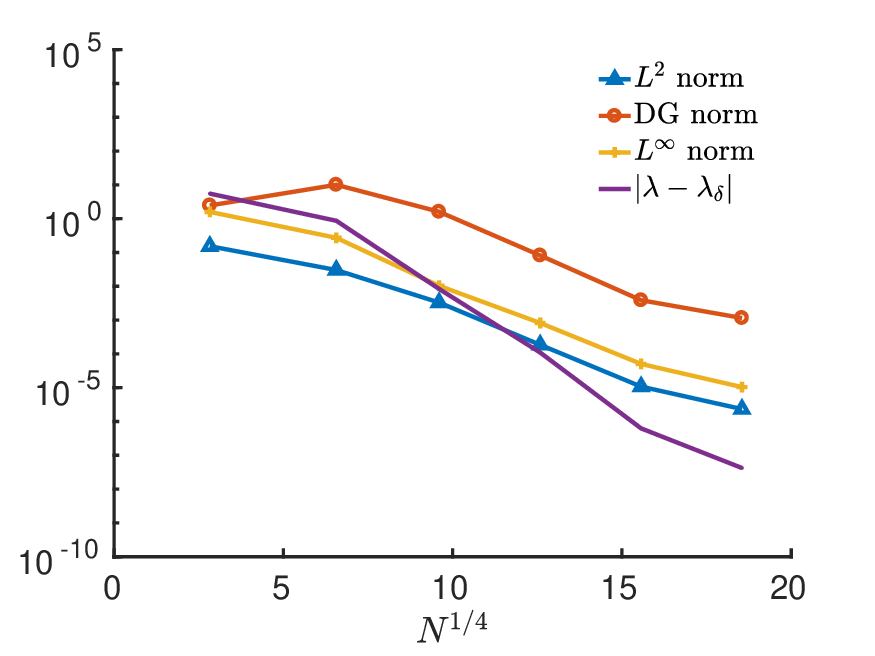}
  \end{subfigure}
    \caption{Estimated errors of the numerical solution for $V(x) = r^{-1}$. Polynomial
    slope $\slope = 1/8$, left and $\slope = 1/4$, right.}
  \label{fig:3d-nonlin-p100}
\end{figure}
\begin{figure}
    \centering
    \begin{subfigure}{.5\textwidth}
    \includegraphics[width=\textwidth]{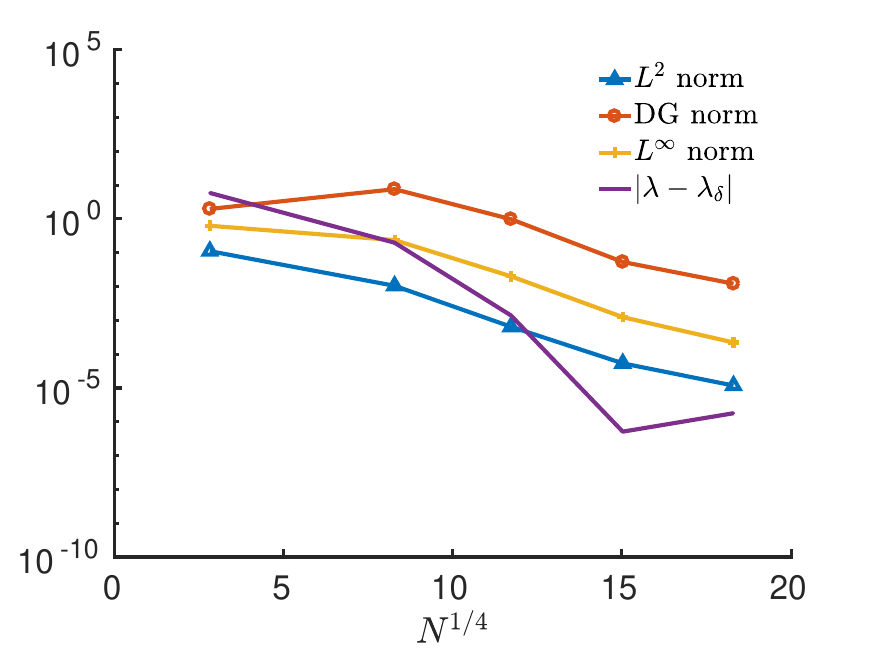}
    \end{subfigure}%
    \begin{subfigure}{.5\textwidth}
    \includegraphics[width=\textwidth]{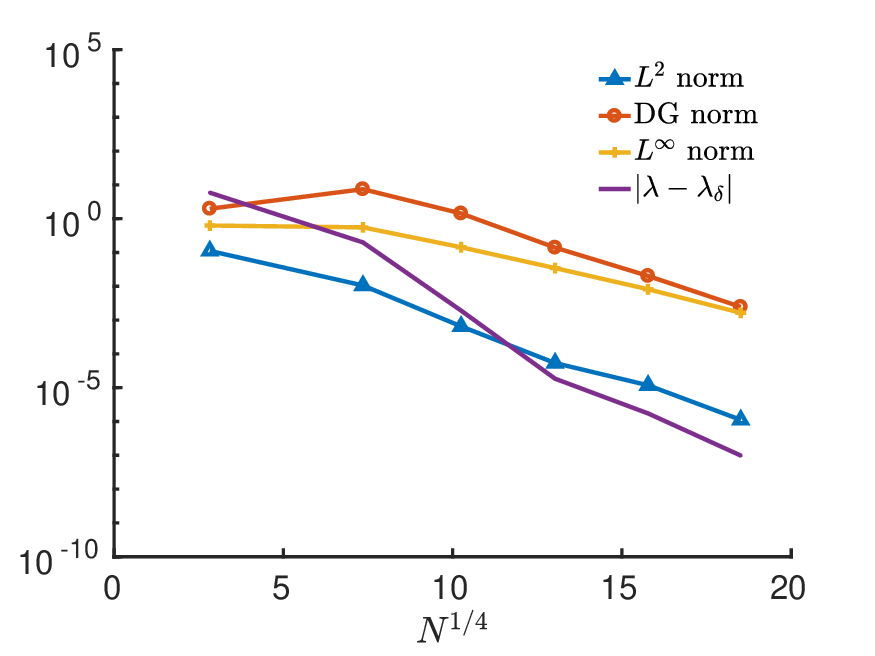}
  \end{subfigure}
    \caption{Estimated errors of the numerical solution for $V(x) = r^{-3/2}$. Polynomial
    slope $\slope = 1/8$, left and $\slope = 1/4$, right.}
  \label{fig:3d-nonlin-p150}
\end{figure}

\begin{table}
\caption{Estimated coefficients. Potential: $- r^{-1/2}$.}
  \label{table:3d-nonlin-p050}
\centering
\pgfplotstablevertcat{\output}{tables/nonlinear3d_results_pot-0_50_slope0_12_k2_00_b} 
\pgfplotstablevertcat{\output}{tables/nonlinear3d_results_pot-0_50_slope0_25_k2_00_b} 
\pgfplotstabletypeset {\output}
\end{table}
\begin{table}
\caption{Estimated coefficients. Potential: $ - r^{-1}$.}
  \label{table:3d-nonlin-p100}
\centering
\pgfplotstablevertcat{\output}{tables/nonlinear3d_results_pot-1_00_slope0_12_k2_00_b} 
\pgfplotstablevertcat{\output}{tables/nonlinear3d_results_pot-1_00_slope0_25_k2_00_b} 
\pgfplotstabletypeset {\output}
\end{table}
\begin{table}
\caption{Estimated coefficients. Potential: $ - r^{-3/2}$.}
  \label{table:3d-nonlin-p150}
\centering
\pgfplotstablevertcat{\output}{tables/nonlinear3d_results_pot-1_50_slope0_12_k2_00_b} 
\pgfplotstablevertcat{\output}{tables/nonlinear3d_results_pot-1_50_slope0_25_k2_00_b} 
\pgfplotstabletypeset {\output}
\end{table}
In the three dimensional setting, we consider the domain $(-1/2, 1/2)^3$, and a mesh
exemplified with refinement ratio $\sigma = 1/2$.
The numerical solution of the problem with $V(x) = r^{-3/2}$ is shown in Figure \ref{fig:3d-nonlin-sol}. The solution shown is obtained at
one of the highest degrees of refinement.
The algebraic
eigenproblem solver uses the Jacobi-Davidson method \cite{Sleijpen1996}, with a biconjugate
gradient method \cite{VanderVorst1992,Sleijpen1994} as the linear algebraic system solver. The fixed point nonlinear
iteration are set to a tolerance $\epsilon_{\mathrm{tol}} = 10^{-7}$.

The algebraic and quadrature errors are not as evident as in the two
dimensional case, and it can clearly be seen that an optimal slope can be chosen
to better approximate the eigenvalue.

As in the two dimensional case, we expect that corner and edge
  singularities do not influence the convergence of the method. We do not
  investigate this further.

The presence of the nonlinearity does not seem to
influence the rate of convergence with respect to the rate obtained for
  the $hp$ dG approximation of
  linear eigenvalue problem with singular potentials in
  Ref.~\refcite{linear}. This is expected, as the regularity of the solution of
  the linear problem considered in Ref.~\refcite{linear} and the regularity of
  the solution of the problem considered in this section is the same.
\section*{Acknowledgements}
The authors are grateful to the referee for their valuable and constructive comments, which have contributed to the improvement of the paper.

\FloatBarrier
\bibliographystyle{ws-m3as}
\bibliography{library}

\end{document}